\titleformat{\section}[hang]%
{\bfseries\large}{\thesection.}{1ex}{}%
\titleformat{\subsection}[hang]%
{\bfseries}{\thesubsection}{1ex}{}%
\title{Differential bundles and fibrations \\ for tangent categories}
\author{J.R.B. Cockett\thanks{Research supported by an NSERC Discovery grant and the Centre International de Recontres Math\'{e}matiques.} 
        \ and G.S.H. Cruttwell\thanks{Research supported by an NSERC Discovery grant.}
\\ Department of Computer Science, \\
University of Calgary, Alberta, Canada and \\
Department of Mathematics and Computer Science, \\ 
Mount Allison University, Sackville, Canada.}
\newtheorem{observation}{Remark}[section]
\newtheorem{lemma}[observation]{Lemma}  
\newtheorem{theorem}[observation]{Theorem}
\newtheorem{definition}[observation]{Definition}
\newtheorem{example}[observation]{Example}
\newtheorem{remark}[observation]{Remark}
\newtheorem{proposition}[observation]{Proposition} 
\newtheorem{corollary}[observation]{Corollary} 
\newcommand{\proof}{\noindent{\sc Proof:}\xspace}
\def\endproof{~\hfill$\Box$\vskip 10pt}
\newcommand{\x}{\times}
\newcommand{\<}{\langle}
\renewcommand{\>}{\rangle}
\newcommand{\cd}[1]{{\bf [CD.{#1}]}}
\newcommand{\B}{\ensuremath{\mathbb B}\xspace}
\newcommand{\C}{\ensuremath{\mathbb C}\xspace}
\newcommand{\E}{\ensuremath{\mathbb E}\xspace}
\newcommand{\N}{\ensuremath{\mathbb N}\xspace}
\newcommand{\T}{\ensuremath{\mathbb T}\xspace}
\newcommand{\X}{\ensuremath{\mathbb X}\xspace}
\newcommand{\Y}{\ensuremath{\mathbb Y}\xspace}
\newcommand{\op}{\mbox{\scriptsize op}}
\newcommand{\p}{\pi}
\newcommand{\mb}[1]{\mathbf{#1}}
\newcommand{\act}{\triangleright}
\newcommand\nats{\hbox{$I \kern - .38em N$}} 
\newcommand\ints{\hbox{$Z \kern - .65em Z$}} 
\newdimen\w@dth
\def\setw@dth#1#2{\setbox\z@\hbox{\scriptsize $#1$}\w@dth=\wd\z@
\setbox\@ne\hbox{\scriptsize $#2$}\ifnum\w@dth<\wd\@ne \w@dth=\wd\@ne \fi
\advance\w@dth by 1.2em}
\def\t@^#1_#2{\allowbreak\def\n@one{#1}\def\n@two{#2}\mathrel
{\setw@dth{#1}{#2}
\mathop{\hbox to \w@dth{\rightarrowfill}}\limits
\ifx\n@one\empty\else ^{\box\z@}\fi
\ifx\n@two\empty\else _{\box\@ne}\fi}}
\def\t@@^#1{\@ifnextchar_ {\t@^{#1}}{\t@^{#1}_{}}}
\def\t@left^#1_#2{\def\n@one{#1}\def\n@two{#2}\mathrel{\setw@dth{#1}{#2}
\mathop{\hbox to \w@dth{\leftarrowfill}}\limits
\ifx\n@one\empty\else ^{\box\z@}\fi
\ifx\n@two\empty\else _{\box\@ne}\fi}}
\def\t@@left^#1{\@ifnextchar_ {\t@left^{#1}}{\t@left^{#1}_{}}}
\def\two@^#1_#2{\def\n@one{#1}\def\n@two{#2}\mathrel{\setw@dth{#1}{#2}
\mathop{\vcenter{\hbox to \w@dth{\rightarrowfill}\kern-1.7ex
                 \hbox to \w@dth{\rightarrowfill}}%
       }\limits
\ifx\n@one\empty\else ^{\box\z@}\fi
\ifx\n@two\empty\else _{\box\@ne}\fi}}
\def\tw@@^#1{\@ifnextchar_ {\two@^{#1}}{\two@^{#1}_{}}}
\def\tofr@^#1_#2{\def\n@one{#1}\def\n@two{#2}\mathrel{\setw@dth{#1}{#2}
\mathop{\vcenter{\hbox to \w@dth{\rightarrowfill}\kern-1.7ex
                 \hbox to \w@dth{\leftarrowfill}}%
       }\limits
\ifx\n@one\empty\else ^{\box\z@}\fi
\ifx\n@two\empty\else _{\box\@ne}\fi}}
\def\t@fr@^#1{\@ifnextchar_ {\tofr@^{#1}}{\tofr@^{#1}_{}}}
\newdimen\W@dth
\def\setW@dth#1#2{\setbox\z@\hbox{$#1$}\W@dth=\wd\z@
\setbox\@ne\hbox{$#2$}\ifnum\W@dth<\wd\@ne \W@dth=\wd\@ne \fi
\advance\W@dth by 1.2em}
\def\T@^#1_#2{\allowbreak\def\N@one{#1}\def\N@two{#2}\mathrel
{\setW@dth{#1}{#2}
\mathop{\hbox to \W@dth{\rightarrowfill}}\limits
\ifx\N@one\empty\else ^{\box\z@}\fi
\ifx\N@two\empty\else _{\box\@ne}\fi}}
\def\T@@^#1{\@ifnextchar_ {\T@^{#1}}{\T@^{#1}_{}}}
\def\T@left^#1_#2{\def\N@one{#1}\def\N@two{#2}\mathrel{\setW@dth{#1}{#2}
\mathop{\hbox to \W@dth{\leftarrowfill}}\limits
\ifx\N@one\empty\else ^{\box\z@}\fi
\ifx\N@two\empty\else _{\box\@ne}\fi}}
\def\T@@left^#1{\@ifnextchar_ {\T@left^{#1}}{\T@left^{#1}_{}}}
\def\Tofr@^#1_#2{\def\N@one{#1}\def\N@two{#2}\mathrel{\setW@dth{#1}{#2}
\mathop{\vcenter{\hbox to \W@dth{\rightarrowfill}\kern-1.7ex
                 \hbox to \W@dth{\leftarrowfill}}%
       }\limits
\ifx\N@one\empty\else ^{\box\z@}\fi
\ifx\N@two\empty\else _{\box\@ne}\fi}}
\def\T@fr@^#1{\@ifnextchar_ {\Tofr@^{#1}}{\Tofr@^{#1}_{}}}
\def\Two@^#1_#2{\def\N@one{#1}\def\N@two{#2}\mathrel{\setW@dth{#1}{#2}
\mathop{\vcenter{\hbox to \W@dth{\rightarrowfill}\kern-1.7ex
                 \hbox to \W@dth{\rightarrowfill}}%
       }\limits
\ifx\N@one\empty\else ^{\box\z@}\fi
\ifx\N@two\empty\else _{\box\@ne}\fi}}
\def\Tw@@^#1{\@ifnextchar_ {\Two@^{#1}}{\Two@^{#1}_{}}}
\def\to{\@ifnextchar^ {\t@@}{\t@@^{}}}
\def\from{\@ifnextchar^ {\t@@left}{\t@@left^{}}}
\def\tofro{\@ifnextchar^ {\t@fr@}{\t@fr@^{}}}
\def\To{\@ifnextchar^ {\T@@}{\T@@^{}}}
\def\From{\@ifnextchar^ {\T@@left}{\T@@left^{}}}
\def\Two{\@ifnextchar^ {\Tw@@}{\Tw@@^{}}}
\def\Tofro{\@ifnextchar^ {\T@fr@}{\T@fr@^{}}}
\begin{document}
\maketitle
\begin{abstract}
Tangent categories are categories equipped with a tangent functor: an endofunctor with certain natural  transformations which make it behave like the tangent bundle functor on the category of smooth manifolds.  They provide an abstract setting for differential geometry by axiomatizing key aspects of the subject which allow the basic theory of these geometric settings to be captured.  Importantly, they have models not only in classical differential geometry and its extensions, but also in algebraic geometry, combinatorics, computer science, and physics.  

This paper develops the theory of ``differential bundles'' for such categories, considers their relation to ``differential objects'', and develops the theory of fibrations of tangent categories.  Differential bundles generalize the notion of smooth vector bundles in classical differential geometry.  However, the definition departs from the standard one in several significant ways: in general, there is no scalar multiplication in the fibres of these bundles, and in general these bundles need not be locally trivial.  

To understand how these differential bundles relate to differential objects, which are the generalization of vector spaces in smooth manifolds, requires some careful 
handling of the behaviour of pullbacks with respect to the tangent functor.   This is captured by ``transverse'' and ``display'' systems for 
tangent categories, which leads one into the fibrational theory of tangent categories.  A key example of a tangent fibration is provided by the ``display'' differential bundles 
of a tangent category with a display system. Strikingly, in such examples the fibres are Cartesian differential categories demonstrating a -- not unexpected -- tight 
connection between the theory of these categories and that of tangent categories.
\end{abstract}

\begin{minipage}{118mm}{\small

{\bf Keywords.} Tangent categories, generalized differential geometry, Cartesian differential categories, synthetic differential geometry, vector bundles, fibrations.   \\
{\bf Mathematics Subject Classification (2010).} 18D99, 18D30, 51K10, 55R65.
}\end{minipage}






\tableofcontents 


\section{Introduction}

Tangent categories provide an axiomatic setting for abstract differential geometry.  They were first introduced in \cite{rosicky} as a category equipped with a ``tangent functor'' which associated to each object an Abelian group bundle with additional structure.  It was shown in that paper that tangent categories encompass both standard differential geometry settings \cite{chern-chen-lam}, algebraic geometry settings, and settings arising in synthetic differential geometry \cite{kock}.  

In \cite{sman3}, the present authors slightly generalized this notion so that the bundles were only assumed to be commutative monoids.  This allowed for a key source of new examples of tangent categories arising from computer science and combinatorics.  In computer science, the resource $\lambda$-calculus \cite{curien} and the differential $\lambda$-calculus \cite{diffLambda}  were developed in parallel.  They were, eventually, unified \cite{manzonetto} as being calculii with their semantics in Cartesian differential categories \cite{cartDiff}.  In combinatorics the differential of a combinatorial species \cite{bergeron} is also an important tool.  This idea was developed more abstractly into a differential of polynomial functors which were then connected to the differential of datatypes (see \cite{polyFunctors}, \cite{abbott-thesis}, and \cite{abbott}).     These provide examples of settings in which there is a notion of differentiation but in which negation has no natural meaning.  An important aspect of this paper is to spell out in detail the connection between Cartesian differential categories, which provide a unifying framework for the settings above, and tangent categories.  As we shall see, one aspect of this connection hinges on the notion of a differential bundle.

A fundamental structure in differential geometry is the (smooth) vector bundle.  Vector bundles are important in differential geometry because they algebraically capture the notion of local coordinate systems for manifolds.  They thus provide a way to describe additional structure on manifolds such as vector fields, symplectic forms, and differential forms. The tangent bundle of a manifold is, of course, itself a vector bundle, but one can also form the product (or the ``Whitney sum'') of two vector bundles and their tensor product. Significantly, one can pullback vector bundles along smooth maps allowing the transport of this local structure.  One can also apply the tangent bundle functor to a vector bundle to produce another vector bundle.   

In the abstract setting of a tangent category, it is not immediately obvious how to define vector bundles as, in particular, there is no assumption of any sort of  ``object of real numbers'' from which one can define vector spaces with a scalar multiplication.  However, quoting from the Wikipedia entry on vector bundles\footnote{As of Nov. 2nd, 2016.}, ``smooth vector bundles have a very important property not shared by more general fibre bundles.  Namely, the tangent space $T_v(E_x)$ at any $v \in E_x$ can be naturally identified with the fibre $E_x$ itself.  This identification is obtained through the vertical lift...''.  It is this key structure which we use as the \emph{definition} of the generalization of smooth vector bundle to arbitrary tangent categories.  That is, rather than ask that each fibre of a map $q: E \to M$ be a vector space (smoothly), we ask that $q$ be an additive bundle with, in addition, a ``lift'' map
\[ \lambda: E \to T(E) \]
which enjoys certain properties so that ``the tangent space $T_v(E_x)$ at any $v \in E_x$ can be naturally identified with the fibre $E_x$ itself''.  Because at this level of generality there is no scalar multiplication -- and in order to emphasize the connections we establish in this paper -- we call such bundles {\em differential bundles\/}.  

Differential bundles, so defined, enjoy all of the key properties of ordinary smooth vector bundles.  Namely that the tangent bundle is a differential bundle (example \ref{basicdiffbundles}), applying the tangent bundle functor to a differential bundle produces another differential bundle (corollary \ref{lemmaTDiffBundle}), and the pullback of a  differential bundle along any map is again a differential bundle (lemma \ref{pullbackDiffBundle}).  Moreover, we show that the obvious maps between such bundles, namely those that preserve the ``lift'' operation, suitably generalize ordinary linear maps between vector spaces.  This observation thus gives an alternative perspective on the meaning of ``linearity'': linearity can be seen as the preservation of the lift map, rather than the preservation of an action\footnote{In Synthetic Differential Geometry linear maps are often referred to as {\em homogeneous\/} maps: intuitively they preserve the multiplicative ``action'' of the infinitesimal object on differential bundles. In tangent categories this ``action'' manifests itself in a dual form as a functorial ``coaction'' of the tangent bundle functor on the bundle -- provided by the lift map.} by real number objects.  

With the basic definition of differential bundles and their properties, one can look at connections on such bundles, a topic which will be treated in a future paper by the authors.    

However, there is more to say about these differential bundles as objects of interest in their own right.   The authors' previous paper on tangent categories \cite{sman3} defined the analog of vector spaces in a tangent 
category, calling these objects differential objects.  Just as vector bundles are vector spaces in a slice category, it is then natural to ask whether differential bundles are differential objects in a slice tangent category.  

Here, though, lies a difficulty which is at the heart of much of the rest of this paper.  In \cite{rosicky} Rosick\'y had proposed that the slice of a tangent 
category should again be a tangent category with respect to the ``vertical'' tangent bundle.  Furthermore, he had suggested that, for this to be so, it sufficed 
that the construction of the vertical bundle (given by pulling back over the zero of the tangent bundle) needed to exist and be preserved by the tangent functor.   
Of course, to be a tangent category there are other pullbacks which need to be present.  In particular, the pullback of the projection from the
tangent bundle functor along itself -- called here, and in the authors' previous paper, $T_2$ -- needs to be present and preserved.  While the condition 
Rosick\'y had suggested was clearly necessary, it did not allow -- as far as we could see -- a construction of this pullback.   This led us to  
the view that the question of what pullbacks should exist (and be preserved), was more subtle than had been supposed and deserved 
careful treatment.

In a tangent category it is certainly not the case that arbitrary pullbacks exist let alone that the tangent functor preserves those pullbacks.  Recall, for 
example, even in classical differential geometry the lack of pullbacks has given rise to a detailed theory -- the theory of transverse maps and 
submersions --  of when such pullbacks exist and are preserved.   For tangent categories, therefore, it should be no surprise that a commensurate 
theory needs to be developed.  Thus, in particular, for differential bundles to become differential objects in the slice, it should be expected that  some
-- somewhat subtle -- conditions on the pullbacks may arise.    

To deal with these issues in tangent categories it is necessary to have explicit structural descriptions of which pullbacks must exist and be preserved.  Toward this end we introduce two notions: {\em transverse systems} and {\em display systems}.  A transverse system on a category specifies a class of pullbacks in the category subject to a small set of axioms.  Every tangent category comes equipped with a minimal transverse system which comprises the pullbacks mandated by the tangent structure to exist and be preserved.  However, it is quite possible that a given tangent category have a larger transverse system: in the category of smooth manifolds, there is a natural transverse system, first defined in \cite{rene-thom}, consisting of all pullbacks of a pair of maps which are transverse (in the sense of differential geometry) to each other. In addition to this, it is also often useful to specify a class of maps in the category along which all pullbacks exist and are in the transverse system.  We call these \emph{display} maps, as they are closely related to the display systems introduced by Paul Taylor \cite{paulTaylorBook} in the study of fibrations.  A tangent category with a compatible display system is called a {\em display\/} tangent category.

With these ideas in hand, we can look at the subject of differential bundles and slice tangent categories from the prospective of fibrations.   A tangent fibration is then a fibration of tangent categories satisfying certain additional axioms.  A key result for these structures is that each fibre of a tangent fibration is again a tangent category (theorem \ref{tangent-substitution}).  Moreover, if we have a display tangent category $\X$, then the subcategory of the arrow category of $\X$ consisting of display maps is a tangent fibration.  Thus, in a display tangent category, not only is the category of display maps over a fixed object a tangent category, but also the differential objects in this slice tangent category are exactly differential bundles whose projections are display maps (\ref{displayedDiffBundleEquivalence}).   Another important example of a tangent fibration, which captures the idea of partial derivatives, is given in  \ref{tanFibrationExamples} (a).  

Bringing the two main ideas of this paper together, for a display tangent category we can consider the fibration of differential bundles (with projections display maps).  Each fibre of this fibration has the very special property that every object is a differential object in a canonical way: this makes each fiber a Cartesian differential category.  This then establishes the tight relationship between differential bundles in a tangent category and Cartesian differential categories.  

\medskip
\noindent
{\bf Organization}

\medskip
 In section \ref{secBasicTangent}, we recall the definition of a tangent category, introduce differential bundles in tangent categories, and study their properties.  In section \ref{diff-structure} we begin the study of how differential bundles relate to differential objects and Cartesian differential categories.   We recall that a Cartesian differential category is always a tangent category in which every object is a differential object.  However, for a  tangent category to be a Cartesian differential category not only must every object be a differential object but also there must be a global coherence between these structures.  The key concept in this section is, thus, the notion of a tangent category with coherent differential structure.  This idea becomes central when we later study the tangent category of differential bundles over a fixed base as these categories naturally have coherent differential structure.  In section \ref{secTransAndDisplay} we discuss the notions of transverse and display systems for arbitrary categories and for tangent categories.  In this section we also revisit the definitions of morphisms of tangent categories and differential bundles when the tangent categories have transverse and display systems and we show how some earlier results can be obtained more conceptually from these notions.  Finally, in section \ref{secTanFibrations}, we define and study fibrations between tangent categories, eventually concluding with the result that there is a tangent fibration of display differential bundles; in this fibration, every fibre is a tangent category with coherent differential structure and, thus, is a differential fibration.

\section{Basic tangent categories and differential bundles}\label{secBasicTangent}

We shall begin with the definition of tangent category as a category with tangent structure consisting of a functor, some natural transformations, and certain pullbacks.  The basic idea was introduced in \cite{rosicky}.  In \cite{sman3} those ideas were generalized slightly by allowing the  tangent bundle to be a commutative monoid rather than an Abelian group bundle.  This generalization was motivated by examples arising from computer science and combinatorics in which negation has no natural interpretation.  

\subsection{Tangent categories}

Throughout this paper, following \cite{cartDiff} and \cite{sman3}, we write composition in diagrammatic order, so that $f$, followed by $g$, is written as $fg$.  

If $M$ is an object in a category $\X$ an \textbf{additive bundle over $M$}, $q: E \to M$, consists of a map $q$ which admits finite pullback powers along itself 
$$\xymatrix{& E \ar[dr]^q \\ E_n \ar@{..>}[rd]_{\pi_{n-1}} \ar@{..>}[ru]^{\pi_0} & \vdots & M \\ & E \ar[ru]_q }$$ 
which is a commutative monoid in the slice category over $M$, $\X/M$.  In particular this means there is an addition operation, which we shall often write as $\sigma: E_2 \to E$ and must satisfy the usual 
requirements of commutativity and associativity, and a unit for this addition, which we shall often write as $\zeta:M \to E$.  A map between such bundles will, in general, just be a commutative square
$$\xymatrix{E \ar[rr]^e \ar[d]_q && E' \ar[d]^{q'} \\ B \ar[rr]_b && B'}$$
written $(e,b): q \to q'$.   If, in addition, such a map of bundles preserves the addition -- that is $e_2 \sigma' = \sigma e$ and $b \zeta' = \zeta e$ -- then we shall say that $(e,b)$ is an {\bf additive bundle morphism}.

\begin{definition}\label{defnTangentCategory}
For a category $\X$, tangent structure $\T = (T,p,0,+,\ell,c)$ on $\X$ consists of the following data:
\begin{itemize} 
	\item (\textbf{tangent functor})  a functor $T: \X \to \X$ with a natural transformation $p: T \to I_{\X}$ such that each 
	$p_M: T(M) \to M$ admits finite pullback powers along itself which are preserved by each $T^n$;
	\item (\textbf{additive bundle}) natural transformations $+:T_2 \to T$ (where $T_2$ is the pullback of $p$ over itself) and $0: I \to T$ making each $p_M: TM \to M$ an additive bundle;
	\item (\textbf{vertical lift}) a natural transformation $\ell:T \to T^2$ such that for each $M$
	$$(\ell_M,0_M): (p: TM \to M,+,0) \to (Tp: T^2M \to TM,T(+),T(0))$$ 
	is an additive bundle morphism;
        \item (\textbf{canonical flip}) a natural transformation $c:T^2 \to T^2$ such 
               that for each $M$
               $$(c_M,1): (Tp: T^2M \to TM, T(+),T(0)) \to (p_T: T^2M \to TM,+_T,0_T)$$ 
               is an additive bundle morphism;
        \item (\textbf{coherence of $\ell$ and $c$}) $c^2 = 1$ (so $c$ is an isomorphism), $\ell c = \ell$, and the following diagrams commute:
\[
\xymatrix{T \ar[r]^{\ell} \ar[d]_{\ell} &T^2 \ar[d]^{T(\ell)} \\
                 T^2 \ar[r]_{\ell_T} & T^3}
~~~
\xymatrix{T^3  \ar[r]^{T(c)} \ar[d]_{c_T} & T^3 \ar[r]^{c_T} & T^3 \ar[d]^{T(c)} \\
                  T^3 \ar[r]_{T(c)} & T^3 \ar[r]_{c_T} & T^3}
~~~
\xymatrix{T^2 \ar[d]_{c} \ar[r]^{\ell_T} & T^3 \ar[r]^{T(c)} & T^3 \ar[d]^{c_T} \\
                  T^2 \ar[rr]_{T(\ell)} & & T^3}
\]
	\item (\textbf{universality of vertical lift}) defining $v: T_2M \to T^2M$ by $v := \<\p_0\ell, \p_10_T\>T(+)$,
	the following diagram is a pullback\footnote{In \cite{sman3} this 
condition is given as the requirement that $v$ is the equalizer of $T(p)$ and $pp0$: this followed the approach in \cite{rosicky}. However, we 
now believe that the condition is more naturally expressed as a pullback and this view gives a smoother development of the theory.  
The equivalence to the equalizer requirement is given (in the more general context of differential bundles) in Lemma \ref{lemmaVerticalLift} below.} which 
is preserved by each $T^n$: 
$$\xymatrix{T_2(M) \ar[d]_{\pi_0p=\pi_1p} \ar[rr]^{v} &  & T^2(M) \ar[d]^{T(p)} \\  M \ar[rr]_{0} &  & T(M)}$$	
\end{itemize}
A category with tangent structure, $(\X,\T)$, is a {\bf tangent category}.   A tangent category is said to be {\bf Cartesian\/}  if it has finite products which are preserved by $T$. 
\end{definition}

The requirement that each $T^n$ preserve the pullback expressing the universality of the vertical lift is, in fact, a consequence of the other requirements.  This is because
the canonical flip can be used to transform a universality diagram acted on by $T^n$ back into a ``top-level'' universality diagram by flipping the maps up to the top level 
(see \cite{sman3} Lemma 2.15 for the preservation of the equalizer form of the condition).  Here we wish to emphasize the fact that $T^n$ should preserve these pullbacks in order 
to hint at a more general pattern.

One can think of the vertical lift as a comultiplication for the tangent functor and the flip as a symmetry transformation.  With this perspective $\ell c = \ell$ asserts that the comultiplication is 
commutative and $\ell T(\ell) = \ell \ell$ asserts that the comultiplication is coassociative.  One might expect that as the vertical lift acts as a comultiplication that the tangent functor should be a 
comonad, however, significantly, this is {\em not\/} the case.   It is the case, however, that $(T,\eta.\mu)$, where $\eta_m := 0_M: M \to T(M)$ and $\mu := \< p,T(p)\> +_M: T^2(M) \to T(M)$, 
is a monad (see \cite{sman3}, section 3.2).  

Tangent categories provide a strict generalization of traditional settings for differential geometry in a number of respects.  An immediate and striking difference is that in tangent categories the tangent bundles are assumed only to be commutative monoids -- that is they may lack negatives.  Many of the results of classical differential geometry require that one has negatives.  It should, therefore, be emphasized that, having negatives is a property (rather than structure) and {\em all\/} the basic results in this paper are, of course, true when one does have negation. Thus, importantly, this work {\em strictly\/} includes the settings of classical differential geometry.  Another important difference (in contrast to settings like synthetic differential geometry) is that the definition assumes no ``object of real numbers''.  

\begin{example} \label{tangent-category-examples}
{\em We briefly list several important examples of tangent categories:
\begin{enumerate}[{\em (i)}]
\item Finite dimensional smooth manifolds with the usual tangent bundle structure.  For tangent vectors $u$ and $v$ at a point $x$, the vertical lift is given by
	\[ \frac{d}{dt}|_{t = 0} (u + tv) \]
(see \cite{natural}, pg. 55).  For smooth manifolds it is well known that the pullback of two maps $f$ and $g$ exists and is preserved by the tangent functor whenever the maps are {\em transverse\/}.  This means that, at each point $f(x)=g(y)$, the tangent space is generated by image of tangent spaces under $T(f)$ and $T(g)$.   A {\em submersion\/} is a map $f$ which is surjective on the tangent spaces at each $f(x)$: these are clearly transverse to every map.  In particular, the projection $p_M: T(M) \to M$ is a submersion (eg.,  see \cite[7.1.e]{lee}) so that pullbacks along it exist and are preserved by the tangent functor.  
\item Cartesian differential categories \cite{cartDiff} are tangent categories, with $T(A) = A \times A$, $T(f) = \<Df,\pi_1f\>$, $p = \pi_1$ \cite{sman3}, and the vertical lift $\ell$ and canonical flip $c$ given by
	\[ \ell(u,x) = (u,0,0,x) ~\mbox{and}~ c(u,v,w,x) = (u,w,v,x). \]
In any Cartesian category one always has pullbacks along projections.  Furthermore,  the tangent bundle functor $T$ preserves products and projections as 
$$T(\pi_1) =   \<D\pi_1,\pi_1\pi_1\> =  \< \pi_0\pi_1,\pi_1\pi_1\> = \pi_1 \x \pi_1$$
and the latter is a projection up to equivalence.  Clearly, $T^n$ also preserves these pullbacks.

It should be noted that many important examples of Cartesian differential categories (particularly those arising from Computer Science) do not have negatives.   For specific examples, see \cite{diffCats}, section 2.5: by \cite{cartDiff} section 3.2, the coKleisli categories of each of these (``monoidal'') differential categories form Cartesian differential categories and hence also form tangent categories.  
\item  Any category is trivially a tangent category by setting the tangent functor to be the identity functor and $p$, $0$, $+$, $c$, and $\ell$ all to be the identity natural transformation.  The fact that every category is trivially a tangent category reminds one that tangent structure is certainly {\em structure\/}, rather than a property of the underlying category.  
\item The infinitesimally linear objects in any model of synthetic differential geometry \cite{kock} gives an example of {\em representable\/} tangent structure (see \cite{sman3}, section 5.2 for a characterization of when the tangent functor $T$ in a tangent category is representable).   If $D$ is the object of infinitesimals, then we take $TM = M^D$. The vertical lift is then given by exponentiating the multiplication map $D \times D \to D$.  In models of SDG all pullbacks exist and, as $(\_)^D$ is a right adjoint, it preserves all pullbacks. Thus, in these settings {\em every\/}  map has a pullback which is preserved by the tangent bundle functor and its powers.

\item The opposite of the category of finitely presented commutative rings (or more generally rigs) is another standard example of a category with representable 
tangent structure: here $D$ is the ``rig of infinitesimals'', $\N[\varepsilon] := \N[x]/(x^2=0)$.  
\item A source of examples, from \cite{rosicky}, uses the fact that if $(\X, \T)$ is a tangent category then the functors from $\X$ to $\textbf{set}$ which preserve both the wide pullbacks of $T^n(p)$, and the pullback from the universality of the lift with natural transformations as arrows forms a tangent category.    The tangent functor $T^*$ is then given by $T^{*}(F) := TF$.  In fact, this works for any category $\Y$ in place of $\textbf{set}$ and functors $\X \to \Y$ which preserve the required pullbacks.  This source of examples includes, for example, $C^\infty$-rings (see \cite{reyes} chapter 1) and more generally the product preserving functors from any Cartesian differential category.  Viewing a Cartesian differential category as a (generalized) many-sorted theory of differentiation, it is pleasing to know that its category of algebras (in any category with sufficient limits) is necessarily a tangent category.
\item The category of functors, ${\sf Cat}(\C,\X)$, from any category to a tangent category inherits the tangent structure of $\X$ pointwise.  Thus, for example the category of arrows in a tangent category, $\X^{\mathbf 2}$, is a tangent category with $T(A \to^f B) = T(A) \to^{T(f)} T(B)$.
\item Convenient manifolds with the kinematic tangent bundle (see \cite{convenient} section 28) form a tangent category, by combining the results of  \cite{convenient-diff} and section 6 of \cite{sman3}.   
\end{enumerate} }
\end{example}

As noted in some of the examples above, the fact that a given category can carry more than one tangent structure implies that being a tangent category is a structure on rather than a property of the category. 

\subsection{Differential bundles}

Vector bundles play an important role in differential geometry and this section defines the corresponding notion for tangent categories.  We call this notion
a {\em differential bundle\/}: it is an additive bundle with, in addition, a {\em lift map\/} satisfying  properties similar to those of the vertical lift for the tangent bundle itself.  The 
morphisms between these bundles will then just be commuting squares between the projections.  However, we will  identify an important subclass of morphisms, called 
the {\em linear\/} morphisms, which must in addition preserve the lift.  Notably, neither ordinary morphisms nor linear morphisms require that the additive structure be preserved.  However,
we shall show that linear maps between differential bundles automatically preserve addition.  

\begin{definition}\label{defnDiffBundles}
A \textbf{differential bundle} in a tangent category consists of an additive bundle on a map $q$ together with a lift map $\lambda$:
$${\sf q} := (q: E \to M, \sigma: E_2 \to E, \zeta: M \to E, \lambda: E \to T(E))$$ 
such that
\begin{itemize}
        \item Finite wide pullbacks of $q$ along itself exist and are preserved by each $T^n$.
        \item $(\lambda,0):(E, q,\sigma,\zeta) \to (T(E), T(q), T(\sigma), T(\zeta))$ is an additive bundle morphism.
	\item $(\lambda,\zeta):(E, q,\sigma,\zeta) \to (T(E), p,+,0)$ is an additive bundle morphism.
	\item The {\bf universality of lift}: 
	the following diagram is a pullback which is preserved by each $T^n$:
	       $$\xymatrix{E_2 \ar[d]_{\p_0q=\pi_1q} \ar[rr]^{\mu} &  & T(E) \ar[d]^{T(q)} \\ M \ar[rr]_{0} & & T(M)}$$
	       where $\mu := \<\p_0\lambda, \p_10\>T(\sigma): E_2 \to T(E)$,
	\item The equation $\lambda \ell = \lambda T(\lambda)$ holds.
\end{itemize}
A {\bf morphism of differential bundles} $(f,g): {\sf q} \to {\sf q'}$ is a pair of maps $f: E \to E'$, $g: M \to M'$ such that $fq' = qg$ (the first diagram below). A morphism of differential bundles is {\bf linear} in case, in addition, it preserves the lift, that is $f \lambda' = \lambda T(f)$ (the second diagram below):
$$\xymatrix{E \ar[d]_q \ar[rr]^f && E' \ar[d]^{q'} \\ M \ar[rr]_{g} && M'} ~~~~~~ 
 \xymatrix{E \ar[d]_\lambda \ar[rr]^f & & E' \ar[d]^{\lambda'} \\ T(E) \ar[rr]_{T(f)} && T(E')}$$
\end{definition}

As discussed in the introduction, differential bundles do not, in general, have a scalar multiplication, nor in general are they ``locally a product''.  For these two reasons we have avoided the term ``vector bundle''.  Instead the key structural property of a differential bundle is the lift which represents the idea that ``the tangent space $T_v(E_x)$ at any $v \in E_x$ can be naturally identified with the fibre $E_x$ itself''.   As we shall see shortly, this provides these bundles with differential structure which motivated the name  {\em differential bundles\/}.  

Clearly the differential bundles and morphisms of a tangent category $\X$ themselves form a category ${\sf DBun}(\X)$ which has an underlying functor 
$$P: {\sf DBun}(\X) \to \X; \begin{array}[c]{c} \xymatrix{ M \ar[r]^q \ar[d]_f & B \ar[d]^b \\ M' \ar[r]_{q'} & B'} \end{array} \mapsto \begin{array}[c]{c} \xymatrix{B \ar[d]^b \\  B'} \end{array}$$
whose fibre over an object $B$ consists of all the bundles with base $B$ with maps which fix the base.  One may hope for $P$ to be some sort of ``tangent fibration'' and, indeed, one of the main objectives of this paper is to make this idea precise.

Some basic examples of differential bundles are:

\begin{example}~ \label{basicdiffbundles}
{\em 
\begin{enumerate}[(i)]
\item Every object has associated with it a ``trivial'' differential bundle ${\sf 1}_M =(1_M,1_M,1_M,0_M)$.  Any differential bundle over $M$ has 
a unique bundle map to this bundle, $(q,1_M): {\sf q} \to {\sf 1}_M$, which is the identity on the base:
$$\xymatrix{E \ar[d]_q \ar[rr]^q & & M \ar[d]^{1_M} \\ M \ar[rr]_{1_M} && M}$$
Furthermore this is a linear map as $\lambda T(q) = q 0$.  Clearly ${\sf 1}_M$ is the final differential bundle in the 
fibre over $M$.  Given any $f: N \to M$ this can also be viewed as a linear morphism between the trivial bundles $(f,f): {\sf 1}_N \to {\sf 1}_M$.
Furthermore, there is always a linear zero bundle morphism $(\zeta,1): {\sf 1}_M \to {\sf q}$.
\item The tangent bundle of each object $M$, ${\sf p}_M = (p: T(M) \to M,+,0,\ell)$, is clearly a differential bundle and by naturality of $\ell$, any map $f: N \to M$ induces a linear map $(T(f),f): {\sf p}_N \to {\sf p}_M$ between these differential bundles.
\item A vector bundle, as defined in \cite{chern-chen-lam}, consists of a smooth map between smooth manifolds $q: E \to B$ which, for each $b \in B$, has a neighbourhood $U_b$  so that $q|_{U_b}$ is smoothly equivalent to the trivial $\pi_0: U_b \x V \to U_b$ for some fixed vector space $V$.   Of course, this does {\em not\/} mean that the whole bundle is trivial (in the sense that $E \simeq B \x  V$): the locally trivial bundles of a vector bundle can glue together in a manner which incorporates twists which give important information about the properties of the space $B$.   

Any vector bundle in smooth manifolds is a differential bundle: see \cite{natural}, pg. 55 for the requisite structure.  A differential bundle in smooth manifolds, however, is not necessarily a vector bundle for a rather simple reason: 
if $B$ is not connected a differential bundle allows different components to have fibres which are vector spaces of different dimensions.   In fact, a differential bundle in smooth manifolds is a vector bundle precisely when all of these fibres happen to 
have the same dimension: see corollary 31 of \cite{submersionsAndBundles}.  
\item In synthetic differential geometry (SDG) the preferred notion of a ``vector bundle'' is a Euclidean module in $\X/M$ (for example, see \cite{bungeConnections}, \cite{kockConnections}, and \cite{lavendhomme}).    Theorem \ref{diffBundlesInSDG}, below, will show that a Euclidean module in a model of SDG is precisely the same as a differential object.  Proposition \ref{diffObjeqdiffBun} will show that in a Cartesian tangent category, a differential object is the same as a differential bundle over the final object.  Finally, in Section \ref{diff-fibrations} we will show that, when the tangent functor preserves sufficient limits, differential bundles over a base $M$ are the same as differential bundles over the final object in the slice category $\X/M$.  In models of SDG, the tangent functor preserves all limits as it is given by exponentiation.     Thus, in a model of SDG, Euclidean modules in $\X/M$ are the same as differential bundles over $M$.   
\end{enumerate}}

\end{example}


\subsection{Two constructions of differential bundles}\label{secConstDiffBundles}

While the above examples show that differential bundles arise frequently in the standard examples, we will like to also show that differential bundles abound in an arbitrary tangent category.    Towards this end we describe two fundamental ways of constructing new differential bundles from existing differential bundles.   

If ${\sf q} = (q,\sigma,\zeta,\lambda)$ is a differential bundle, then define 
$$T({\sf q}):= (T(q),T(\sigma),T(\zeta),T(\lambda)c).$$

\begin{lemma}\label{lemmaTDiffBundle}
If ${\sf q}$ is a differential bundle then so is $T({\sf q})$.
\end{lemma}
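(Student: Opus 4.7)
The plan is to verify the five axioms of Definition \ref{defnDiffBundles} in turn for the candidate data $T({\sf q}) = (T(q), T(\sigma), T(\zeta), T(\lambda)c)$, relying throughout on the coherences of the tangent structure and, especially, on the behaviour of the canonical flip $c$ as a ``translator'' between the two monoid structures on $T^2(M)$.

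First I would handle the pullback axiom: finite wide pullbacks of $T(q)$ along itself exist and are preserved by each $T^n$. This is automatic because $T^n(T(q)) = T^{n+1}(q)$, and $T^{n+1}$ preserves the pullback powers of $q$ by hypothesis on ${\sf q}$; so each pullback power $(TE)_k$ is canonically $T(E_k)$. Next, the two additive bundle morphism conditions. Applying $T$ to the statements that $(\lambda,0)$ and $(\lambda,\zeta)$ are additive bundle morphisms of ${\sf q}$ gives, respectively, that $(T(\lambda), T(0))$ and $(T(\lambda), T(\zeta))$ are additive bundle morphisms landing in the additive bundle $(T^2E, T^2(q), T^2(\sigma), T^2(\zeta))$ with its ``lower'' monoid structure $(T^2q, T(+), T(0))$. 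Postcomposing with $c_E$ then reroutes these into the ``upper'' structure $(p_T, +_T, 0_T)$, precisely because the tangent category axiom states that $(c_M,1)$ is an additive bundle morphism from $(T^2M, Tp, T(+), T(0))$ to $(T^2M, p_T, +_T, 0_T)$, so in particular $T(0_M)c_M = 0_{TM}$ and $T(+_M)c_M = (c_M \times_M c_M)\cdot {+_{TM}}$. Combining with naturality of $c$ along $\sigma$, $\zeta$, $q$ gives exactly the two bundle morphism conditions for $T({\sf q})$ with lift $T(\lambda)c$.

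For the universality of the lift, I would apply $T$ to the universality pullback for ${\sf q}$, which is preserved by $T$, giving a pullback with top map $T(\mu) = \langle\pi_0 T(\lambda), \pi_1 T(0_E)\rangle T^2(\sigma)$ and bottom map $T(0_M): TM \to T^2M$. Postcomposing the top-right corner with the isomorphism $c_E: T^2E \to T^2E$ and the bottom-right corner with $c_M$ produces an isomorphic pullback, whose bottom map becomes $T(0_M)c_M = 0_{TM}$ (by the coherence above) and whose top map, by naturality of $c$ along $\sigma$ and the identity $T(0_E)c_E = 0_{TE}$, is exactly the new $\mu' = \langle\pi_0 T(\lambda)c_E, \pi_1 0_{TE}\rangle T^2(\sigma)$. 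Preservation by each $T^n$ again uses that $T^n$ preserves the original universality pullback and that $c$ is a natural isomorphism.

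The final step, and the most delicate one, is the vertical-lift coherence $T(\lambda)c \cdot \ell_{TE} = T(\lambda)c \cdot T(T(\lambda)c)$. Applying $T$ to the given $\lambda \ell_E = \lambda T(\lambda)$ gives $T(\lambda)T(\ell_E) = T(\lambda)T^2(\lambda)$. The third coherence diagram of Definition~\ref{defnTangentCategory} at $E$ reads $c_E\, T(\ell_E) = \ell_{TE}\, T(c_E)\, c_{TE}$, and composing both sides on the right with $T(c_E)^{-1} = T(c_E)$ and using $c^2 = 1$ lets one rewrite $\ell_{TE}$ in terms of $c$, $T(\ell)$, and $T(c)$. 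Combining this with the $T$-image of the original coherence and with naturality of $c$ along $\lambda$ reduces both sides to the common expression $T(\lambda)\,c_E\, T^2(\lambda)\, T(c_E)$. I expect this last step to be the main obstacle, as it is a pure bookkeeping computation in the $T^3$-level coherence of the tangent structure; all the other axioms follow relatively directly from applying $T$ and inserting a single instance of $c$ at the right place.
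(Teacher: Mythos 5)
Your proposal is correct and follows essentially the same route as the paper's proof: the additive bundle morphism conditions are obtained by applying $T$ and postcomposing with the appropriate $(c,c)$ or $(c,1)$, the universality pullback is the $T$-image of the original one composed with the isomorphism $c$, and the lift coherence is exactly the paper's computation via the rearranged third $c$--$\ell$ coherence together with naturality of $c$ along $\lambda$ (the paper rewrites the right-hand side as $T(\lambda)c\,\ell$, while you meet in the middle at $T(\lambda)c\,T^2(\lambda)T(c)$ -- the same manipulation). No gaps.
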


\begin{proof}
While it is immediate that this is an additive bundle and that pullbacks of $T(q)$ along itself exist and are preserved by $T^n$,  
it is not immediate that $T(\lambda)c$ acts as a lift.

We start by checking 
$(T(\lambda)c,0_T)\!: T(q) \to T^2(q)$ and $(T(\lambda)c,T(\zeta))\!: T(q) \to p_{T(E)}$ 
are additive bundle 
morphisms.  For the first, as $(\lambda,0): q \to T(q)$ is an additive bundle morphism, applying $T$ we have $(T(\lambda),T(0))\!: T(q) \to T^2(q)$ 
is also.  Composing this with $(c,c): T^2(q) \to T^2(q)$ we obtain $(T(\lambda)c,0): T(q) \to T^2(q)$ as an additive bundle morphism.  
For the second, similarly we apply $T$ to obtain the additive bundle morphism   $(T(q),T(\zeta)): T(q) \to T(p_M)$ but this time compose with $(c,1)$ 
to obtain $(T(\lambda)c,T(\zeta)): T(q) \to p_{T(E)}$ as an additive bundle morphism.  

For the coherence of $T(\lambda)c$, we need to check the following commutes:
$$ \xymatrix{T(E) \ar[d]_{T(\lambda)c} \ar[r]^{T(\lambda)c} & T^2(E) \ar[d]^{\ell} \\ T^2(E) \ar[r]_{T(T(\lambda)c)} &T^3(E) }$$

First note that the last coherence diagram for tangent structure may be re-expressed as 
	\[ T(\ell) c T(c) = c \ell, \]
using the fact that $c^2 = 1$. This gives
\begin{eqnarray*}
T(\lambda)c T(T(\lambda)c) & = & T(\lambda T(\lambda)) c T(c) \\
& = & T(\lambda\ell) c T(c) \\
& = & T(\lambda) T(\ell) c T(c) \\
& = & T(\lambda) c \ell
\end{eqnarray*}
as required.  

For universality of lift we need to show that 
	$$\xymatrix{ T(E_2) \ar[d]_{\p_0T(q)}  \ar[rrr]^{\<\pi_0 T(\lambda)c,\pi_1 0\> T^2(\sigma)} & & & T^2(E) \ar[d]^{T^2(q)} \\
	                        T(M) \ar[rrr]_0 & & & T^2(M)}$$
is a pullback.  We may re-express this square as the outer square of 
	$$\xymatrix{ T(E_2) \ar[d]_{T(\p_0q)}  \ar[rrr]^{\<T(\pi_0 \lambda)c,T(\pi_1 0)\> T^2(\sigma)} & & & T^2(E) \ar[d]^{T^2(q)} \ar[r]^c &T_2(E) \ar[d]^{T^2(q)} \\
	                        T(M) \ar[rrr]_0 & & & T^2(M) \ar[r]_c & T^2(M)}$$
Since $T$ preserves the universality of $q$, the outer square is a pullback which is preserved by $T^n$, as required.  
\end{proof}

Observe also that there are some obvious linear morphisms of bundles associated with this construction:

\begin{corollary}
$(0_E,0_M): {\sf q} \to T({\sf q})$ and $(p_E,p_M): T({\sf q}) \to {\sf q}$ are linear bundle morphisms.
\end{corollary}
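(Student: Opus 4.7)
The plan is to verify, for each candidate morphism, the two conditions required of a linear bundle morphism: commutativity of the projection square and preservation of the lift. Commutativity of the projection squares is immediate from the fact that $0: I \Rightarrow T$ and $p: T \Rightarrow I$ are natural transformations. Indeed, naturality of $0$ at $q$ yields $0_E \cdot T(q) = q \cdot 0_M$, and naturality of $p$ at $q$ yields $T(q) \cdot p_M = p_E \cdot q$, which are the two underlying commuting squares. So the real work lies in checking lift preservation.

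For the lift condition on $(0_E, 0_M): {\sf q} \to T({\sf q})$, the target lift is $T(\lambda) c_E$, so I must show $0_E \cdot T(\lambda) c_E = \lambda \cdot T(0_E)$. The approach is first to apply naturality of $0$ at $\lambda: E \to T(E)$, which gives $0_E \cdot T(\lambda) = \lambda \cdot 0_{T(E)}$, and then to invoke the identity $0_{T(E)} \cdot c_E = T(0_E)$. This identity is precisely the unit half of the axiom that $(c_M, 1_{T(M)})$ is an additive bundle morphism from $(T(p_M), T(+_M), T(0_M))$ to $(p_{T(M)}, +_{T(M)}, 0_{T(M)})$, which reads $T(0_M) c_M = 0_{T(M)}$, combined with $c^2 = 1$.

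For the lift condition on $(p_E, p_M): T({\sf q}) \to {\sf q}$, the source lift is $T(\lambda) c_E$, so I must show $p_E \cdot \lambda = T(\lambda) c_E \cdot T(p_E)$. Naturality of $p$ at $\lambda: E \to T(E)$ gives $p_E \cdot \lambda = T(\lambda) \cdot p_{T(E)}$, and the required identity $p_{T(E)} = c_E \cdot T(p_E)$ is, again using $c^2 = 1$, the projection half of the same additive bundle morphism axiom for $(c_M, 1_{T(M)})$, namely $c_M \cdot p_{T(M)} = T(p_M)$.

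No step here is a genuine obstacle: both lift preservations reduce, via a single application of naturality and $c^2 = 1$, to one of the two halves (unit or projection) of the axiom that $(c, 1)$ is an additive bundle morphism. The only point requiring care is bookkeeping between $T(0)$ and $0_T$ (respectively $T(p)$ and $p_T$), and remembering that the lift on $T({\sf q})$ is the composite $T(\lambda) c_E$ rather than just $T(\lambda)$.
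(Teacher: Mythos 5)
Your proof is correct, and it supplies exactly the verification the paper leaves implicit: the paper states this as an immediate observation with no proof, and the intended argument is precisely yours, namely naturality of $0$ and $p$ for the commuting squares, and naturality at $\lambda$ combined with the additive-bundle-morphism axiom for $(c,1)$ (giving $0_T c = T(0)$ and $p_T = cT(p)$ via $c^2=1$) for lift preservation. Nothing is missing.
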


Another important way of constructing differential bundles is by pulling back.  The pullbacks involved, however, not only have to exist but they also must be preserved by $T^n$, neither of which is 
guaranteed in an arbitrary tangent category.  The question of which pullbacks exist and are preserved by $T^n$ is a topic to which we will return when we introduce the notion of transverse and display systems.  
In the meantime we shall deal with these matters in a rather ad hoc fashion in order to get this basic construction of differential bundles on the table early.  

If ${\mb q} = (q: E \to M,\zeta,\sigma,\lambda)$ is a differential bundle, and $f: X \to M$ is a map 
then to say the pullback of the bundle along $f$ exists and is preserved by $T^n$ means that the pullback along $f$ of all the wide pullbacks of $q$ along itself must also exist and be preserved by $T^n$.  
It is clear that pulling back ${\sf q}$ along $f$ under these assumptions will provide an additive bundle.  However, 
as the following cube (whose front and back faces are pullbacks) shows this additive bundle also has the data for being a differential bundle as there is a candidate for the lift, $f^{*}(\lambda)$, given by 
$$\xymatrix{ & T(f^{*}(E)) \ar[dd]|{\hole}^<<<<{T(f^{*}(q))} \ar[rr]^{T(f^{*}_E)} & & T(E) \ar[dd]^{T(q)} \\ 
           f^*(E) \ar[dd]_{f^{*}(q)}  \ar@{..>}[ur]^{f^{*}(\lambda)} \ar[rr]^>>>>>>>{f^{*}_E} & & E \ar[dd]^<<<<<<q \ar[ru]_{\lambda} \\
           & T(X) \ar[rr]_<<<<<<{T(f)}|>>>>>>>>>>>>{~~} & & T(M) \\
           X \ar[ur]^{0} \ar[rr]_{f} & & M \ar[ur]_{0}}$$
We shall suggestively call this structure $f^{*}({\sf q})$.  Notice that the cube also immediately shows two further useful facts.   First, as the right face is an additive bundle morphism, the left face must 
also be an additive bundle morphism.  Secondly, if $f^{*}({\sf q})$ is indeed a differential bundle, then there is a linear morphism $f^{*}_{\sf q} = (f^{*}_E,f): f^*({\sf q}) \to {\sf q}$.  
This will clearly be a Cartesian map sitting above $f$ for the functor $P: {\sf DBun}(\X) \to \X$.

We now confirm very concretely that $f^*({\sf q})$ is indeed a differential bundle.  Later, when we consider ``display'' tangent categories for which $\sf q$ is a ``display differential bundle'', we shall see a more conceptual proof of this result (remark \ref{remarkPullbackDiffBundles}).

\begin{lemma}\label{pullbackDiffBundle}
In any tangent category, when the pullback of ${\sf q}$ along $f$ exists and is preserved by $T^n$ (in the sense described above), $f^*({\sf q})$ is a differential bundle and the linear morphism $f^{*}_{\sf q}$ is a Cartesian 
morphism (in the fibrational sense) for the functor $P: {\sf DBun}(\X) \to \X$ sitting above $f$.
\end{lemma}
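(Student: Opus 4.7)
The plan is to construct $f^*(\lambda)$ from the universal property of the pullback $T(f^*(E))$ (which exists because $T$ preserves the defining pullback of $f^*(E)$), and then verify each remaining axiom of Definition \ref{defnDiffBundles} for $f^*({\sf q})$ by transferring the corresponding axiom from ${\sf q}$. The bulk of the work is a pullback pasting argument for the universality of lift. Write $h := f^*_E: f^*(E) \to E$ for the pullback projection throughout.

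By naturality of $0$, the pair $(h\lambda,\, f^*(q)\,0_X)$ forms a cone over the cospan $T(q), T(f)$, so it induces a unique $f^*(\lambda): f^*(E) \to T(f^*(E))$ with $f^*(\lambda)T(h) = h\lambda$ (this is the linearity of $f^*_{\sf q}$) and $f^*(\lambda)T(f^*(q)) = f^*(q)\,0_X$. The additive bundle morphism condition for $(f^*(\lambda),0_X)$ is already established in the text from the pullback cube, and the condition for $(f^*(\lambda), f^*(\zeta))$ into $p_{T(f^*(E))}$ is obtained in the same spirit: the equations to check (preservation of $\sigma$ and $\zeta$) agree after post-composition with $T(h)$ (by the corresponding equations for $\lambda$ in ${\sf q}$) and with $T(f^*(q))$ (by bundle axioms and naturality of $0$), so uniqueness in the pullback $T(f^*(E))$ forces them.

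The main obstacle is the universality of lift, namely that
\[ \xymatrix{f^*(E)_2 \ar[d]_{\pi_0 f^*(q)} \ar[rr]^{\mu'} && T(f^*(E)) \ar[d]^{T(f^*(q))} \\ X \ar[rr]_{0_X} && T(X)} \]
with $\mu' = \langle \pi_0 f^*(\lambda), \pi_1 0\rangle T(f^*(\sigma))$ is a pullback preserved by $T^n$. I would paste this square on the right with the $T^n$-preserved pullback $T(f^*(E)) = T(E)\times_{T(M)}T(X)$; the outer rectangle has top $\mu' T(h)$ and bottom $0_X T(f) = f\,0_M$. Using $f^*(\sigma) h = h_2 \sigma$ (where $h_2: f^*(E)_2 \to E_2$ is the evident induced map on wide pullbacks) together with $f^*(\lambda) T(h) = h\lambda$, a brief calculation with the two projections of $T(E)_2$ yields $\mu' T(h) = h_2 \mu$. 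The outer rectangle is therefore the pasting of the universality-of-lift pullback of ${\sf q}$ on the right with the pullback $f^*(E)_2 = E_2\times_M X$ on the left; both are pullbacks preserved by $T^n$ (by hypothesis on ${\sf q}$, and by the preservation hypothesis for the wide pullbacks of $q$ along $f$). The pullback pasting lemma then delivers the required pullback, and $T^n$ preservation follows by applying $T^n$ to each constituent pullback.

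The coherence $f^*(\lambda)\ell = f^*(\lambda)T(f^*(\lambda))$ is a uniqueness argument into the pullback $T^2(f^*(E))$: after post-composition with $T^2(h)$, naturality of $\ell$ reduces the equation to $h\lambda\ell = h\lambda T(\lambda)$, which is the coherence for $\lambda$; after post-composition with $T^2(f^*(q))$, both sides collapse to $f^*(q)\,0_X T(0_X)$ using the additive-bundle-morphism identity $0\,\ell = 0\,T(0)$. Finally, for the Cartesian property: given any bundle morphism $(e,g): {\sf q'}\to {\sf q}$ in ${\sf DBun}(\X)$ and any factorization $g = g' f$ in $\X$, the pair $(e, q' g')$ is a cone over $q, f$ because $eq = q'g = q' g' f$, and so induces a unique $e': E' \to f^*(E)$ with $e' h = e$ and $e' f^*(q) = q' g'$. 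Then $(e',g')$ is the required unique bundle morphism ${\sf q'}\to f^*({\sf q})$ whose composite with $(f^*_E, f)$ recovers $(e,g)$, establishing Cartesian-ness.
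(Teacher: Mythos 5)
Your proposal is correct and follows essentially the same route as the paper: $f^*(\lambda)$ is induced by the pullback defining $T(f^*(E))$, the additive and coherence axioms are checked by post-composing with the two projections $T(f^*_E)$ and $T(f^*(q))$ (resp. their $T^2$ versions), and the universality of the lift is obtained by pasting the universality square of ${\sf q}$ against the preserved pullbacks — your rectangle-cancellation argument is just the paper's cube read as two composable pastings. The only cosmetic differences are that you work directly with the formula for $\mu'$ and verify $\mu' T(f^*_E) = h_2\mu$ rather than inducing $f^*(\mu)$ and identifying it afterwards, and that you spell out the Cartesian-ness that the paper leaves as an evident remark.
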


\begin{proof}
$f^{*}(q): f^{*}(E) \to X$ exists by assumption and, furthermore, pullback powers of this map exist and are preserved by $T^n$ by assumption.  The pullback of an additive bundle is 
always an additive bundle, thus, the only issue is the behaviour of the lift $f^{*}(\lambda)$.  Above we have already seen that $(f^{*}(\lambda),0)$ is an additive 
bundle morphism.  We must also show that $(f^{*}(\lambda),f^{*}(\zeta))$ is an additive bundle morphism.  The map $f^{*}(\zeta)$ is the unique map 
to the pullback in the lower back square of the following diagram.  Just to obtain the basic data of a bundle morphism, we must start by showing that 
the left square face commutes.
$$\xymatrix{ & T(f^{*}(E)) \ar[dd]|{\hole}_<<<<p \ar[rr]^{T(f^{*}_E)} & & T(E) \ar[dd]^{p} \\ 
           f^{*}(E) \ar[dd]_{f^{*}(q)}  \ar[ur]^{f^{*}(\lambda)} \ar[rr]^>>>>>>>>>{f^{*}_E} & & E \ar[dd]^<<<<<<q \ar[ru]_{\lambda} \\
           & f^{*}(E) \ar[dd]_<<<<<<<<{f^{*}(q)}|\hole \ar[rr]_<<<<<<{f^{*}_E}|>>>>>>>>>>>>>>>{~~} & & E \ar[dd]^q \\
           X \ar@{=}[dr] \ar@{..>}[ur]^{f^{*}(\zeta)} \ar[rr]_>>>>>>>{f} & & M \ar@{=}[dr] \ar[ur]_{\zeta} \\
           & X \ar[rr]_{f} & & M}$$
However this square is a map to the apex of a pullback.  Thus, the square commutes if we can show that $f^{*}\!(\lambda)pf^{*}_E = f^{*}\!(q)f^{*}(\zeta)f^{*}_E$ 
and $f^{*}(\lambda)pf^{*}(q) = f^{*}(q)f^{*}(\zeta)f^{*}(q)$.  Here are the calculations:
\begin{eqnarray*}
f^{*}(\lambda)pf^{*}_E & = & f^{*}(\lambda)T(f^{*}_E)p = f^{*}_E \lambda p \\
& = & f^{*}_E q \zeta = f^{*}(q)f \zeta = f^{*}(q)f^{*}(\zeta)f^{*}_E \\
f^{*}(\lambda)pf^{*}(q) & = & f^{*}(\lambda)T(f^{*}(q))p = f^{*}(q)0 p = f^{*}(q) \\ 
& = & f^{*}(q)f^{*}(\zeta)f^{*}(q).
\end{eqnarray*}
Next we have to show that $(f^{*}(\lambda),f^{*}(\zeta)): f^{*}({\sf q}) \to {\sf p}_{f^{*}(E)}$ is an additive bundle morphism.  However, the 
codomain of this morphism is the apex of a pullback of additive bundle morphisms:
$$\xymatrix{{\sf p}_{f^{*}(E)} \ar[d]_{(T(f^{*}(q)),f^{*}(q))} \ar[rr]^{(T(f^{*}_E),f^{*}_E)} & & {\sf p}_E \ar[d]^{(T(q),q)} \\
            {\sf p}_X \ar[rr]_{(T(f),f)} & & {\sf p}_M }$$
Thus, it suffices to show that the morphisms $(f^{*}(\lambda),f^{*}(\zeta))(T(f^{*}(q)),f^{*}(q))$ and $(f^{*}(\lambda),f^{*}(\zeta))(T(f^{*}_E),f^{*}_E)$ are additive bundle morphisms.  The first expression equals $(f^{*}(q),1_X)(0,1_X)$ -- the final bundle morphism 
followed by the zero bundle morphism -- which is certainly additive.  The second expression equals $(f^{*}_E,f)(\lambda,\zeta)$ which is also certainly an additive bundle morphism.  

Next we must show that $f^{*}(\lambda)\ell = f^{*}(\lambda)T(f^{*}(\lambda))$. As these maps have codomain $T^2(f^{*}(E))$, which is the apex of a pullback, 
they are equal if post-composing with $T^2(f^{*}_E)$ and $T^2(f^{*}(q))$ makes them equal.  Here are the calculations:
\begin{eqnarray*}
f^{*}(\lambda)\ell T^2(f^{*}_E) & = & f^{*}(\lambda) T(f^{*}_E)\ell = f^{*}_E\lambda \ell = f^{*}_E\lambda T(\lambda) \\
& = & f^{*}(\lambda) T(f^{*}_E)T(\lambda) = f^{*}(\lambda) T(f^{*}_E \lambda)\\
& = & f^{*}(\lambda) T( f^{*}(\lambda) T(f^{*}_E)) = f^{*}(\lambda) T(f^{*}(\lambda)) T^2(f^{*}_E) \\
f^{*}(\lambda)\ell T^2(f^{*}(q)) & = & f^{*}(\lambda) T(f^{*}(q)) \ell = f^{*}(q)0 \ell = 0 \ell T^2(f^{*}(q)) \\
& = &  f^{*}(\lambda)T(f^{*}(\lambda))T^2(f^{*}(q)).
\end{eqnarray*}

Finally we must prove the universality of the lift: this is given by the following 
$$\xymatrix{ & T(f^{*}(E)) \ar[dd]|{\hole}^<<<<<<{T(f^{*}(q))} \ar[rr]^{T(f^{*}_E)} & & T(E) \ar[dd]^{T(q)} \\ 
           f^*(E_2) \ar[dd]_{f^{*}(\pi_0 q)}  \ar@{..>}[ur]^{f^{*}(\mu)} \ar[rr] & & E_2 \ar[dd]^<<<<<<{\pi_0 q} \ar[ru]_{\mu} \\
           & T(X) \ar[rr]_<<<<<<{T(f)}|>>>>>>>>>>>>{~~} & & T(M) \\
           X \ar[ur]^{0} \ar[rr]_{f} & & M \ar[ur]_{0}}$$
in which the front, back, and left faces are pullbacks (all preserved by $T^n$) so that there is a unique $f^{*}(\mu)$ completing the cube making all the vertical squares 
pullbacks (preserved by $T^n$).  It is easily checked that $f^{*}(\mu)$ is the $\mu$ map for $f^{*}({\sf q})$.
\end{proof}

An example of an application of this lemma is in forming the ``Whitney sum'' of two differential bundles. Given two differential bundles ${\sf q}$, ${\sf q'}$ over the same base $M$ in a Cartesian tangent category, the Whitney sum may be formed by taking the pullback of ${\sf q} \times {\sf q}'$ along the diagonal:
\[
\bfig
	\square<500,350>[E \x_M E'`E \times E'`M`M \times M;``q \times q'`\Delta]
\efig
\] 

It is easy to see that in a Cartesian tangent category ${\sf q} \times {\sf q}'$ is a differential bundle.   However, we must know that pullbacks of differential bundles along the diagonal map exist and are preserved by $T^n$ in order to form the pullback bundle.  In general this is not guaranteed.  There are however more general situations in which the Whitney sum exists and we shall return to this later when we discuss display systems (see section \ref{Display}).


\subsection{Properties of differential bundles}

Many of the basic properties of the tangent bundle in a tangent category (which were described in \cite{sman3}, section 2.5) can be generalized to differential bundles.  This section collects these generalizations, as they are very useful when concretely working with differential bundles.  

To begin, note that for a differential bundle ${\sf q}$ on $E$, $T(E)$ has two addition operations: $T(\sigma)$ and $+$.  Our first observation concerns the conditions under which one can 
interchange these operations:

\begin{lemma}\label{lemmaInterchange}(Interchange of addition)
If ${\sf q}$ is a differential bundle, then for maps $v_1, v_2, v_3, v_4: X \to TE$:
\begin{enumerate}[(i)]
\item Whenever both sides are defined, we can interchange the additions $T(\sigma)$ and $+$:
$$\<\<v_1,v_2\>T(\sigma), \<v_3, v_4\>T(\sigma)\>+ = \<\<v_1, v_3\>+, \<v_2, v_4\>+\>T(\sigma)$$
\item When $v_1T(q\zeta) = v_1p0$ and $v_2T(q \zeta) = v_2p0$ (that is, when $v_1$ and $v_2$ share a common zero) and when both sides are defined, then:
	\[ \<v_1,v_2\>T(\sigma) = \< v_1,v_2\>+_T \] 
\end{enumerate}
\end{lemma}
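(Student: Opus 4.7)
The plan for part (i) is to derive the interchange identity directly from naturality of the tangent structure transformation $+\colon T_2 \to T$ applied to $\sigma\colon E_2 \to E$. Since the differential bundle axioms require $T$ to preserve the pullback $E_2 = E \times_M E$, we may identify $T(E_2)$ with $T(E) \times_{T(M)} T(E)$; applying naturality of $+$ to the two projections $\pi_0, \pi_1 \colon E_2 \to E$ shows that under this identification the addition $+_{E_2}$ on $T_2(E_2)$ acts componentwise. The naturality square $T_2(\sigma) \cdot +_E = +_{E_2} \cdot T(\sigma)$, evaluated on the four inputs $v_1, v_2, v_3, v_4$, then becomes exactly the stated interchange equation. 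The only bookkeeping required is to observe that the hypothesis ``both sides defined'' matches the definedness needed on either side of the naturality square.

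For part (ii), the plan is to run an Eckmann-Hilton argument on the two commuting monoid structures $T(\sigma)$ and $+$ living on $T(E)$, where the common-zero hypothesis plays the role of identifying the units. Writing $z_i := v_i T(q\zeta)$ for $i=1,2$, the hypothesis says $z_i = v_i p 0$, so $z_i$ is simultaneously the $T(\sigma)$-unit over $v_i T(q)$ and the $+$-unit over $v_i p$. From the $T(\sigma)$-unit property one has $\langle v_1, z_1\rangle T(\sigma) = v_1$ and $\langle z_2, v_2\rangle T(\sigma) = v_2$, so
\[ \langle v_1, v_2\rangle + \; = \; \langle\, \langle v_1, z_1\rangle T(\sigma),\; \langle z_2, v_2\rangle T(\sigma)\,\rangle +. \]
Applying the interchange identity from part (i) swaps this into $\langle\, \langle v_1, z_2\rangle +,\; \langle z_1, v_2\rangle +\,\rangle T(\sigma)$; and since $z_1, z_2$ are also $+$-units over the common base $v_1 p = v_2 p$, the two inner $+$'s collapse to $v_1$ and $v_2$, producing $\langle v_1, v_2\rangle T(\sigma)$ as required.

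The main obstacle I anticipate is the careful bookkeeping of definedness at each step of the Eckmann-Hilton calculation. The essential observation is that applying $p$ to the common-zero hypothesis $v_i T(q\zeta) = v_i p 0$ yields $v_i p q \zeta = v_i p$, which combined with $v_1 p = v_2 p$ and $v_1 T(q) = v_2 T(q)$ (both forced by the ``both sides defined'' assumption in the statement) supplies the equalities of projections needed to justify each intermediate pairing $\langle v_i, z_j\rangle$ appearing in the argument. Once definedness is in hand, every remaining equation reduces either to a unit law for the underlying additive bundles or to the interchange identity from part (i).
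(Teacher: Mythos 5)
Your proposal is correct and takes essentially the same approach as the paper: part (i) is proved there exactly as you describe (naturality of $+$ against $\sigma$ combined with the componentwise description of the addition on $T_2(E_2)$ obtained from naturality against the projections), and part (ii) is the same Eckmann--Hilton computation, the only cosmetic difference being which slots the units occupy (the paper's arrangement ends at $\<v_2,v_1\>+$ and implicitly uses commutativity of $+$, while yours lands directly on $\<v_1,v_2\>T(\sigma)$). The definedness bookkeeping you flag is handled in the paper exactly as you anticipate.
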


\begin{proof}
\begin{enumerate}[{(i)}]
\item
The requirement that both sides of this equation be defined amounts to requiring that the following equations hold:
\[ v_1T(q) = v_2T(q), v_3T(q) = v_4T(q), v_1p = v_3p, \mbox{ and } v_2p = v_4p. \]

We have:
	\begin{eqnarray*}
	\lefteqn{\<\<v_1,v_2\>T(\sigma), \<v_3, v_4\>T(\sigma)\>+ } \\
	& = & \<\<v_1,v_2\>, \<v_3, v_4\>\>T_2(\sigma)+ \\
	& = & \<\<v_1,v_2\>, \<v_3, v_4\>\>+_{T_2}T(\sigma) \mbox{ (naturality of +)} \\
	& = & \<\<v_1, v_3\>+_T, \<v_2, v_4\>+_T\>T(\sigma) \mbox{ (see below).}
	\end{eqnarray*}
	For the last step, using the naturality of $+$, $+_{T_2}T(\p_0) = T_2(\p_0)+_T$ and  $+_{T_2}T(\p_1) = T_2(\p_1)+_T$, we have:
         \begin{eqnarray*}
	\lefteqn{\<\<v_1, v_2\>, \<v_3, v_4\>\>+_{T_2}} \\ & = & \<\<v_1, v_2\>, \<v_3, v_4\>\>\<T_2(\p_0)+_T, T_2(\p_1)+_T\> \\
	& = & \<\!\<v_1, v_2\>,\!\<v_3, v_4\>\!\> \<\!\<\p_0T(\p_1), \!\p_1T(\p_0)\>+_T,\!\<\p_0T(\p_1),\!\p_1T(\p_1)\>+_T\> \\
	& = & \<\<v_1, v_3\>+_T, \<v_2, v_4\>+_T\>.
	\end{eqnarray*}
\item
The requirement that both sides of this equation be defined amounts to requiring $v_1p=v_2p$ and $v_1T(q)=v_2T(q)$.  We then use an Eckmann-Hilton argument:
\begin{eqnarray*}
\<v_1,v_2\>T(\sigma) 
& = & \<\< v_2p0 v_1\>+,\<v_2,v_1p0\>+\>T(\sigma) \\
& = & \<\<v_2p0,v_2\>T(\sigma),\<v_1,v_1p0\>T(\sigma)\>+ ~~~\mbox{(interchange)} \\
& = & \<\<v_2T(q\zeta),v_2\>T(\sigma),\<v_1,v_1T(q\zeta)\>T(\sigma)\>+ \\
& = & \< v_2,v_1\>+
\end{eqnarray*}
\end{enumerate}
\end{proof}

Next, we develop some useful identities concerning the map $\mu$:

\begin{lemma}\label{lemmaV}
If ${\sf q}$ is a differential bundle then $\mu p = \pi_1$ and $\lambda = \<1,q\zeta\>\mu$.
\end{lemma}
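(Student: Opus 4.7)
The plan is to prove each identity by expanding the definition of $\mu = \langle \pi_0\lambda,\pi_1 0\rangle T(\sigma)$ and applying three key facts: (a) naturality of $p: T \Rightarrow I$ and of $0: I \Rightarrow T$, (b) the two additive bundle morphism properties of $\lambda$ built into the definition of a differential bundle, namely $\lambda p = q\zeta$ (from $(\lambda,\zeta): {\sf q} \to {\sf p}_E$) and $\lambda T(q) = q \cdot 0$ (from $(\lambda,0):{\sf q} \to T({\sf q})$), and (c) the unit law $\langle q\zeta,1\rangle\sigma = 1$ for the additive bundle ${\sf q}$, together with its image $\langle T(q)T(\zeta),1\rangle T(\sigma) = 1$ under $T$.

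For $\mu p = \pi_1$, I would first move $p$ inside $T(\sigma)$ using naturality of $p$: $T(\sigma)p_E = p_{E_2}\sigma$. Using naturality of $p$ with respect to $\pi_0,\pi_1$, one then computes that $\langle \pi_0\lambda,\pi_1 0\rangle p_{E_2} = \langle \pi_0\lambda p,\pi_1 0 p\rangle = \langle \pi_0 q\zeta,\pi_1\rangle$, where I have used $\lambda p = q\zeta$ and $0p = 1$. Since $\pi_0 q = \pi_1 q$ in $E_2$, this equals $\langle \pi_1 q\zeta,\pi_1\rangle = \pi_1\langle q\zeta,1\rangle \sigma = \pi_1$ by the unit law.

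For $\lambda = \langle 1,q\zeta\rangle\mu$, I would expand directly: $\langle 1,q\zeta\rangle\mu = \langle \lambda,q\zeta\cdot 0\rangle T(\sigma)$. Applying naturality of $0$ to the map $\zeta:M \to E$ rewrites $q\zeta\cdot 0 = q\cdot 0 \cdot T(\zeta)$, and using $q\cdot 0 = \lambda T(q)$ gives $q\zeta\cdot 0 = \lambda T(q) T(\zeta)$. Thus the expression becomes $\langle \lambda,\lambda T(q)T(\zeta)\rangle T(\sigma)$, which equals $\lambda$ by the unit law for the addition $T(\sigma)$ on the bundle $T(q)$ (whose zero is $T(\zeta)$).

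There is no real obstacle here beyond bookkeeping; the mild subtlety is checking well-definedness of the pairings (e.g., that $\langle\pi_0\lambda,\pi_1 0\rangle$ lands in $T(E_2)$, which uses $\pi_0q = \pi_1 q$ together with $\lambda T(q) = q\cdot 0$ and naturality of $0$) so that all composites with $T(\sigma)$ make sense.
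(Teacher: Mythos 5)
Your proof is correct and follows essentially the same route as the paper's: expand $\mu$, push $p$ (resp.\ $0$) through $T(\sigma)$ by naturality, apply the additive bundle morphism identities $\lambda p = q\zeta$ and $\lambda T(q) = q0$, and finish with the unit laws for $\sigma$ and $T(\sigma)$. The extra details you supply (using $\pi_0 q = \pi_1 q$ and rewriting $q\zeta 0$ as $\lambda T(q)T(\zeta)$) are exactly the steps the paper leaves implicit.
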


\begin{proof}
For the first claim:
\begin{eqnarray*}
\mu p & = & \<\pi_0 \lambda,\pi_1 0\>T(\sigma)p \\
& = & \<\pi_0 \lambda p,\pi_1 0p\> \sigma \mbox{ (naturality of $p$)} \\
& = & \<\pi_0 q \zeta,\pi_1\>\sigma \mbox{ ($(\lambda,\zeta)$ a bundle morphism)} \\
& = & \pi_1 \mbox{ (unit of addition)}
\end{eqnarray*}
For the second claim: 
\[ \<1,q\zeta\>\mu = \<1,q\zeta\>\<\pi_0 \lambda,\pi_1 0\>T(\sigma) = \<\lambda,q\zeta0\>T(\sigma) = \<\lambda,q0T(\zeta)\>T(\sigma) = \lambda. \]
\end{proof}

The following result generalizes Lemma 2.12 in \cite{sman3}: 

\begin{lemma}\label{lemmaVerticalLift}
In the presence of the other axioms for a differential bundle ${\sf q}$, the universality of the lift may be equivalently expressed by demanding either of the following:
\begin{enumerate}[(i)]
\item
$$E_2 \to^{\mu} T(E) \Two^{T(q)}_{pq0} T(M)$$
is an equalizer;
\item for any map $f: X \to TE$ such that $fT(q) = fpq0$, there is a unique map $\{f\}: X \to E$ such that
	 \[ f = \<\{f\}\lambda, f p0 \>T(\sigma). \]
\end{enumerate}
\end{lemma}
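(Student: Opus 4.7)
The plan is to establish the three formulations are equivalent by showing pullback $\Leftrightarrow$ (i) first, exploiting the fact that $0\colon M \to TM$ is a split mono with retraction $p$, and then (i) $\Leftrightarrow$ (ii) by unpacking the equalizer factorization through the explicit definition of $\mu$.

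For pullback $\Leftrightarrow$ (i): a cone over the cospan formed by $0\colon M \to TM$ and $T(q)\colon TE \to TM$ is a pair $(g,h)$ with $g\cdot 0 = h\cdot T(q)$. Since $0p = 1_M$, such a pair is redundant: from $g\cdot 0 = h\cdot T(q)$ one computes $g = hT(q)p = hpq$ by naturality of $p$, so $g$ is determined by $h$, and the cone condition collapses to the single equation $hT(q) = hpq\cdot 0$. Thus cones over the cospan correspond to maps $h$ equalizing $T(q)$ and $pq\cdot 0$. Moreover, given such an $h$ with a factorization $k\colon X \to E_2$ through $\mu$, the first component of the pullback datum is automatically recovered via $k\pi_0 q = k\pi_1 q = k\mu p q = hpq = g$, using $\mu p = \pi_1$ from Lemma \ref{lemmaV}. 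Hence the pullback and equalizer universal properties are equivalent.

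For (i) $\Leftrightarrow$ (ii): given $f\colon X \to TE$ with $fT(q) = fpq\cdot 0$, (i) gives a unique $k\colon X \to E_2$ with $k\mu = f$. Writing $k = \langle a,b\rangle$ (with $aq = bq$), the equation $k\mu = f$ expands to $\langle a\lambda, b\cdot 0\rangle T(\sigma) = f$; post-composing with $p$ and using $\mu p = \pi_1$ forces $b = fp$, so setting $\{f\} := a$ yields exactly the factorization $f = \langle \{f\}\lambda, fp\cdot 0\rangle T(\sigma)$ demanded by (ii), and uniqueness of $\{f\}$ follows from that of $k$. Conversely, assuming (ii), for any $f$ equalizing $T(q)$ and $pq\cdot 0$ we obtain $\{f\}$; applying $T(q)$ to $f = \langle \{f\}\lambda, fp\cdot 0\rangle T(\sigma)$ and using both $T(\sigma)T(q) = T(\pi_0)T(q)$ (since $\sigma$ is a map over $M$) and $\lambda T(q) = q\cdot 0$ (from $(\lambda,0)$ being an additive bundle morphism) gives $\{f\}q\cdot 0 = fT(q) = fpq\cdot 0$, whence $\{f\}q = fpq$. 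So $\langle \{f\}, fp\rangle\colon X \to E_2$ is well-defined and factors $f$ through $\mu$; uniqueness follows from that of $\{f\}$ together with $\mu p = \pi_1$.

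The work is essentially bookkeeping — carefully tracking the identities $\mu p = \pi_1$ and $\lambda T(q) = q\cdot 0$, and the composability side conditions that ensure pairings $\langle -,-\rangle T(\sigma)$ are defined. One minor subtlety is that Lemma \ref{lemmaV} is invoked throughout, but its proof uses only the bundle morphism axioms together with naturality of $p$ and $0$, not the universality of the lift being reformulated here, so there is no circularity.
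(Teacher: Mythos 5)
Your proof is correct and follows essentially the same route as the paper's: both reduce the pullback to the equalizer by exploiting that $0$ is split by $p$ (so the $M$-leg of any cone is forced to be $hpq$, and is recovered from a factorization via $\mu p = \pi_1$ from Lemma~\ref{lemmaV}), and both obtain $\{f\}$ as the $\pi_0$-component of the factorization through $\mu$. Your packaging of part (i) as a bijection of cones with equalizing maps, and your explicit treatment of the converse direction of (ii), are only presentational refinements of the paper's argument.
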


\begin{proof}
\begin{enumerate}[{\em (i)}]
\item Assuming the pullback,  first note that the $\mu T(q) = \pi_1 q 0 = \mu p q 0$ so $\mu$ equalizes the two maps.  
Now given $g$ with $gT(q) = gpq0$ then this gives a unique map $g|_\mu$ which mediates the pullback.  

Conversely, assuming the equalizer then $\mu T(q) = \mu p q 0 = \pi_1 q 0$ and so the square commutes. If $g T(q) = g' 0$ then 
$g T(q) = g'0 = g'0p0 = gT(q)p0 = gpq0$ so there is a unique map to the equalizer which makes the square a pullback.

\item Since $\mu$ is the equalizer, we have a unique map $f|_{\mu}: X \to E_2$ such that $f|_{\mu}\mu = f$.  We set $\{f\} := f|_{\mu}\p_0$ and have:
\begin{eqnarray*}
f & = & f|_{\mu} \mu \\
& = & f|_{\mu} \<\p_0 \lambda, \p_1 0\> T(\sigma) \\
& = & \<f|_{\mu}\p_0 \lambda, f|_{\mu}\p_1 0\> T(\sigma) \\
& = & \<\{ f \} \lambda, f|_{\mu} \mu p 0\> T(\sigma) \mbox{ (by lemma \ref{lemmaV})} \\
& = & \< \{ f \} \lambda, f p 0\> T(\sigma) 
\end{eqnarray*}
The uniqueness of $\{f\}$ follows from the uniqueness of $f|_{\mu}$ as
$$f = \< \{ f \}\lambda,fp0\> T(\sigma) = \< \{f\},f p\>  \<\p_0 \lambda, \p_1 0\> T(\sigma) = \<\{f\},fp\>{\mu}$$
so that $\< \{f\},f p \> = f|_{\mu}$.
\end{enumerate}
\end{proof}
Note that for a trivial differential bundle $M \to^{1} M$ and a map $f: X \to TM$, $\{f\}$ is just $fp$. \\

Because $T$ (and all powers of $T$) preserves the pullback expressing the universality of lift, it follows that: 

\begin{corollary} 
For a differential bundle ${\sf q}$ in a tangent category, $T$ and all powers of $T$ preserve the equalizer
$$E_2 \to^\mu T(E) \two^{T(q)}_{pq0} T(M).$$
\end{corollary}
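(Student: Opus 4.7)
The plan is to leverage the equivalence between the pullback and equalizer formulations of the universality of the lift established in Lemma \ref{lemmaVerticalLift}(i). By the very definition of a differential bundle, the pullback square witnessing the universality of the lift is preserved by each $T^n$. Thus after applying $T^n$ we already have, in the ambient category, the pullback of $T^{n+1}(q)$ along $T^n(0)$ with apex $T^n(E_2)$ and projections $T^n(\pi_0 q)$ and $T^n(\mu)$.

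The key observation is that the translation from the pullback form to the equalizer form in the proof of Lemma \ref{lemmaVerticalLift}(i) uses only two purely categorical facts: that $0$ is a section of $p$ (so $0 p = 1_M$), and the naturality identity $T(q)\, p = p\, q$. Both identities are preserved on the nose when one applies the functor $T^n$, yielding $T^n(0)\,T^n(p) = 1_{T^n(M)}$ and $T^{n+1}(q)\,T^n(p) = T^n(p)\,T^n(q)$. Hence at the $T^n$-level one can set up exactly the same bijective correspondence between pairs $(f,g)$ satisfying $f\,T^{n+1}(q) = g\,T^n(0)$ and single maps $f$ satisfying $f\,T^{n+1}(q) = f\,T^n(pq0)$, with $g$ uniquely recoverable from $f$ as $f\,T^n(p)\,T^n(q)$.

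Therefore the universal property of the preserved pullback transfers directly to the universal property of the equalizer of $T^{n+1}(q)$ and $T^n(pq0)$, so $T^n$ preserves the equalizer as claimed. The only potential obstacle is keeping track of which level the various maps live at, but since the argument is entirely functorial and no further tangent-categorical structure needs to be invoked, this amounts to routine bookkeeping rather than a genuine difficulty.
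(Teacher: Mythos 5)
Your proposal is correct and is essentially the paper's own argument: the paper derives this corollary in one line from the fact that $T^n$ preserves the pullback expressing the universality of the lift, together with the pullback--equalizer equivalence of Lemma \ref{lemmaVerticalLift}(i), which is exactly the route you take (you merely spell out why that equivalence relativizes functorially under $T^n$). No further comment is needed.
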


We shall have occasion to use the bracketing operation introduced in Lemma \ref{lemmaVerticalLift} and so 
it is useful to establish some of its key properties: 

\begin{lemma} \label{lemmaLift}
For $f, g: X \to TE$ which equalize $T(q)$ and $pq0$ and have $fpq = gpq$:
\begin{enumerate}[(i)]
	\item for any $k: Z \to X$, $k\{ f\} = \{ kf\}$;
        \item Suppose $(h,k): {\sf q} \to {\sf q}'$ is a linear bundle morphism, and $x: X \to T(E)$ equalizes $T(q)$ and $pq0$.  
Then $\{x\}h = \{xT(h)\}$;
	\item $\{f\}q = fT(q)p$ when the left hand side is defined;
	\item $\{0\} = q\zeta$;
	\item $\<\{f\},\{g\}\>\sigma = \{\<f,g\>T(\sigma)\} $ when either side is defined;
	\item $\<\{f\},\{g\}\>\sigma = \{ \<f,g\>+\}$ when  both sides are defined;
	\item $\{ \mu \} = \pi_0$ and $\{\lambda\} = 1$. 
\end{enumerate}
\end{lemma}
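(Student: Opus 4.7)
The unifying strategy is the uniqueness clause of Lemma \ref{lemmaVerticalLift}(ii): whenever $h: X \to T(E)$ satisfies $hT(q) = hpq0$, the map $\{h\}$ is the unique arrow making $h = \<\{h\}\lambda,\, hp0\>T(\sigma)$. For every item I will verify that the proposed right-hand side fits this defining equation for the appropriate input (and, where relevant, that the input itself equalises $T(q)$ and $pq0$); the conclusion then follows from uniqueness.

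Parts (i), (iii), (iv), and (vii) are essentially plug-and-chug. For (i), precomposing the defining equation for $f$ with $k$ exhibits $k\{f\}$ in the required form for $kf$. For (iii), postcomposing $f = \<\{f\}\lambda, fp0\>T(\sigma)$ with $T(q)$, and using $T(\sigma)T(q) = T(\pi_0)T(q)$ together with $\lambda T(q) = q0$ (from the additive bundle morphism $(\lambda, 0)$), collapses the right-hand side to $\{f\}q0$; applying $p$ gives $\{f\}q = fT(q)p$. For (iv), $0T(q) = q0 = 0pq0$ by naturality, and since $\lambda p = q\zeta$ and $\zeta\lambda = \zeta 0$ one has $q\zeta\lambda = q0 T(\zeta)$, which is the zero of $T(q)$ at $q0$; the unit law for $T(\sigma)$ then reduces $\<q\zeta\lambda,\, 0p0\>T(\sigma)$ to $0$. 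For (vii), the definition $\mu = \<\pi_0\lambda, \pi_1 0\>T(\sigma)$ combined with $\mu p = \pi_1$ (Lemma \ref{lemmaV}) yields $\{\mu\} = \pi_0$ immediately; and for $\{\lambda\} = 1$, one rewrites $\lambda p 0 = q\zeta 0 = q0 T(\zeta)$, the zero of $T(q)$ at $q0$, so the unit law gives $\lambda = \<\lambda,\, \lambda p 0\>T(\sigma)$.

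The two genuine calculations lie in (v) and (vi). For (v), I substitute the defining equations for $f$ and $g$ into $\<f,g\>T(\sigma)$, producing a four-fold $T(\sigma)$-sum. Commutativity and associativity of $T(\sigma)$ regroups this to isolate $\<\{f\}\lambda, \{g\}\lambda\>T(\sigma)$, which equals $\<\{f\},\{g\}\>\sigma\lambda$ by the addition preservation of $(\lambda, 0): {\sf q} \to T({\sf q})$; the residual $\<fp0, gp0\>T(\sigma)$ reduces to $\<f,g\>T(\sigma)p0$ by naturality of $p$ and $0$. Equalisation of $\<f,g\>T(\sigma)$ follows since the hypotheses $fT(q) = fpq0$, $gT(q) = gpq0$, and $fpq = gpq$ together force $fT(q) = gT(q)$, so the pair is defined, and then $\<f,g\>T(\sigma)T(q) = fpq0 = \<f,g\>T(\sigma)pq0$. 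Part (vi) uses the same template with $+$ as outer operation: interchange (Lemma \ref{lemmaInterchange}(i)) converts $\<f,g\>+$ into a pair under $T(\sigma)$; the addition preservation of $(\lambda, \zeta): {\sf q} \to {\sf p}_E$ rewrites $\<\{f\}\lambda, \{g\}\lambda\>+$ as $\<\{f\}, \{g\}\>\sigma\lambda$; and the hypothesis $fp = gp$ makes $fp0$ and $gp0$ equal to the common zero of $p_E$ at $fp$, so $\<fp0, gp0\>+ = fp0 = \<f,g\>+p0$.

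Part (ii) is the most delicate. The plan is once more to check that $\{x\}h$ satisfies the defining equation for $\{xT(h)\}$. Unwinding $x = \<\{x\}\lambda, xp0\>T(\sigma)$ and postcomposing with $T(h)$ gives $xT(h) = \<\{x\}\lambda, xp0\>T(\sigma h)$, and one rewrites $xp0 T(h) = xT(h)p0$ by naturality of $p$ and $0$, while linearity supplies $h\lambda' = \lambda T(h)$. The essential step is replacing $T(\sigma h)$ by $T((h \times h)\sigma')$, that is, invoking the additivity $\sigma h = (h\times h)\sigma'$ of the linear morphism $(h, k)$. This is the main obstacle, since the definition of linearity in the paper only demands preservation of the lift; I therefore rely on the result, established independently in this paper, that linear morphisms of differential bundles automatically preserve addition. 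Once this is in hand the equation $xT(h) = \<\{x\}h\lambda',\, xT(h)p0\>T(\sigma')$ follows and uniqueness yields (ii).
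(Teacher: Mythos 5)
Your proof is correct and follows essentially the same route as the paper's: each identity is established by checking that the proposed map satisfies the defining equation of Lemma \ref{lemmaVerticalLift}(ii) for the appropriate input and then appealing to uniqueness, with the same key ingredients (the additive bundle morphisms $(\lambda,0)$ and $(\lambda,\zeta)$, naturality of $p$ and $0$, and interchange for part (vi)). The one point worth flagging is your appeal in (ii) to the additivity of linear morphisms: the paper's own proof uses this tacitly (in the step justified by ``$T(h)$ is an additive bundle morphism''), and your explicit citation of Proposition \ref{linearisadditive} is legitimate and non-circular, since that proposition rests on Lemma \ref{vertical-lift-as-equaliser} and hence only on part (iii) of the present lemma.
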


\begin{proof}
\begin{enumerate}[{\em (i)}]
\item $k\{f\} = kf|_v\pi_0 = (kf)|_v \pi_0 = \{kf\}. $
\item We must first check that $\{xT(h)\}$ is valid term; that is that $xT(h)$ equalizes $T(q')$ and $pq'0$:
\begin{eqnarray*}
xT(h)T(q') & = & xT(hq') = xT(qk) = xT(q)T(k) \\
& = & xT(q)p0T(k) \mbox{ (by assumption)} \\
& = & xT(q)pk0 = xT(q)T(k)p0 = xT(qk)p0 \\
& = & xT(hq')p0 = xT(h)T(q')p0 = xT(h)pq'0
\end{eqnarray*}
To show that $\{x\}h = \{xT(h)\}$, we need to show that $\{x\}h$ satisfies the same universal property as $\{xT(h)\}$:
\begin{eqnarray*}
&   & \<\{x\}h\lambda',xT(h)p0\>T(\sigma) \\
& = & \<\{x\}\lambda T(h),fp0T(h)\>T(\sigma) \\ 
~&~&~~~~~~~~~~\mbox{ ($g$ is a linear bundle morphism and naturality of $p0$)} \\
& = & \<\{x\}\lambda,xp0\>\<T(\pi_0)T(h),T(\pi_1)T(h)\>T(\sigma) \\
& = & \<\{x\}\lambda,xp0\>T(\sigma)T(h) \mbox{ ($T(h)$ is an additive bundle morphism)} \\
& = & xT(h)
\end{eqnarray*} 
\item We have:
\begin{eqnarray*}
fT(q)p & = & \<\{f\}\lambda, fp0\>T(\sigma)T(q)p \\
& = & \<\{f\}\lambda, fp0\>T(\pi_0)T(q)p \mbox{ ($\sigma$ is a bundle morphism)} \\
& = & \{f\}\lambda T(q) p \\
& = & \{f\}q0p \mbox{ ($(\lambda,0)$ is additive)} \\
& = & \{f\}q \mbox{ (since $0p = 1$)}.
\end{eqnarray*}

\item We need to show $q\zeta$ has the same universal property as $\{0\}$:
\begin{eqnarray*}
 \<q \zeta \lambda,0p0\>T(\sigma) 
& = & \<q0T(\zeta),0\>T(\sigma) \mbox{ (additivity of $\lambda$)} \\
& = & \<0T(q)T(\zeta),0\>T(\sigma) \mbox{ (naturality of $0$)} \\
& = & 0\<T(q)T(\zeta),1\>T(\sigma) \\
& = & 0 \mbox{ (addition of a zero term)}
\end{eqnarray*}

\item Suppose that $\<\{f\},\{g\}\>\sigma$ is well-defined.  Since $\{f\}$ and $\{g\}$ are well-defined, $fT(q) = fT(q)p0$ and $gT(q) = gT(q)p0$.  Since $\<\{f\},\{g\}\>\sigma$ is well-defined, $\{f\}q = \{g\}q$ so by (ii) $fT(q)p = gT(q)p$.  Thus
	\[ fT(q) = fT(q)p0 = gT(q)p0 = gT(q) \]
so $\<f,g\>T(\sigma)$ is well-defined, and
	\[ \< f,g\>T(\sigma)T(q) = fT(q) = fT(q)p0 = \< f,g\>T(\sigma)T(q)p0 \]
so that $\{ \<f,g\>T(\sigma)\}$ is defined.

Conversely, suppose that $\{\< f,g\>T(\sigma)\}$ is defined.   Then $fT(q)=gT(q)$ and $\< f,g\>T(\sigma)T(q) = \<f,g\>T(\sigma)T(q)p0$.  Thus
	\[ fT(q) = \<f,g\>T(\sigma)T(q) = \<f,g\>T(\sigma)T(q)p0 = fT(q)p0 \]
so $\{f\}$ and similarly $\{g\}$ is defined.  Finally, by (ii)
	\[ \{f\}q = fT(q)p = gT(q)p = \{g\}p \]
so $\<\{f\},\{g\}\>\sigma$ is defined.  

Thus one side is defined if and only if the other side is.  

It remains  to show $\<\{f\},\{g\}\>+$ has the same universal property as $\{\<f,g\>T(\sigma)\}$:
\begin{eqnarray*}
\lefteqn{\<\<\{f\},\{g\}\>+\lambda, \<f,g\>T(\sigma)p0\>T(\sigma)} \\
& = & \<\<\{f\}\lambda,\{g\}\lambda\>T(\sigma), \<fp0, gp0\>T(\sigma)\>T(\sigma) \\
&~&~~~~~~~~~~\mbox{ ($\lambda$ is a morphism of bundles)} \\
& = & \<\<\{f\}\lambda,fp0\>T(\sigma), \<\{g\}\lambda, gp0\>T(\sigma)\>T(\sigma) \\
&~&~~~~~~~~~~\mbox{ (associativity and commutativity)} \\
& = & \<f,g\>T(\sigma).
\end{eqnarray*}
\item As in the previous result, we just need to show that $\<\{f\},\{g\}\>\sigma$ has the same universal property as $\{\<f,g\>+\}$:
\begin{eqnarray*}
\lefteqn{\<\<\{f\},\{g\}\>\sigma \lambda, \<f,g\>+p0\>T(\sigma)} \\
& = & \<\<\{f\}\lambda,\{g\}\lambda\>+, fp0\>T(\sigma)  ~~\mbox{($\lambda$ is a morphism of bundles)} \\
&= & \<\<\{f\}\lambda,\{g\}\lambda\>+, \< fp0, gp0\> + \>T(\sigma)\\
&~&~~~~~~~~~~\mbox{(as $fp=gp$ and adding of zero)}\\
& = & \<\< \{ f \}\lambda,fp0\> T(\sigma),\< \{ g \}\lambda,gp0\> T(\sigma) \>+\\
&~&~~~~~~~~~~\mbox{(interchange from lemma \ref{lemmaInterchange})}\\
& = & \<f, g\> +.
\end{eqnarray*}
 \item The first is immediate from the universality of $\mu$ but this means by lemma \ref{lemmaV} that
	\[ \{\lambda\} = \{\<1,q\zeta\>\mu\} = \<1,q\zeta\>\{\mu\} = \<1,q\zeta\>\pi_0 = 1. \]
\end{enumerate}
\end{proof}

The above generalizes most of lemma 2.14 of \cite{sman3} to differential bundles, with in particular {\em (vii)} of lemma 2.14 generalizing to {\em (ii)} in the above.  The only result not generalized is 2.14{\em (vi)}.  This is generalized by the following result, which relates the bracketing operation for the differential bundle $T({\sf q})$ (see Lemma \ref{lemmaTDiffBundle}) to the bracketing operation in the differential bundle $\sf q$:
\begin{lemma}\label{lemmaBracketT0}
If ${\sf q}$ is a differential bundle and $f: X \to T(E)$ has $f T(q) = fpq0$ then $T(\{f\}) = \{T(f)c\}$.
\end{lemma}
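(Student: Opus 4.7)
The strategy is to use the uniqueness of the bracket for the differential bundle $T({\sf q})$: I will show that $T(\{f\})$ satisfies the defining universal equation of $\{T(f)c\}$ for the bundle $T({\sf q})$, which then forces the equality by Lemma~\ref{lemmaVerticalLift}(ii).

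First I would check that $T(f)c$ is a legal argument to the bracket of $T({\sf q})$, i.e. that it equalises $T(T(q))$ and $p_{T(E)} T(q) 0_{T(M)}$. This is a short calculation combining the hypothesis $fT(q)=fpq0$, the identities $T(0)c=0_T$ and $cp_T=Tp$ (which are part of the statement that $(c,1)$ is an additive bundle morphism in the definition of tangent structure), and naturality of $p$. With this in place, $\{T(f)c\}_{T({\sf q})}$ is well-defined, and its universal characterisation reads
\[
T(f)c \;=\; \bigl\langle\, \{T(f)c\}_{T({\sf q})}\cdot T(\lambda)c,\; T(f)c\cdot p_{T(E)}\cdot 0_{T(E)} \,\bigr\rangle\, T^2(\sigma),
\]
since $T(\lambda)c$, $T(\sigma)$ and the zero on the tangent bundle of $T(E)$ are the lift, addition and zero used when forming $\mu$ for $T({\sf q})$.

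Next I would apply $T$ to the defining equation $f=\langle \{f\}\lambda, fp0\rangle T(\sigma)$ for $\{f\}$ inside ${\sf q}$, using that $T$ preserves the pullback $E_2$ (so that $T$ distributes over the pairing $\langle\,,\,\rangle$), obtaining
\[
T(f) \;=\; \bigl\langle\, T(\{f\})\,T(\lambda),\; T(f)\,T(p)\,T(0) \,\bigr\rangle\, T^2(\sigma).
\]
Then post-composing with $c$ is the key move: by naturality of $c\colon T^2\to T^2$ applied to $\sigma\colon E_2\to E$ one has $T^2(\sigma)\,c_E = c_{E_2}\,T^2(\sigma)$, and naturality on the two projections of $E_2$ gives $\langle h_0,h_1\rangle c_{E_2} = \langle h_0 c, h_1 c\rangle$. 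This produces
\[
T(f)c \;=\; \bigl\langle\, T(\{f\})\,T(\lambda)\,c,\; T(f)\,T(p)\,T(0)\,c \,\bigr\rangle\, T^2(\sigma).
\]

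Finally I would reconcile the second components: the left side is $T(fp)\,0_T$ after using $T(0)c=0_T$, while in the universal equation the corresponding slot is $T(f)c\cdot p_{T(E)}\cdot 0_{T(E)} = T(fp)\,0_{T(E)}$ after using $cp_{T(E)}=T(p_E)$. These agree because $0_T$ at $E$ and $0_{T(E)}$ are the same instance of the natural transformation $0$. Hence $T(\{f\})$ satisfies the universal equation for $\{T(f)c\}_{T({\sf q})}$ and the result follows from uniqueness. The only real obstacle is bookkeeping: at each stage one must carefully distinguish the base bundle ${\sf q}$ from $T({\sf q})$, and keep track of which instance of $0$, $p$, or $c$ is being used; once the two coherence identities $T(0)c=0_T$ and $cp_T=Tp$ are invoked in the right places, the algebraic manipulations are routine.
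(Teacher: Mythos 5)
Your proposal is correct and follows essentially the same route as the paper: both verify that $T(f)c$ equalises the relevant maps so that $\{T(f)c\}$ is defined, and then show $T(\{f\})$ satisfies the universal equation characterising $\{T(f)c\}$ for the bundle $T({\sf q})$, using naturality of $c$ together with the coherences $T(0)c=0_T$ and $cp_T=T(p)$ coming from $(c,1)$ being an additive bundle morphism. The only difference is cosmetic — you apply $T$ to the defining equation for $\{f\}$ and then post-compose with $c$, whereas the paper runs the same computation in reverse starting from the candidate expression.
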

\begin{proof}
We first need to show $\{T(f)c\}$ is well-defined; that is, we need to show that 
	\[ T(f)cT^2(q) = T(f)cT^2(q)p0. \]
Indeed,
\begin{eqnarray*}
T(f)cT^2(q) 
& = & T(f)T^2(q)c \mbox{ (naturality of $c$)} \\
& = & T(fT(q))c \\
& = & T(fT(q)p0)c \mbox{ (assumption on $f$)} \\
& = & T(f)T^2(q)T(p)T(0)c \\
& = & T(f)T^2(q)cp0 \mbox{ (coherences on $c$)} \\
& = & T(f)cT^2(q)p0 \mbox{ (naturality of $c$)}
\end{eqnarray*}
We now check $T(\{f\})$ has the same universal property as $\{T(f)c\}$:
\begin{eqnarray*}
\lefteqn{\<T(\{f\})T(\lambda)c, T(f)cp0\>T^2(\sigma) }\\
& = & \<T(\{f\}\lambda)c,T(f)T(p)T(0)c \> T^2(\sigma) \mbox{ (coherence for $c$)} \\
& = & \<T(\{f\}\lambda),T(fp0) \> T^2(\sigma)c \mbox{ (naturality of $c$)} \\
& = & T(\<\{f\}\lambda,fp0\>T(\sigma)c \\
& = & T(f)c
\end{eqnarray*}
as required. 
\end{proof}

Lemma 2.14{\em (vi)\/} of \cite{sman3} states $T(\{ f\}) = \{T(f)cT(c)\}c$ when the left side is defined.   The fact that $c$ is part of a linear morphism with \ref{lemmaLift}{\em (ii)} gives $\{T(f)cT(c)\}c = \{T(f)cT(c)T(c)\} = \{T(f)c \}$ which allows one to inter-derive the two results.

The next result generalizes lemma 2.13 of \cite{sman3}.

\begin{lemma} \label{vertical-lift-as-equaliser}
For a differential bundle, ${\sf q}$, the following is a joint-equalizer diagram:
	\[\xymatrix{ & & & T(M) \\ E \ar[r]^{\lambda} & T(E)   \ar@<1ex>[rru]^{T(q)} \ar@<-1ex>[rru]_{pq0} \ar@<1ex>[rrd]^{p} \ar@<-1ex>[rrd]_{pq\zeta} \\ & & & E} \]
and thus, in particular, $\lambda$ is monic.
\end{lemma}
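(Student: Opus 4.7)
The strategy is to use the bracketing operation $\{-\}$ of Lemma \ref{lemmaVerticalLift}(ii), combined with the identity $\lambda = \<1,q\zeta\>\mu$ of Lemma \ref{lemmaV} and the behaviour of $\{-\}$ under composition and on $\lambda$ from Lemma \ref{lemmaLift}. First I verify that $\lambda$ equalizes both parallel pairs. Since $(\lambda,0): {\sf q} \to T({\sf q})$ and $(\lambda,\zeta): {\sf q} \to {\sf p}_E$ are additive bundle morphisms, one has $\lambda T(q) = q0$ and $\lambda p = q\zeta$, so combining with $\zeta q = 1_M$ gives $\lambda T(q) = q0 = \lambda p q 0$ and $\lambda p = q\zeta = \lambda p q \zeta$.

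For the universal property, suppose $h: X \to T(E)$ satisfies $hT(q) = hpq0$ and $hp = hpq\zeta$. The first condition is exactly the hypothesis of Lemma \ref{lemmaVerticalLift}(ii), so it produces a unique $\{h\}: X \to E$ with $h = \<\{h\}\lambda, hp0\>T(\sigma)$. Using $\{h\}q = hT(q)p = hpq$ from Lemma \ref{lemmaLift}(iii), this rewrites as $h = \<\{h\}, hp\>\mu$ (the pair being well-defined in $E_2$ for the same reason). Applying Lemma \ref{lemmaV},
\[ \{h\}\lambda \,=\, \<\{h\},\{h\}q\zeta\>\mu \,=\, \<\{h\},hpq\zeta\>\mu \,=\, \<\{h\},hp\>\mu \,=\, h, \]
where the second equality uses $\{h\}q = hpq$ and the third uses the second equalizing hypothesis $hpq\zeta = hp$. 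Thus $\{h\}$ factors $h$ through $\lambda$.

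For uniqueness, if $k\lambda = h$ then Lemma \ref{lemmaLift}(i) and (vii) give $k = k\{\lambda\} = \{k\lambda\} = \{h\}$; monic-ness of $\lambda$ then follows from the fact that equalizers are monic. The only subtle point, and the heart of the argument, is recognizing that the second equalizing condition $hp = hpq\zeta$ is precisely what is needed to transform the ``extra zero-section'' component $hp0$ appearing in the $\mu$-factorization of $h$ into a $\lambda$-image, so that $\<\{h\},hp\>\mu$ collapses to $\{h\}\lambda$; without this condition one would only obtain a factoring through $\mu$, not through $\lambda$.
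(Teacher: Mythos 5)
Your proof is correct and follows essentially the same route as the paper's: both extract $\{h\}$ from the first equalizing condition via the universality of the lift and then use the second condition $hp = hpq\zeta$ together with $\{h\}q = hT(q)p$ to collapse the zero-component and conclude $h = \{h\}\lambda$. Your repackaging through $\mu$ and the identity $\lambda = \<1,q\zeta\>\mu$ of Lemma \ref{lemmaV} is just a tidier form of the paper's ``addition of a zero term'' computation, and your explicit uniqueness step via Lemma \ref{lemmaLift}(i),(vii) fills in a detail the paper leaves implicit.
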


\begin{proof}
First, $\lambda$ equalizes the above maps since $(\lambda,0)$ and $(\lambda,\zeta)$ are bundle morphisms:
	\[ \lambda T(q)p0 = q0p0 = q0 = \lambda T(q) \]
and
	\[ \lambda pq \zeta = q\zeta q \zeta = q\zeta = \lambda p. \]

If $f: X \to TE$ equalizes the above two maps, then in particular $fT(q) = fT(q)p0$, so by lemma \ref{lemmaVerticalLift}(i), there exists a unique map $\{f\}: X \to E$ with the property that 
	\[ f = \<\{f\}\lambda,fp0\>T(\sigma) \]
We claim that $\{f\}$ is the required unique map to show universality of $\lambda$.  Starting with the above equation, we get
\begin{eqnarray*}
f & = & \<\{f\}\lambda,fp0\>T(\sigma) \\
& = & \<\{f\}\lambda,fpq\zeta 0\>T(\sigma) \mbox{ (by assumption on f)} \\
& = & \<\{f\}\lambda,fT(q)p0T(\zeta)\>T(\sigma) \mbox{ (naturality of $p$ and $0$)} \\
& = & \<\{f\}\lambda,\{f\}q0T(\zeta)\>T(\sigma) \mbox{ (by Lemma \ref{lemmaLift} (iii))} \\
& = & \<\{f\}\lambda,\{f\}\lambda T(q)T(\zeta)\>T(\sigma) \mbox{ ($(\lambda,0)$ is a bundle morphism)} \\
& = & \{f\}\lambda\<1,T(q)T(\zeta)\>T(\sigma) \\
& = & \{f\}\lambda \mbox{ (addition of zero term)} 
\end{eqnarray*}
as required.
\end{proof}

The following is also a useful observation:

\begin{lemma} If ${\sf q}$ is a differential bundle then
$$M \to^\zeta E \Two^{0}_{\lambda} T(E)$$
is an equalizer.
\end{lemma}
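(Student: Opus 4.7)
The plan is to show the two parts of the equalizer property directly using the additive bundle morphism axioms for $(\lambda,\zeta): {\sf q} \to {\sf p}_E$.

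First I would check that $\zeta$ does equalize $0$ and $\lambda$: since $(\lambda,\zeta): (E,q,\sigma,\zeta) \to (T(E),p,+,0)$ is an additive bundle morphism, it must preserve the additive unit, which is exactly the equation $\zeta\lambda = \zeta 0$.

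Next, for universality, I would take any $f: X \to E$ with $f\lambda = f0$ and propose $h := fq: X \to M$ as the mediating map. To see $h\zeta = f$, I would use the fact that $(\lambda,\zeta)$ being a bundle morphism gives the commuting square $\lambda p = q\zeta$. Since $0p = 1_{E}$, starting from the equalizing hypothesis we get
\[ f \;=\; f\cdot 0\cdot p \;=\; f\lambda p \;=\; fq\zeta \;=\; h\zeta, \]
as required.

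For uniqueness, if any $h': X \to M$ satisfies $h'\zeta = f$, then post-composing with $q$ and using $\zeta q = 1_M$ (since $\zeta$ is the unit of a commutative monoid in $\X/M$, hence a section of $q$), we get $h' = h'\zeta q = fq = h$.

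No step is really the obstacle here — everything follows from the two axioms packaged into the phrase ``$(\lambda,\zeta)$ is an additive bundle morphism'' together with $\zeta q = 1_M$. The only subtlety is picking the right mediating map $h = fq$, which is forced by the uniqueness argument anyway.
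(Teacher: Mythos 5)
Your proof is correct and follows essentially the same route as the paper's: the paper also notes that $\zeta$ is a section (hence monic, giving uniqueness) and verifies the factorization via the identical computation $xq\zeta = x\lambda p = x0p = x$, using $\lambda p = q\zeta$ and $0p = 1$. The only difference is that you explicitly record the fork condition $\zeta\lambda = \zeta 0$ as the unit-preservation clause of the additive bundle morphism $(\lambda,\zeta)$, which the paper leaves implicit.
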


\begin{proof}
$\zeta$ is a section so monic, thus it suffices to show that if $x 0 = x \lambda$ that $x$ factors through $\zeta$.  
To this end we have:
$$x q \zeta = x \lambda p = x 0 p = x.$$
\end{proof}

\subsection{Properties of linear bundle morphisms}

Morphisms of differential bundles are not required to preserve the additive structure; they simply commute with the projections of the bundles.  Bundle morphisms which preserve the lift, that is, the linear bundle 
morphisms, are of fundamental importance.  They generalize the linear maps of Cartesian differential categories (we shall see this later in proposition \ref{linearDiffOver1}), and so also generalize linear maps in the ordinary 
sense.  

In this subsection we prove some basic properties of these linear bundle morphisms. 

\begin{proposition} \label{linearisadditive} 
Linear morphisms of differential bundles are additive.
\end{proposition}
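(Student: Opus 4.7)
The statement says that a linear morphism $(f,g): {\sf q} \to {\sf q}'$ of differential bundles automatically preserves the addition $\sigma$ and the unit $\zeta$, that is, $f_2 \sigma' = \sigma f$ and $g \zeta' = \zeta f$. My plan is to leverage the bracketing operation $\{-\}$ from Lemma \ref{lemmaLift} together with the monicity of $\lambda'$ (Lemma \ref{vertical-lift-as-equaliser}). The bracket interacts cleanly with linear morphisms via Lemma \ref{lemmaLift}(ii), which says $\{x\}f = \{xT(f)\}$, and with arbitrary pre-composition via Lemma \ref{lemmaLift}(i), which says $k\{x\} = \{kx\}$, making it a natural vehicle for transporting additive structure.

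For unit preservation, I would begin from Lemma \ref{lemmaLift}(iv), giving $\{0_E\} = q\zeta$. Transporting along $f$ via Lemma \ref{lemmaLift}(ii), then using naturality of $0$ to rewrite $0_E T(f) = f 0_{E'}$, then pulling $f$ out via Lemma \ref{lemmaLift}(i), and finally applying Lemma \ref{lemmaLift}(iv) to ${\sf q}'$ together with the bundle square $fq' = qg$, produces
$$q\zeta f = \{0_E\}f = \{0_E T(f)\} = \{f 0_{E'}\} = f\{0_{E'}\} = fq'\zeta' = qg\zeta'.$$
Pre-composing with $\zeta: M \to E$ and using $\zeta q = 1_M$ then cancels the stray $q$ and yields $\zeta f = g\zeta'$.

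For addition preservation, I would post-compose with $\lambda'$, which is monic by Lemma \ref{vertical-lift-as-equaliser}. Using linearity $f\lambda' = \lambda T(f)$, then the addition condition of $(\lambda, \zeta): {\sf q} \to {\sf p}_E$ (yielding $\sigma\lambda = \lambda_2 +_E$ with $\lambda_2 = \<\pi_0 \lambda, \pi_1 \lambda\>$), then naturality of $+$ (giving $+_E T(f) = T(f)_2 +_{E'}$), then linearity once more to identify $\lambda_2 T(f)_2 = f_2 \lambda'_2$, and finally the corresponding addition condition for $(\lambda', \zeta')$, one obtains $\sigma f \lambda' = f_2 \sigma' \lambda'$, from which the result follows by monicity of $\lambda'$.

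The main obstacle is the unit equation: the naive approach via $\lambda'$ monic only reduces $\zeta f = g\zeta'$ to $0_M T(\zeta f) = 0_M T(g\zeta')$ and thence to $T(\zeta f) = T(g\zeta')$, which cannot be concluded without assuming $T$ faithful. The bracket trick bridges this gap by exploiting that $\zeta$ is a section of $q$, allowing the stray $q$ to be cancelled after the computation is performed at the higher level.
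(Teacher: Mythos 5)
Your argument for preservation of addition is, step for step, the paper's own proof: post-compose with the monic $\lambda'$ (Lemma \ref{vertical-lift-as-equaliser}), rewrite $\sigma f\lambda'$ as $\sigma\lambda T(f)$ by linearity, expand $\sigma\lambda = \<\pi_0\lambda,\pi_1\lambda\>+$ using the additive bundle morphism $(\lambda,\zeta)$, push $T(f)$ past $+$ by naturality, apply linearity componentwise, reassemble via the corresponding identity for $\lambda'$, and cancel $\lambda'$. Where you diverge is the unit. The paper reaches $q\zeta f = qg\zeta'$ by the direct chain $q\zeta f = \lambda p f = \lambda T(f)p = f\lambda' p = fq'\zeta' = qg\zeta'$, using only $\lambda p = q\zeta$ (from the bundle morphism $(\lambda,\zeta)$), naturality of $p$, linearity of $f$, and $\lambda'p = q'\zeta'$, and then cancels $q$, which is split epic -- precisely your closing move of pre-composing with the section $\zeta$. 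You instead reach the same identity by transporting $\{0\} = q\zeta$ along $f$ via Lemma \ref{lemmaLift}(i), (ii) and (iv). Taken with the lemma statements as black boxes this is valid, but there is a latent circularity you should be aware of: the paper's proof of Lemma \ref{lemmaLift}(ii) justifies the step $\<T(\pi_0)T(h),T(\pi_1)T(h)\>T(\sigma') = T(\sigma)T(h)$ by asserting that ``$T(h)$ is an additive bundle morphism'', which is exactly the additivity of the linear morphism $h$ that Proposition \ref{linearisadditive} is meant to establish. So routing the unit equation through Lemma \ref{lemmaLift}(ii) risks presupposing the conclusion. The repair is cheap -- substitute the paper's four-step direct computation, which needs no bracket machinery -- and your diagnosis of why the naive ``post-compose with monic $\lambda'$'' tactic fails for the unit equation is accurate; but as written, the unit half of your proof leans on a lemma whose given proof already assumes the proposition.
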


\begin{proof}
If $(f,g): (q,\sigma,\zeta,\lambda) \to (q',\sigma',\zeta',\lambda')$ is a linear morphism of bundles (so that $\lambda T(f) = f\lambda'$) then 
to say it is additive is to require that $\sigma f = \<\pi_0f,\pi_1f\>\sigma'$ and $\zeta f = g \zeta'$.
To show the first equality, we post-compose by $\lambda'$ and use the fact that $\lambda'$ is monic 
(see Lemma \ref{vertical-lift-as-equaliser}):
\begin{eqnarray*}
\sigma f \lambda' 
& = & \sigma \lambda T(f) \mbox{ (linearity of $f$)} \\
& = & \<\pi_0 \lambda,\pi_1 \lambda\>+T(f) \mbox{ ($\lambda$ is additive)} \\
& = & \<\pi_0 \lambda,\pi_1 \lambda\>\<\pi_0 T(f),\pi_1 T(f)\>+ \mbox{ ($T(f)$ is additive)} \\
& = & \<\pi_0 \lambda T(f),\pi_1 \lambda T(f)\>+ \\
& = & \<\pi_0 f \lambda',\pi_1 f \lambda'\>+ \mbox{ (linearity of $f$)} \\
& = & \<\pi_0 f,\pi_1 f\>\<\pi_0 \lambda',\pi_1\lambda'\>+ \\
& = & \<\pi_0 f,\pi_1 f\>\sigma \lambda' \mbox{ ($\lambda'$ is additive)} 
\end{eqnarray*}
For the preservation of zero, we use the fact that $q$ is epic:
\begin{eqnarray*}
q \zeta f  
& = & \lambda p f = \lambda T(f) p = f \lambda' p \\
& = &  f q' \zeta' = q g \zeta'.
\end{eqnarray*}
\end{proof}

This generalizes the fact that linear maps in a Cartesian differential category are always additive (lemma 2.2.2{\em (i)} in \cite{cartDiff}).

The following shows that how linearity of a bundle morphism is related to preservation of $\mu$:

\begin{lemma}\label{mulambda} Suppose $(f,g): {\sf q} \to {\sf q'}$ is a bundle morphism.  If $(f,g)$ is linear, then the identity $\mu T(f) = \< \pi_0f,\pi_1 f\>\mu'$ holds.
Conversely, if this identity holds and the bundle morphism preserves zero, that is $\zeta f = g \zeta'$, then the bundle morphism is linear.  
\end{lemma}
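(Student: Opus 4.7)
My plan is to leverage the identity $\lambda = \<1, q\zeta\>\mu$ established in Lemma \ref{lemmaV}, which tells us that $\lambda$ is essentially determined by $\mu$ together with the zero section. This makes both directions mostly formal, once one tracks the pullback data carefully.

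For the forward implication, assume $(f,g)$ is linear, so $\lambda T(f) = f\lambda'$. By Proposition \ref{linearisadditive}, $(f,g)$ is then also additive, so in particular $\sigma f = \<\pi_0 f, \pi_1 f\>\sigma'$ (where the right-hand pairing is into $E'_2$). Applying $T$ yields $T(\sigma)T(f) = \<T(\pi_0)T(f), T(\pi_1)T(f)\>T(\sigma')$. Substituting this into the definition of $\mu$ and then using linearity $\pi_0 \lambda T(f) = \pi_0 f \lambda'$ together with naturality of $0$ (so that $\pi_1 0 T(f) = \pi_1 f 0$) gives
\[
\mu T(f) = \<\pi_0 f \lambda', \pi_1 f 0\>T(\sigma') = \<\pi_0 f, \pi_1 f\>\<\pi_0\lambda', \pi_1 0\>T(\sigma') = \<\pi_0 f, \pi_1 f\>\mu'.
\]

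For the converse, assume $\mu T(f) = \<\pi_0 f, \pi_1 f\>\mu'$ and $\zeta f = g\zeta'$. Using Lemma \ref{lemmaV},
\[
\lambda T(f) = \<1, q\zeta\>\mu T(f) = \<1, q\zeta\>\<\pi_0 f, \pi_1 f\>\mu' = \<f, q\zeta f\>\mu'.
\]
Now $q\zeta f = q g \zeta' = f q' \zeta'$, the first equality by the zero-preservation hypothesis and the second because $(f,g)$ is a bundle morphism. Hence $\lambda T(f) = \<f, fq'\zeta'\>\mu' = f\<1, q'\zeta'\>\mu' = f\lambda'$, again by Lemma \ref{lemmaV}.

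There is no real obstacle here; the only point requiring any care is ensuring the pullback pairings are well-defined at each step, which follows from $(f,g)$ being a bundle morphism (so $fq' = qg$) and from the additivity automatically supplied by Proposition \ref{linearisadditive} in the forward direction. The asymmetry between the two directions, namely that zero-preservation must be assumed in the converse but comes for free in the forward direction, is exactly because $\mu$ carries no information about where $\zeta$ lands once we discard linearity.
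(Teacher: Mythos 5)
Your proof is correct and follows essentially the same route as the paper: the forward direction uses additivity (via Proposition \ref{linearisadditive}) plus preservation of the $E_2$ pullback by $T$ to push $T(f)$ through the definition of $\mu$, and the converse precomposes the identity with $\<1,q\zeta\>$ and invokes Lemma \ref{lemmaV} exactly as the paper does. No substantive differences.
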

\begin{proof}
If $(f,g)$ is a linear bundle morphism it is additive so that:
\begin{eqnarray*}
\mu T(f) & = & \<\pi_0 \lambda, \pi_1 0 \> T(\sigma) T(f) \\
& = & \<\pi_0 \lambda, \pi_1 0 \> T(\sigma f) \\
& = & \<\pi_0 \lambda, \pi_1 0 \> T((f \times f)\sigma') \\
& = & \<\pi_0 \lambda, \pi_1 0 \> (Tf \times Tf)\sigma') \mbox{ ($T$ preserves the pullback for $E_2$)} \\
& = & \<\pi_0 \lambda T(f), \pi_1 0 T(f) \> T(\sigma') \\
& = & \<\pi_0 f \lambda', \pi_1 f0 \> T(\sigma') \\
& = & (f \times f)\<\pi_0 \lambda', \pi_1 0\>T(\sigma') \\
& = & (f \times f)\mu'
\end{eqnarray*}
Conversely, if $(f,g)$ satisfies the above identity and preserves zero, then pre-composing with $\<1,q\zeta\>$ gives
\begin{eqnarray*}
 \lambda T(f) & = & \<1,q\zeta\>\mu T(f)  =  \<1,q\zeta\>(f \times f)\mu' \\
& = & \<f,q\zeta f\>\mu'  =  \<f,qg\zeta'\>\mu' \\
& = & \<f,fq'\zeta'\>\mu'  =  f\<1,q'\zeta'\>\mu' \\
& = & f\lambda'
\end{eqnarray*}
so that $(f,g)$ is a linear bundle morphism.
\end{proof}

Another useful result is that the inverse of a linear bundle morphism is automatically linear:

\begin{lemma}\label{lemmaLinearInverses}
Suppose $(f,g): \sf{q} \to \sf{q'}$ is a linear bundle morphism.
\begin{enumerate}[(i)]
	\item If $(f,g)$ is a retract and $(h,k): \sf{q'} \to \sf{q''}$ is a bundle morphism such that $(fh,gk)$ is linear, then $(h,k)$ is linear.
	\item If $(f,g)$ is a bundle isomorphism then $(f^{-1},g^{-1})$ is linear.
\end{enumerate}
\end{lemma}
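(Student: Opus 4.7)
The plan is to prove (i) by a single cancellation argument that combines the two linearity hypotheses, and to derive (ii) by applying (i) to the inverse.

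For (i), I would start from the linearity of $(fh, gk)$ to write
$$fh\,\lambda'' \;=\; \lambda\,T(fh) \;=\; \lambda\,T(f)\,T(h),$$
and then apply linearity of $(f,g)$, namely $\lambda T(f) = f\lambda'$, to rewrite the right-hand side, obtaining
$$fh\,\lambda'' \;=\; f\,\lambda'\,T(h).$$
It remains to cancel $f$ on the left (in diagrammatic order). This is where the retract hypothesis enters: it supplies a bundle morphism $(r,s):{\sf q'}\to{\sf q}$ with $(r,s)(f,g)=(1_{\sf q'},1_{\sf q'})$, so that $rf=1_{E'}$ in diagrammatic order; hence $f$ is a split epimorphism and in particular epic. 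Cancellation then yields $h\lambda'' = \lambda'\,T(h)$, which is exactly the linearity condition for $(h,k)$.

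For (ii), I would simply apply (i) with $(h,k):=(f^{-1},g^{-1})$. One first checks that $(f^{-1},g^{-1})$ is in fact a bundle morphism: composing the equation $fq'=qg$ with the inverses yields $q'g^{-1}=f^{-1}q$, as required. Since $(f,g)$ is an isomorphism it certainly admits a section (namely $(f^{-1},g^{-1})$ itself), so it is a retract in the sense demanded by (i); and the composite $(fh,gk)=(ff^{-1},gg^{-1})$ equals the identity bundle morphism $(1_E,1_M)$, which is trivially linear. Part (i) then delivers linearity of $(f^{-1},g^{-1})$.

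The argument is essentially a one-line manipulation. The only delicate point is recognising the correct direction of the retract: the hypothesis must provide the retraction side (so that $f$ becomes a split epi), because the identity we derive has $f$ applied first and we need to cancel it as an epi rather than as a mono.
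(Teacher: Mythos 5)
Your proof is correct and takes essentially the same route as the paper's: you combine the two linearity hypotheses into $fh\lambda'' = f\lambda' T(h)$ and cancel $f$ via its section (the paper pre-composes with the section $f'$ explicitly rather than invoking that split epis are epic, but this is the same cancellation). Part (ii) is likewise identical, specializing (i) to $(h,k)=(f^{-1},g^{-1})$ and using linearity of the identity morphism.
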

\begin{proof}
\begin{enumerate}[{\em (i)}]
\item Let $(f',g')$ be a section of $(f,g)$.  Then since $(f,g)$ and $(fh,gk)$ are linear we have
	\[ \lambda' T(h) = f'f \lambda''T(h) = f'\lambda T(f)T(h) = f' \lambda T(fh) = f'f h \lambda'' = h \lambda'' \]
so that $(h,k)$ is linear, as required.
\item This follows immediately from (i) since the identity bundle morphism is linear.
\end{enumerate}
\end{proof}


\section{Differential structure} \label{diff-structure}


In a Cartesian tangent category one can consider differential bundles over the final object.  As we shall see, these special differential bundles have an 
alternate description as a {\em differential object\/}, first defined in \cite{sman3}.   When the tangent structure behaves well with respect to slicing, as is the case in synthetic 
differential geometry (see Example \ref{basicdiffbundles} (4)), it is possible to use the structure of a differential object in the slice $\X/M$ as the {\em definition\/} 
of a differential bundle over $M$.   However, as discussed in the introduction, securing the necessary behaviour with respect to slicing requires some delicacy, and will be returned to in later sections of this paper.

Differential objects were developed in order to relate tangent categories to Cartesian differential categories.  In \cite{sman3} it was shown that the differential 
objects of any Cartesian tangent category always form a Cartesian differential category.   In section \ref{diff-fibrations}, we  will need to know when 
a Cartesian tangent category {\em is\/} a Cartesian differential category.  Sorting this out is the main and rather technical objective of this section.  The problem is this: clearly a Cartesian tangent category which is a Cartesian differential category must have associated to every object a ``canonical'' differential structure.  However, 
objects may possess multiple differential structures and an arbitrary choice of differential structure for each object will not necessarily be compatible with extant 
tangent structure.  There must, therefore, be a manner of choosing from the possible differential structures which ``fits'' with the extant tangent structure.  We call such a choice a {\em coherent\/} choice of differential structure.  Furthermore, we describe precise requirements for a choice to be coherent.  We then show that a Cartesian 
tangent category with such a coherent choice of differential structure is a Cartesian differential category.  This rather technical result 
underpins the transition from tangent categories to Cartesian differential categories which is needed to establish one of the main results of this paper, Theorem \ref{display-is-diff-fib}, which shows that there is a tangent fibration of differential bundles, and in this fibration, every fibre is a Cartesian differential category.

\subsection{Differential objects}


In \cite{sman3}, an important structure an object of a tangent category can possess, called  {\em differential structure\/}, was described.  Objects with differential structure are called {\em differential objects\/} and they play an analogous role to vector spaces in the category of smooth manifolds.  Of course, since a general tangent category has no analogue of the real numbers there is no notion of ``vector space'' as such.  Instead differential structure is defined based on another special property enjoyed by vector spaces: namely, that of having a trivial tangent bundle in the sense that $T(V) \cong V \times V$.  Since the tangent bundle already has one projection map, 
$p: T(A) \to A$, in order to define a differential object, it suffices to demand the existence of a further projection $\hat{p}: T(A) \to A$ satisfying certain properties.  

\begin{definition}
For an object $A$ in a Cartesian tangent category, \textbf{differential structure on $A$} consists of a commutative monoid structure 
$\sigma: A \times A \to A$, $\zeta: 1 \to A$ on $A$, (making $(!_A,\zeta,\sigma)$ an additive bundle over $1$), together with a map $\hat{p}: TA \to A$ such that 
\begin{itemize}
\item $A \from^{\hat{p}} TA \to^{p} A$ is a product diagram;
\item $(\hat{p},!_{T(A)}) : (TA,T(\zeta),T(\sigma)) \to (A,\zeta,\sigma)$ is an additive bundle morphism, that is the following diagrams 
commute: 
$$\xymatrix{T(1) \ar[d]_{T(\zeta)}  \ar[rr]^{!_{T(1)}} & & 1 \ar[d]^{\zeta} \\
            T(A) \ar[rr]_{\hat{p}} & & A} ~~~~~
\xymatrix{T(A \x A) \ar[d]^{T(\sigma)} \ar[rr]^{ \< T(\pi_0 \hat{p},T(\pi_1)\hat{p} \> } & & A \x A \ar[d]^{\sigma} \\
          T(A) \ar[rr]_{\hat{p}} & & A}$$
\item $(!_A,\hat{p}): (TA,0,+) \to (A,\zeta,\sigma)$ is an additive bundle morphism, that is the following diagrams commute:
$$\xymatrix{A \ar[d]_{0_A} \ar[rr]^{!_A} && 1 \ar[d]^{\zeta} \\ T(A) \ar[rr]_{\hat{p}} & & A}
 ~~~~~\xymatrix{T_2(A) \ar[d]_{+} \ar[rr]^{\< \pi_0\hat{p},\pi_1\hat{p}\>} & & A \x A \ar[d]^{\sigma} \\ T(A) \ar[rr]_{\hat{p}} & & A}$$

\item $\hat{p}$ is coherent with respect to the vertical lift in the sense that the following commutes:
$$\xymatrix{T(A) \ar[d]_{\hat{p}} \ar[rr]^\ell & & T^2(A) \ar[d]^{T(\hat{p})} \\ A & & T(A) \ar[ll]^{\hat{p}}}$$
\end{itemize}
An object together with a specified differential structure is called a \textbf{differential object}.
\end{definition}

The requirement that the map $T(\zeta)$ is preserved is actually implied by the fact that the map $0_A$ is preserved:

\begin{lemma}\label{linearityOfZeta}
If $(A,\sigma, \zeta)$ is a commutative monoid in a Cartesian differential category and $\hat{p}: T(A) \to A$ is any map 
then $T(\zeta)\hat{p} = !_{T(1)} \zeta$ if $0_A\hat{p} = !_A\zeta$.
\end{lemma}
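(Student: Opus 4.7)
The plan is to use the fact that in a Cartesian tangent category, $T$ preserves the terminal object, so $T(1) \cong 1$ and in particular $0_1 \colon 1 \to T(1)$ is inverse to $!_{T(1)} \colon T(1) \to 1$. This lets us insert $0_1$ after $!_{T(1)}$, bring the naturality of $0$ into play, and then collapse using the hypothesis.

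Concretely, I would argue as follows. Since $T$ preserves the terminal object, $!_{T(1)}\,0_1 = 1_{T(1)}$, and so
\[ T(\zeta)\,\hat{p} \;=\; !_{T(1)}\,0_1\,T(\zeta)\,\hat{p}. \]
Now apply naturality of the zero natural transformation to the map $\zeta \colon 1 \to A$, which gives $0_1\,T(\zeta) = \zeta\,0_A$. Substituting yields
\[ T(\zeta)\,\hat{p} \;=\; !_{T(1)}\,\zeta\,0_A\,\hat{p}. \]
The hypothesis $0_A\,\hat{p} = !_A\,\zeta$ then gives $T(\zeta)\,\hat{p} = !_{T(1)}\,\zeta\,!_A\,\zeta$, and finally $\zeta\,!_A = 1_1$ by uniqueness of maps into the terminal object, leaving $T(\zeta)\,\hat{p} = !_{T(1)}\,\zeta$, as required.

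There is no real obstacle here; the only subtle point is recognizing that, because Cartesian tangent categories have $T$ preserve finite products, $T(1)$ is again terminal and hence $0_1$ is invertible. Once that observation is made, the rest is a short diagram chase through naturality of $0$ and the universal property of $1$.
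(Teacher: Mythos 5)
Your proof is correct and is essentially the paper's own argument run in reverse: both insert the identity on $T(1)$ (you as $!_{T(1)}0_1$, the paper as $p_10_1$, equal by terminality of $T(1)$), apply naturality of $0$ at $\zeta$, and then collapse with the hypothesis $0_A\hat{p} = !_A\zeta$ and the universal property of the terminal object.
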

\begin{proof}
Consider the calculation:
\begin{eqnarray*}
!_{T(1)}\zeta & = & p_1 \zeta !_A \zeta \mbox{ \ (uniqueness of terminal objects)} \\
& = & p_1 \zeta 0_A \hat{p} \mbox{ \ (as $0_A\hat{p} = !_A\zeta$)} \\
& = & p_1 0_1 T(\zeta) \hat{p} \mbox{ \ (naturality of 0)} \\
& = & T(\zeta) \hat{p} \mbox{ \ ($T(1)$ is terminal)} .
\end{eqnarray*}
\end{proof}

Thus, the form of this definition has some redundancy, which was exploited in the original definition given in \cite{sman3}.  Here we have chosen a more natural presentation which displays the additive 
bundle morphisms involved.  It should also be noted that the definition in \cite{sman3} omitted the important axiom $\ell T(\hat{p})\hat{p} = \hat{p}$. 

\begin{example}{\em ~
\begin{enumerate}[{\em (i)}]
	\item In the category of finite-dimensional smooth manifolds, the differential objects are vector spaces, $\mathbb{R}^n$, as their tangent bundle is $T(\mathbb{R}^n) \simeq \mathbb{R}^n \x \mathbb{R}^n$.
	\item Similarly, in the category of convenient manifolds, the convenient vector spaces are differential objects.
	\item In the category of affine schemes, that is ($\mbox{cRing}^{\op}$), polynomial rings $\mathbb{Z}[x_1,x_2 \ldots x_n]$ are differential objects.
	\item In a model of SDG the ring of line type is always a differential object, as the requirement that $R^D \simeq R \x R$ is part of its definition.  More generally, in models of SDG differential objects are exactly 
	         Euclidean $R$-modules  (see section \ref{sectionRepDiffObjects}).
	\item In a Cartesian differential category (see section \ref{secCartDiffCats}) every object is canonically a differential object.
\end{enumerate}}
\end{example}

An alternative viewpoint to take on the definition of differential objects is as follows.  Since $A$ is a commutative monoid and $T$ preserves products, $T(A)$ is also a commutative monoid.  The first two requirements of a differential object then ask that $T(A)$ be a product in the category of commutative monoids in $\X$, $\mbox{cMon}(\X)$.  However, $\mbox{cMon}(\X)$ is an additive category and so in fact the first two requirements are equivalent to asking that $T(A)$ be a biproduct in $\mbox{cMon}(\X)$.  Thus, by general results about biproducts in an additive category (see, for example, \cite{macLane}, VIII.2), this part of the definition could equivalently be given by asking that $T(A)$ be a coproduct in $\mbox{cMon}(\X)$ (involving, in particular, a map $\lambda$ from $A$ to $T(A)$), or by asking that there be a $\hat{p}$ and a $\lambda$ satisfying certain equations.  These observations will be useful in the proof of:

\begin{proposition} \label{diffObjeqdiffBun}
In a Cartesian tangent category, to give a differential object is precisely to give a differential bundle over the final object.  
\end{proposition}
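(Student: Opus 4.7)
The plan is to construct explicit translations in both directions and verify they are mutually inverse. Throughout, use that in a Cartesian tangent category $T$ preserves products and hence $T(1) \cong 1$.

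\medskip\noindent\textbf{From differential object to differential bundle over $1$.} Given $(A, \sigma, \zeta, \hat{p})$, set $q := !_A : A \to 1$ with the given monoid structure $(\sigma, \zeta)$, and define $\lambda : A \to T(A)$ as the unique map into the product $A \xleftarrow{\hat{p}} T(A) \xrightarrow{p} A$ with $\lambda \hat{p} = 1_A$ and $\lambda p = !_A \zeta$. Since the two additive-bundle-morphism axioms on $\hat{p}$ make $T(A)$ a biproduct of $A$ with itself in $\mathrm{cMon}(\X)$ (as remarked just before the proposition), $\lambda$ is the corresponding second injection, and from this one immediately reads off that $(\lambda, 0_1)$ and $(\lambda, \zeta)$ are additive bundle morphisms --- for the first the base square is trivial since $T(1)$ is terminal, and additivity is part of being a biproduct injection; the second is symmetric. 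For the universality of lift, since $T(1) \cong 1$, the required pullback collapses to the statement that $\mu : A \times A \to T(A)$ is an isomorphism. Using Lemma \ref{lemmaV}, $\mu p = \pi_1$, and one checks $\mu \hat{p} = \pi_0$ from the defining equations of $\lambda$; thus $\mu \langle \hat{p}, p\rangle = \mathrm{id}$, and since $\langle \hat{p}, p\rangle$ is already an isomorphism, so is $\mu$.

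\medskip\noindent\textbf{From differential bundle over $1$ to differential object.} Given $(q, \sigma, \zeta, \lambda)$ with $q = !_E$, the universality-of-lift pullback has its bottom edge between terminal objects and thus collapses to $\mu : E_2 = E \times E \xrightarrow{\sim} T(E)$. Define $\hat{p} := \mu^{-1}\pi_0$. By Lemma \ref{lemmaV} we have $\mu p = \pi_1$, so $\langle \hat{p}, p\rangle : T(E) \to E \times E$ is the inverse of $\mu$ and the first axiom for a differential object holds. The two additive-bundle-morphism axioms on $\hat{p}$ follow from those for $(\lambda, 0)$ and $(\lambda, \zeta)$ via the biproduct duality (equivalently, by composing with $\mu$ on one side and using that $\mu$ respects $T(\sigma)$ and $+$ on the relevant factors), with Lemma \ref{linearityOfZeta} absorbing the $T(\zeta)$-preservation clause.

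\medskip\noindent\textbf{The vertical-lift coherence.} The main technical step is matching $\lambda \ell = \lambda T(\lambda)$ with $\ell T(\hat{p})\hat{p} = \hat{p}$. Once the biproduct identification $T(A) \cong A \oplus A$ is established, $T^2(A)$ decomposes as a four-fold biproduct via $\langle T(\hat{p})\hat{p}, T(\hat{p})p, T(p)\hat{p}, T(p)p\rangle$. Both $\lambda \ell$ and $\lambda T(\lambda)$ are maps $A \to T^2(A)$; to see they are equal, post-compose with each of the four components and evaluate using the already-established bundle-morphism properties of $\lambda$, naturality of $\ell$ and $p$, and the coherence of $\hat{p}$ with $\ell$. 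The only nontrivial component is the $T(\hat{p})\hat{p}$ one, where the coherence $\ell T(\hat{p})\hat{p} = \hat{p}$ is exactly what is needed; the other three components evaluate to trivial expressions built from $q$, $\zeta$, and $0$. The converse direction is symmetric, using Lemma \ref{vertical-lift-as-equaliser} to see that $\lambda$ is monic so that equations can be extracted from $\lambda \ell = \lambda T(\lambda)$.

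\medskip\noindent\textbf{Mutual inverseness.} Both round-trips are direct: starting from a differential object, the reconstructed $\hat{p}$ is $\mu^{-1}\pi_0$ where $\mu$ is built from the original $\lambda = \langle 1, !\zeta\rangle \langle \hat{p}, p\rangle^{-1}$, and the defining equations force $\mu^{-1}\pi_0 = \hat{p}$; conversely, starting from a differential bundle, the reconstructed $\lambda$ is characterized by $\lambda\hat{p} = 1$ and $\lambda p = !\zeta$, which hold for the original $\lambda$ since $\mu\hat{p} = \pi_0$ and $\mu p = \pi_1$, combined with $\lambda = \langle 1, q\zeta\rangle \mu$ from Lemma \ref{lemmaV}. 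The bulk of the work --- and the only genuine obstacle --- lies in the vertical-lift coherence step, since everything else is a straightforward translation through the product/biproduct isomorphism $T(A) \cong A \times A$.
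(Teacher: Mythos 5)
Your proposal is correct and follows essentially the same route as the paper's proof: identify $T(A)$ as a biproduct of $A$ with itself in $\mbox{cMon}(\X)$, take $\lambda = \<1,!\zeta\>$ in one direction and $\hat{p} = \mu^{-1}\pi_0 = \{1\}$ in the other, and verify the vertical-lift coherence by decomposing $T^2(A)$ along the four projections $T(\hat{p})\hat{p}$, $T(\hat{p})p$, $T(p)\hat{p}$, $T(p)p$. The only imprecision is in the converse coherence step: monicity of $\lambda$ is not the right tool for extracting $\ell T(\hat{p})\hat{p} = \hat{p}$ from $\lambda\ell = \lambda T(\lambda)$ (you would need to cancel $\lambda$ as a \emph{pre}-composite); what actually does the work, as in the paper's calculation, is the identity $1_{T(A)} = \<\hat{p},p\>\mu$ together with the additivity of $\hat{p}$, which lets you expand $\ell T(\hat{p})\hat{p}$ until the axiom $\lambda\ell = \lambda T(\lambda)$ can be applied.
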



\begin{proof}
Given a differential object $(A,\hat{p},\sigma,\zeta)$, we use the universal property of the product $T(A)$ to set $\lambda = \<1,!\zeta\>$.  (Note that the other injection, $\<!\zeta,1\>$, is $0: A \to T(A)$ since $0\hat{p} = !\zeta$.)  As mentioned above, by general results about biproducts in an additive category, the universality of $\lambda$ and one of the additivity requirements for $\lambda$ follows automatically.  

For the other additivity requirements,
\begin{eqnarray*}
\<\pi_0\lambda,\pi_1\lambda\>+\hat{p} 
& = & \<\pi_0\lambda,\pi_1\lambda\>\<\pi_0\hat{p},\pi_1\hat{p}\>\sigma \mbox{ (coherence for $\hat{p}$)} \\
& = & \<\pi_0\lambda \hat{p},\pi_1\lambda \hat{p} \> \sigma \\
& = & \<\pi_0,\pi_1\>\sigma \mbox{ (by definition of $\lambda$)} \\
& = & \sigma \\
& = & \sigma\<1,!\zeta\>\hat{p}
\end{eqnarray*}
and
\[ \zeta \lambda \hat{p} = \zeta = \zeta ! \zeta = \zeta 0 \hat{p}. \]
The final coherence, $\lambda \ell_A = \lambda T(\lambda)$, asks for the equality of two maps into $T^2(A)$.  Since $T(A)$ is a product and $T$ preserves products, 
$T^2(A)$ is also a product with projections 
	\[ \<T(\hat{p})\hat{p}, T(\hat{p})p, T(p)\hat{p}, T(p)p\>. \]
Thus, it suffices to check the two sides of the equality are equal when post-composed by each of these four maps.  For the first, by the last coherence for $\hat{p}$,
	\[ \lambda \ell_A T(\hat{p}) \hat{p} = \lambda \hat{p} = 1 \]
while
	\[ \lambda T(\lambda)T(\hat{p})\hat{p} = \lambda T(\lambda \hat{p}) \hat{p} = \lambda \hat{p} = 1 \]
For the second,
	\[ \lambda \ell_A T(\hat{p})p = \lambda \ell_A p \hat{p} = \lambda p0 \hat{p} = ! \zeta ! \zeta = ! \zeta \]
while
	\[  \lambda T(\lambda)T(\hat{p})p = \lambda p = ! \zeta \]
Checking for post-composing with the other two maps is straightforward.

Conversely, if we have a differential bundle over $1$ $(A,\lambda,\sigma,\zeta)$, we set $\hat{p} := \{1\}$.  The universality of $\hat{p}$ and the first additive requirement follows from general biproduct results.  

For the other two additive requirements for a differential object, we have
	\[ 0\hat{p} = 0\{1\} = \{0\} = !\zeta \]
by lemma \ref{lemmaLift}(iii), and
	\[ +\hat{p} = +\{1\} = \{+\} = \{\<\pi_0,\pi_1\>+\} = \<\{\pi_0\},\{\pi_1\}\>\sigma = \<\pi_0\hat{p},\pi_1\hat{p}\>\sigma \]
by lemma \ref{lemmaLift}(vi).  

For the coherence with vertical lift we will use the fact (which, again, follows from general results about biproducts), that $\<\hat{p},p\>\mu = 1$:
\begin{eqnarray*}
\ell T(\hat{p})\hat{p} & = & \<\hat{p},p\>\mu \ell T(\hat{p})\hat{p} \\
& = & \<\hat{p},p\>\<\pi_0 \lambda, \pi_1 0\>T(\sigma) \ell T(\hat{p})\hat{p} \\
& = & \<\hat{p} \lambda, p 0\> \ell T^2(\sigma) T(\hat{p})\hat{p} \\
& = & \<\hat{p} \lambda, p 0\> \ell T(T(\sigma)\hat{p})\hat{p} \\
& = & \<\hat{p} \lambda, p 0\> \ell T(\<T(\pi_0) \hat{p}, T(\pi_1) \hat{p}\>\sigma)\hat{p}  \\
& = & \<\hat{p} \lambda, p 0\> \ell \<T^2(\pi_0)T(\hat{p}, T^2(\pi_1)T(\hat{p})\>T(\sigma)\hat{p} \\
& = & \<\hat{p} \lambda, p 0\> \<T(\pi_0) \ell T(\hat{p}, T(\pi_1)\ell T(\hat{p})\>\<T(\pi_0) \hat{p}, T(\pi_1) \hat{p}\>\sigma \\
& = & \<\hat{p} \lambda, p 0\> \<T(\pi_0) \ell T(\hat{p}, T(\pi_1)\ell T(\hat{p})\>\<T(\pi_0) \hat{p}, T(\pi_1) \hat{p}\>\sigma \\
& = & \<\hat{p}\lambda \ell T(\hat{p})\hat{p}, p0 \ell T(\hat{p})\hat{p}\>\sigma \\
& = & \<\hat{p}\lambda T(\lambda)T(\hat{p})\hat{p}, p0 0 T(\hat{p})\hat{p}\>\sigma  \\
& = & \<\hat{p}\lambda \hat{p}, p 0\hat{p} 0 \hat{p} \> \sigma \\
& = & \<\hat{p}, p! \zeta ! \zeta \>\sigma \\
& = & \hat{p}
\end{eqnarray*}

Thus differential bundles over $1$ and differential objects are in bijective correspondence.  
\end{proof}

Differential objects abound as one can construct a differential object from any differential bundle ${\sf q}$ (including the tangent bundle) by pulling back along a point of the base:

\begin{corollary}\label{corTangentSpaces}
In any Cartesian tangent category, if $q: E \to M$ is a differential bundle and  $a: 1 \to M$ is any point such that the pullback $E_a$ of $a$ along $q$ itself is preserved by $T^n$, then $E_a$ is a differential object.
\end{corollary}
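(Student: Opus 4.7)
The proof plan is essentially a two-step composition of results already established in the paper, so the main work is just verifying the hypotheses line up.

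First, I would pull back the differential bundle $\mathsf{q}$ along the point $a\colon 1 \to M$. The hypothesis of the corollary is exactly the hypothesis needed by Lemma \ref{pullbackDiffBundle}: the pullback of $q$ along $a$ exists and is preserved by each $T^n$ (and this automatically includes the wide pullback powers of $q$, since these are themselves pullbacks of $q$ along $q$-composites and pull back uniformly). Applying Lemma \ref{pullbackDiffBundle} directly yields that $a^{*}(\mathsf{q})$ is a differential bundle whose projection is $a^{*}(q)\colon E_a \to 1$.

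Second, since $1$ is the terminal object, the projection $a^{*}(q)$ is necessarily the unique map $!_{E_a}\colon E_a \to 1$, so $a^{*}(\mathsf{q})$ is a differential bundle over the final object of the Cartesian tangent category. By Proposition \ref{diffObjeqdiffBun}, giving such a differential bundle over $1$ is precisely giving a differential object structure on $E_a$. Thus $E_a$ inherits canonical differential structure $(E_a, \hat{p}, \sigma_a, \zeta_a)$, where $\sigma_a$ and $\zeta_a$ are the pulled-back addition and zero, and $\hat{p} := \{1_{T E_a}\}$ is produced from the universality of the lift of $a^{*}(\mathsf{q})$ via the bracketing operation of Lemma \ref{lemmaVerticalLift}.

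There is really no obstacle here: the only thing to be slightly careful about is confirming that the pullback-preservation hypothesis stated in the corollary (which refers only to pulling back $q$ itself along $a$) supplies all the pullbacks and preservation needed by Lemma \ref{pullbackDiffBundle} (which needs the wide pullback powers of $q$ pulled back along $a$, all preserved by $T^n$). This follows because the $n$-fold wide pullback $E_n$ of $q$ along itself, when further pulled back along $a$, is the same as the $n$-fold wide pullback of $a^{*}(q)$ along itself, and pullbacks of pullbacks assemble from the given pullback of $q$ along $a$ (which exists and is $T^n$-preserved by hypothesis) together with the wide pullbacks of $q$ along itself (which exist and are $T^n$-preserved because $\mathsf{q}$ is a differential bundle). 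Hence the composite proof gives the result immediately.
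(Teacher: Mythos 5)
Your proposal is correct and is exactly the paper's own argument: the published proof simply cites Lemma \ref{pullbackDiffBundle} to get that $E_a$ is a differential bundle over $1$ and then Proposition \ref{diffObjeqdiffBun} to identify it with a differential object. Your extra care about the wide pullback powers is a reasonable elaboration of a point the paper leaves implicit, but it does not change the route.
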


\begin{proof}
By \ref{pullbackDiffBundle}, $E_a$ is a differential bundle over $1$, and so by the previous result is a differential object.  
\end{proof}

Note that this observation allows an alternate characterization of differential objects and it subsumes a key result of \cite{sman3} (theorem 4.15).


\subsection{Additional properties of differential objects}

One may wonder whether any coherence with the canonical flip $c$ should be required of differential objects (or differential bundles over $1$). In fact, some basic coherence is automatic:

\begin{proposition}\label{propCCondition}
For any differential bundle over $1$, $(A,\sigma,\zeta,\lambda)$, 
$$cT(\hat{p})\hat{p} = T(\hat{p})\hat{p}.$$
\end{proposition}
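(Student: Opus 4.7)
The plan is to exploit the biproduct structure induced on $T^2A$ by the differential object $A$ and to understand how $c$ permutes the resulting injections. Since $A$ is a differential object, $TA$ is a biproduct $A \oplus A$ in $\mbox{cMon}(\X)$ with projections $\hat{p}, p$ and injections $\lambda, 0$. As $T$ preserves products, $T^2A$ decomposes as a four-fold biproduct of $A$ with projections $T(\hat{p})\hat{p}, T(\hat{p})p, T(p)\hat{p}, T(p)p$ and injections
\[
\iota_1 := \lambda T(\lambda),\ \iota_2 := 0\, T(\lambda),\ \iota_3 := \lambda T(0),\ \iota_4 := 0\, T(0).
\]
By the universal property of the biproduct, two maps $T^2A \to A$ coincide provided they agree after pre-composition with each $\iota_i$.

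The main step is showing that $c$ fixes $\iota_1$ and $\iota_4$ while swapping $\iota_2$ with $\iota_3$. For $\iota_1$, using the differential-bundle axiom $\lambda T(\lambda) = \lambda\ell$ together with $\ell c = \ell$ from the tangent structure:
\[
\iota_1 c = \lambda T(\lambda)c = \lambda\ell c = \lambda\ell = \iota_1.
\]
For $\iota_4$, the fact that $(c,1)$ is an additive bundle morphism yields $T(0)c = 0_T = 0_{TA}$, and naturality of $0$ at $0_A$ gives $0_A \cdot 0_{TA} = 0_A T(0_A)$, so
\[
\iota_4 c = 0\, T(0)c = 0\cdot 0_{TA} = 0\, T(0) = \iota_4.
\]
For the critical swap, recall from the corollary after Lemma \ref{lemmaTDiffBundle} that $(0_A, 0_1): {\sf q} \to T({\sf q})$ is a \emph{linear} bundle morphism (here ${\sf q} = !_A: A \to 1$). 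Since the lift of $T({\sf q})$ is $T(\lambda)c$ by Lemma \ref{lemmaTDiffBundle}, linearity reads $0_A \cdot T(\lambda)c = \lambda \cdot T(0_A)$, i.e.\ $\iota_2 c = \iota_3$; the reverse $\iota_3 c = \iota_2$ then follows from $c^2 = 1$.

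Finally, a routine computation using $\lambda\hat{p} = 1_A$, $0\hat{p} = {!}\zeta$, Lemma \ref{linearityOfZeta}, and the terminality of $T(1)$, gives $\iota_1 T(\hat{p})\hat{p} = 1_A$ and $\iota_j T(\hat{p})\hat{p} = {!}\zeta$ for $j = 2, 3, 4$. The above analysis of $c$ then shows that pre-composing either $cT(\hat{p})\hat{p}$ or $T(\hat{p})\hat{p}$ with each $\iota_i$ yields the same map ($1_A$ for $i = 1$; ${!}\zeta$ otherwise), and hence the two agree by the biproduct property. The main obstacle is spotting that the swap $0\, T(\lambda)c = \lambda T(0)$ is simply the linearity of $(0_A, 0_1): {\sf q} \to T({\sf q})$; once this is recognised, the rest is straightforward bookkeeping in the biproduct.
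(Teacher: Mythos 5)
Your proof is correct, but it takes a genuinely different and considerably more conceptual route than the paper's. The paper proves the identity by direct equational calculation: it expands $1_{T(A)} = \< \hat{p}\lambda,p0\> m^T_\times T(\sigma)$, first establishes the auxiliary identity $T(\hat{p}\lambda)cT(\hat{p})\hat{p} = T(\hat{p})\hat{p}$, and then grinds through a second long computation. You instead dualize the technique the paper already uses for maps \emph{into} $T^2(A)$ (in the proof of Proposition \ref{diffObjeqdiffBun}): you present $T^2(A)$ as a four-fold biproduct of $A$ in $\mbox{cMon}(\X)$ and show that $c$ permutes the injections, fixing $\iota_1,\iota_4$ and exchanging $\iota_2,\iota_3$ --- which is exactly the ``swap the middle coordinates'' description of $c$ from Example \ref{tangent-category-examples}(ii), now established for an arbitrary differential object. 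Since $T(\hat{p})\hat{p}$ is the first projection, its invariance under $c$ is then immediate. Each step checks out: $\iota_1 c = \iota_1$ from $\lambda T(\lambda)=\lambda\ell$ and $\ell c = \ell$; $\iota_4 c = \iota_4$ from $T(0)c = 0_T$ and naturality of $0$; and the swap $\iota_2 c = \iota_3$ is indeed precisely the linearity of $(0_A,0_1)\colon {\sf q} \to T({\sf q})$ (equivalently, naturality of $0$ at $\lambda$ together with $0_Tc = T(0)$). The one point you must make explicit is that the injections of a biproduct in $\mbox{cMon}(\X)$ are jointly epic only for \emph{additive} maps out of $T^2(A)$, not for arbitrary maps of $\X$; so you need to observe that both $T(\hat{p})\hat{p}$ and $cT(\hat{p})\hat{p}$ are monoid homomorphisms from $(T^2(A),T^2(\sigma))$ to $(A,\sigma)$. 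This is easy --- $\hat{p}$ is additive by the differential object axioms, $T$ preserves additivity, and $c$ is additive for $T^2(\sigma)$ by naturality of $c$ at $\sigma$ and at the product projections --- but without it the final appeal to the biproduct property does not go through. With that line added, your argument is complete and arguably cleaner than the computation in the paper.
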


\begin{proof}
The proof is by calculation. It repeatedly uses the identity on $T(A)$ that we established in proposition \ref{diffObjeqdiffBun}:
$$1_{T(A)} = \< \hat{p},p\> \mu = \< \hat{p}\lambda,p0\>m^T_\x T(\sigma),$$  
where $m^T_\x = \<T(\pi_0,T(\pi_1)\>^{-1}$.

We start by proving an auxiliary fact:
$$T(\hat{p}\lambda)c T(\hat{p})\hat{p} = T(\hat{p})\hat{p}$$
in which it is useful to observe the following commutation:
$$\xymatrix{T_2(T(A)) \ar[d]_{+} \ar[rr]^{T_2(\hat{p})} & & T_2(A) \ar[d]^{+} \ar[rr]^{\< \pi_0\hat{p},\pi_1\hat{p} \>} & & A \x A \ar[d]^\sigma \\            
            T^2(A) \ar[rr]_{T(\hat{p})} & & T(A) \ar[rr]_{\hat{p}} && A}$$
So that $(+) T(\hat{p})\hat{p} = \< \pi_0T(\hat{p})\hat{p},\pi_1T(\hat{p})\hat{p}\>\sigma$.
Here is the calculation:
\begin{eqnarray*}
\lefteqn{T(\hat{p}\lambda)c T(\hat{p})\hat{p} } \\
& = & T(\hat{p}) \< \hat{p},p\> \mu T(\lambda) c T(\hat{p})\hat{p} \\
& = &  \< T(\hat{p})\hat{p},T(\hat{p})p\> (\lambda \x 0)m^T_\x T(\sigma\lambda) c T(\hat{p})\hat{p} \\
& = & \< T(\hat{p})\hat{p}\lambda,T(\hat{p})p0\> m^T_\x T(\lambda_2 (+)) c T(\hat{p})\hat{p} \\
& = & \< T(\hat{p})\hat{p}\lambda,T(\hat{p})p0\> m^T_\x T(\lambda_2) c_2 (+) T(\hat{p})\hat{p} \\
& = & \< T(\hat{p})\hat{p}\lambda,T(\hat{p})p0\> m^T_\x T(\lambda_2) c_2 \< \pi_0T(\hat{p})\hat{p},\pi_1T(\hat{p})\hat{p}\>\sigma \\
& = & \< T(\hat{p})\hat{p}\lambda,T(\hat{p})p0\> m^T_\x \< T(\pi_0) T(\lambda) c T(\hat{p})\hat{p},T(\pi_1) T(\lambda) c T(\hat{p})\hat{p}\> \sigma \\
& = & \< T(\hat{p})\hat{p}\lambda,T(\hat{p})p0\> m^T_\x \< T(\pi_0) T(\lambda) c T(\hat{p})\hat{p},T(\pi_1) T(\lambda) c T(\hat{p})\hat{p}\> \sigma \\
& = & \< T(\hat{p})\hat{p}\lambda T(\lambda) c T(\hat{p})\hat{p},T(\hat{p})p0 T(\lambda) c T(\hat{p})\hat{p}\>  \sigma \\
& = & \< T(\hat{p})\hat{p}\lambda T(\lambda) T(\hat{p})\hat{p},T(\hat{p})p \lambda  T(0 \hat{p})\hat{p}\>  \sigma \\
& = & \< T(\hat{p})\hat{p}\lambda \hat{p},T(\hat{p})p \lambda  T(! \zeta)\hat{p}\>  \sigma \\
& = & \< T(\hat{p})\hat{p},T(\hat{p})p \lambda  T(!) T(\zeta)\hat{p}\>  \sigma \\
& = & \< T(\hat{p})\hat{p},T(\hat{p})p \lambda  ! \zeta \>  \sigma \\
& = & T(\hat{p})\hat{p} 
\end{eqnarray*}
where $\lambda_2$ and $c_2$ are the maps defined by:
$$\xymatrix@=15pt{ & A \ar[rr]^\lambda & & T(A) \\
             & A \x A \ar[dl]_{\pi_0} \ar[ddr]^<<<<<<<<{\pi_1} \ar[u]^{\sigma} \ar[rr]^{\lambda_2} & & T_2(A)  \ar[dl]_{\pi_0} \ar[ddr]^{\pi_1} \ar[u]_{+} \\
            A \ar[rr]^\lambda|>>>>\hole \ar[ddr]_{!} && T(A) \ar[ddr]_<<<<<<<<p|\hole \\
            & & A \ar[rr]^\lambda \ar[dl]_{!} && T(A) \ar[dl]_p \\
            & 1 \ar[rr]_{\zeta} & & A}$$
$$\xymatrix@=7pt{ & T(T_2(A)) \ar[dl]_{T(\pi_0)} \ar[ddr]^<<<<<{T(\pi_1)}  \ar[rr]^{c_2} & & T_2(T(A))  \ar[dl]_{\pi_0} \ar[ddr]^{\pi_1}  \\
            T^2(A) \ar[rr]^c|>>>>\hole \ar[ddr]_{T(p)} && T^2(A) \ar[ddr]_<<<<<<<<p|\hole \\
            & & T^2(A) \ar[rr]^c \ar[dl]_{T(p)} && T^2(A) \ar[dl]_p \\
            & T(A) \ar@{=}[rr] & & T(A)}$$
A useful observation for the next calculation is that the following diagram commutes:
$$\xymatrix{T^2(A \x A) \ar[rr]^{T^2(\sigma)} \ar[d]_{T(\<T(\pi_0),T(\pi_1)\>)} && T^2(A) \ar[dd]^{T(\hat{p})} \\
                    T(T(A) \x T(A)) \ar[d]_{T(\hat{p} \x \hat{p})} \\
                     T(A \x A) \ar[rr]^{T(\sigma)} \ar[d]_{\<T(\pi_0),T(\pi_1)\>} & & T(A) \ar[dd]^{\hat{p}} \\
                     T(A) \x T(A) \ar[d]_{\hat{p} \x \hat{p}} \\
                      A \x A \ar[rr]^{\sigma} & & A}$$
It then follows that $T^2(\sigma) T(\hat{p}) \hat{p} = \< T^2(\pi_0)T(\hat{p})\hat{p},T^2(\pi_1)T(\hat{p})\hat{p} \>\sigma$.

Now we use all the above in:
\begin{eqnarray*}
&   & \lefteqn{cT(\hat{p})\hat{p}} \\
& = & T(\<\hat{p}\lambda,p0\> m^T_\x T(\sigma)) c T(\hat{p})\hat{p} \\
& = & \< T(\hat{p}\lambda),T(p0) \> m^T_\x T(m^T_\x) c T^2(\sigma) T(\hat{p})\hat{p} \\
& = & \< T(\hat{p}\lambda),T(p0) \> m^T_\x T(m^T_\x) c  \< T^2(\pi_0)T(\hat{p})\hat{p},T^2(\pi_1)T(\hat{p})\hat{p}\> \sigma \\
& = & \< T(\hat{p}\lambda),\!T(p0) \>  \<  m^T_\x T(m^T_\x) c T^2(\pi_0)T(\hat{p})\hat{p},\!m^T_\x T(m^T_\x) cT^2(\pi_1)T(\hat{p})\hat{p}\> \sigma \\
& = & \< T(\hat{p}\lambda),T(p0) \>  \<  \pi_0 c T(\hat{p})\hat{p}, \pi_1 c T(\hat{p})\hat{p}\> \sigma \\
& = & \<  T(\hat{p}\lambda) c T(\hat{p})\hat{p}, T(p0) c T(\hat{p})\hat{p}\> \sigma \\
& = & \<  T(\hat{p})\hat{p}, T(p) T(0\hat{p})\hat{p}\> \sigma \\
& = & \<  T(\hat{p})\hat{p}, T(p) T(! \zeta)\hat{p}\> \sigma \\
& = &  \<  T(\hat{p})\hat{p}, T(p) ! \zeta \> \sigma \\
& = & T(\hat{p})\hat{p}.
\end{eqnarray*}
\end{proof}

From \cite{sman3}, if $(A, \sigma_A,\zeta_A,\hat{p}_A)$ and $(B, \sigma_B,\zeta_B,\hat{p}_B)$ are differential objects then $f: A \to B$ is  said to be {\bf differentially linear} if $T(f)\hat{p}_B = \hat{p}_Af$.  We observe:

\begin{proposition}\label{linearDiffOver1}
Linear morphisms between differential bundles over $1$ are the same as maps which are differentially linear.
\end{proposition}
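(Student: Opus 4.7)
The plan is to exploit the correspondence of Proposition \ref{diffObjeqdiffBun}, which identifies the differential-object projection $\hat{p}$ with the bracket $\{1_{T(A)}\}$ and the differential-bundle lift $\lambda$ with the pairing $\<1,!\zeta\>$ determined by the biproduct decomposition $T(A) \cong A \x A$ with projections $\<\hat{p},p\>$. I would first observe that for a differential bundle over the terminal object, any map $x: X \to T(E)$ trivially equalizes $T(q)$ and $pq0$, since both factor through $1$; thus the bracketing operation $\{-\}$ is always defined on such maps.

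For the forward direction, assume $(f,1_1): {\sf q} \to {\sf q}'$ is a linear bundle morphism. I would compute $\hat{p}f = \{1_{T(E)}\}f$ and apply Lemma \ref{lemmaLift}(ii) with $x = 1_{T(E)}$ and the linear morphism $(f,1_1)$ to obtain $\{T(f)\}$. Independently, applying Lemma \ref{lemmaLift}(i) with $k = T(f)$ to $\{1_{T(E')}\} = \hat{p}'$ gives $T(f)\hat{p}' = \{T(f)\}$. Comparing the two yields $\hat{p}f = T(f)\hat{p}'$, i.e.\ differential linearity.

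For the converse, assume $T(f)\hat{p}' = \hat{p}f$. The equation $f\lambda' = \lambda T(f)$ can be verified projection-wise, since $T(A')$ is a product under $\<\hat{p}',p'\>$. Post-composition with $\hat{p}'$ yields $f$ on both sides (from $\lambda'\hat{p}' = 1$, $\lambda\hat{p}=1$, and the hypothesis). Post-composition with $p'$ yields $!\zeta'$ on the left (from $\lambda'p' = !\zeta'$), and on the right, using naturality $T(f)p' = pf$ and $\lambda p = !\zeta$, gives $!\zeta f$. These agree provided $\zeta f = \zeta'$, which I would extract from the hypothesis by precomposing it with $\zeta 0 : 1 \to T(A)$: naturality of $0$ turns the left side into $\zeta f \cdot !\zeta' = \zeta'$, while $0\hat{p} = !\zeta$ together with $\zeta \cdot ! = 1_1$ turns the right side into $\zeta f$.

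The main obstacle is precisely this last step. Differential linearity is stated as a single equation about $\hat{p}$, whereas linearity as a bundle morphism implicitly encodes zero-preservation through $\lambda p = !\zeta$; extracting $\zeta f = \zeta'$ requires the small trick of evaluating the hypothesis at the composite $\zeta 0$, after which the biproduct characterization of $T(A)$ makes the equivalence mechanical.
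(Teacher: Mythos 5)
Your proposal is correct, and it reaches the result by a route that is recognizably different from, though closely parallel to, the paper's. The paper's forward direction invokes Lemma \ref{mulambda} to get $\mu T(f) = (f \x f)\mu'$, inverts $\mu$ as $\<\hat{p},p\>$, and post-composes with $\hat{p}'$; your forward direction instead runs entirely through the bracket calculus, computing $\hat{p}f = \{1\}f = \{T(f)\} = T(f)\{1\} = T(f)\hat{p}'$ via Lemma \ref{lemmaLift}(i) and (ii) — this is shorter and makes the "projection is a bracket" identification do all the work, at the cost of having to note (as you correctly do) that over $1$ the equalizing hypotheses of Lemma \ref{lemmaLift} are vacuous since $T(1)$ is terminal. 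For the converse, the paper again routes through Lemma \ref{mulambda}, verifying the $\mu$-identity componentwise together with zero-preservation, whereas you verify the defining equation $f\lambda' = \lambda T(f)$ directly against the two projections $\hat{p}'$ and $p'$ of the product $T(A')$; the two arguments are essentially transposes of one another under the biproduct decomposition. Your extraction of $\zeta f = \zeta'$ by precomposing the hypothesis with $\zeta 0$ is sound and is a minor variant of the paper's step, which instead post-composes the naturality square $f0 = 0T(f)$ with $\hat{p}'$ and cancels the split epi $!$. Both directions of your argument check out; what the paper's version buys is reusability of Lemma \ref{mulambda} elsewhere, while yours buys a more self-contained and slightly more economical computation.
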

\begin{proof}
If $f: A \to A'$ is linear bundle morphism and $\hat{p} = \{1\}$ is the first projection for the differential bundle, then $\<\hat{p},p\>$ is the inverse of $\mu$; 
asking that $f$ be a linear bundle morphism implies by lemma \ref{mulambda} that:
	\[ \mu T(f) = (f \times f)\mu' \mbox{ or } T(f) = \<\hat{p},p\>(f \times f)\mu' \]
Post-composing by $\hat{p}$ then gives
	\[ T(f)\hat{p} = \<\hat{p}f,pf\>\mu'\hat{p} = \<\hat{p}f,pf\>\pi_0 = \hat{p}f. \]
Conversely, suppose we have a differentially linear map; that is, $T(f)\hat{p} = \hat{p}f$.  Then the above proves that
	\[ \mu T(f) = (f \times f)\mu' \]
on the first component, and on the second component, asking $T(f)p = \<\hat{p},pf\>\mu p = \<\hat{p},pf\>\pi_1 =  pf$ is just naturallity of $p$.  However, we also have that $f$ preserves the zeroes of the bundles, since
	\[ f0 = 0T(f) \Rightarrow f0\hat{p} = 0T(f)\hat{p} \Rightarrow ! \zeta' = ! \zeta f \]
so since $!$ is epic, $\zeta' = \zeta f$.  Thus by lemma \ref{mulambda}, $f$ is a linear differential bundle morphism.  
\end{proof}


\subsection{Differential objects in representable tangent categories}\label{sectionRepDiffObjects}

One aspect of differential objects that was not investigated in \cite{sman3} was what form they take when the tangent category is representable; that is, when there is an object $D$ for which $TM = M^D$.  Section 5 of \cite{sman3} discusses what conditions on an object $D$ are required so that defining $TM = M^D$ gives a tangent category.  In particular, $D$ has:
\begin{itemize}
	\item maps $\odot: D \x D \to D$ and $\wp: 1 \to D$ (``comultiplication'' and ''zero'');
	\item the pushout of $1 \to^{\wp} D$ along itself exists; this pushout will be denoted by $D \star D$, with injections $\imath_0: D \to D \star D, \imath_1: D \to D \star D$;
	\item a map $\delta: D \to D \star D$.  
\end{itemize}
satisfying various axioms (see definition 5.6 of \cite{sman3}).  Such a $D$ is referred to as an ``infinitesimal'' object.    

Now, suppose that we have a differential object $A$ in a representable tangent category with corresponding infinitesimal object $D$.  By the previous section, $A$ is a differential bundle over 1 and hence has a lift map $\lambda: A \to TA$.  But since $TA = A^D$, this is equivalent to giving an ``action'' $D \times A \to A$.  The following theorem characterizes the differential object axioms in terms of this action, written in the term logic used in section 5 of \cite{sman3}.  

\begin{theorem}
Suppose that $(\X,\T)$ is a representable tangent category with infinitesimal object $D$.  Then to give a differential object is equivalent to giving an ``infinitesimal module'': a commutative monoid $(A,\sigma,\zeta)$ with a map $\act: D \times A \to A$ so that:
\begin{enumerate}[(i)]
	\item $\wp \act a = \zeta$.
	\item $d \act \zeta = \zeta$.
	\item $d \act \sigma(a_1,a_2) = \sigma(d \act a_1, d \act a_2)$.
	\item $\sigma(d \act a_1, d \act a_2) = \left\{ \begin{array}[c]{lcl} \imath_0(d) & \mapsto & d \act a_1 \\
                                                             \imath_1(d) & \mapsto & d \act a_2
                                       \end{array} \right\} \delta(d)$
	\item $d_1 \act (d_2 \act a) = (d_1 \odot d_2) \act a$.
	\item The map $(a_1,a_2) \mapsto \lambda d.\sigma(d \act a_1,a_2)$ is invertible (ie., for each $f: D \to A$, there exists unique $a,b \in A$ so that $f(d) = db + a$).  
\end{enumerate}
\end{theorem}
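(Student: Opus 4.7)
My plan is to exploit Proposition \ref{diffObjeqdiffBun}, which identifies differential objects with differential bundles over $1$, so that what must be proved is a dictionary between the differential-bundle axioms from Definition \ref{defnDiffBundles} (specialized to $q = !_A : A \to 1$) and the six ``module-like'' clauses of the statement. The translation engine is the exponential transpose: since $T(A) = A^D$, giving $\lambda : A \to T(A)$ is the same as giving a map $\act : D \times A \to A$ via $\lambda(a)(d) = d \act a$, and this bijection is the central device for the whole proof. I would first record how each of the structural pieces of tangent structure looks in representable form, in the notation of \cite{sman3} section 5: $p : TA \to A$ is evaluation at $\wp$, $0 : A \to TA$ is the constant map at $\act$-independent value, $+ : T_2A \to TA$ uses the pushout $D \star D$ and the comultiplication $\delta$ via $+(f_1,f_2) = \{\imath_0(d)\mapsto f_1(d),\,\imath_1(d)\mapsto f_2(d)\}\delta(d)$, and the vertical lift $\ell : TA \to T^2 A = A^{D\times D}$ is (up to convention) the transpose of $\odot$, namely $\ell(f)(d_1,d_2) = f(d_1 \odot d_2)$.

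With this dictionary in hand, each axiom of a differential bundle translates directly. The commuting square $\lambda p = !\,\zeta$ of the bundle morphism $(\lambda,\zeta): q \to p$ becomes $\wp \act a = \zeta$, which is (i). The zero-preservation $\zeta \lambda = \zeta 0$ (equivalently $\zeta \lambda = 0\,T(\zeta)$ for the morphism $(\lambda,0): q \to T(q)$) becomes $d \act \zeta = \zeta$, which is (ii). Preservation of addition by $(\lambda,0): q \to T(q)$ — i.e.\ $\sigma\lambda = \lambda_2 T(\sigma)$ — translates pointwise in $D$ to $d \act \sigma(a_1,a_2) = \sigma(d \act a_1, d\act a_2)$, which is (iii). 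Preservation of addition by the other morphism $(\lambda,\zeta): q \to p$ — i.e.\ $\sigma\lambda = \lambda_2(+)$ — unfolds through the $\delta$-description of $+$ to give axiom (iv) (and once (iii) is in place the two sides of (iv) are both equal to $d\act\sigma(a_1,a_2)$, so no redundancy is introduced). The coherence $\lambda\ell = \lambda T(\lambda)$ becomes, after transposing twice, $(d_1 \odot d_2)\act a = d_2 \act (d_1 \act a)$, which is (v) up to the naming of the two variables. Finally, the universality of lift, which demands that $\mu = \<\pi_0\lambda, \pi_1 0\>T(\sigma) : A \times A \to T(A)$ be a pullback of $T(q)$ along $0$, collapses to ``$\mu$ is an isomorphism'' because $T(1) = 1^D = 1$ and the base square becomes vacuous; transposing $\mu$ yields exactly $(a_1,a_2) \mapsto \lambda d.\,\sigma(d\act a_1, a_2)$, so this is (vi).

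The reverse direction is then essentially automatic: starting from an infinitesimal module $(A,\sigma,\zeta,\act)$ one defines $\lambda$ as the transpose of $\act$, and each of the differential-bundle axioms is recovered from the corresponding module axiom by running the same equivalence backwards; the exponential transpose being a bijection guarantees that the two constructions are mutually inverse on the nose.

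The main obstacle I anticipate is bookkeeping around $\ell$ and $+$. The representable forms of these transformations are not stated in the excerpt; one has to extract them from Definition 5.6 of \cite{sman3}, and the precise convention (for example whether $\ell$ uses $\odot$ or its ``swap'') dictates whether axiom (v) appears as written or with the arguments of $\odot$ transposed. A related subtlety is verifying that the universal property of $D \star D$ really does give $+$ the $\delta$-description used above, so that the preservation-of-addition equation for $(\lambda,\zeta)$ literally is axiom (iv); once that identification is pinned down, the rest of the translation is routine.
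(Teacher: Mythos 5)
Your proposal is correct and follows exactly the route the paper takes: the paper's entire proof is the remark that the result is ``a straightforward application of uncurrying the differential bundle axioms,'' and your dictionary (via Proposition \ref{diffObjeqdiffBun} and the transpose $\lambda(a)(d) = d \act a$) is precisely that uncurrying, carried out axiom by axiom with the correct matches, including the correct observation that universality of the lift degenerates to invertibility of $\mu$ because $T(1)=1$.
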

\begin{proof}
The result is a straightforward application of uncurrying the differential bundle axioms.
\end{proof}

If the representable tangent category is in fact a model of SDG, then more can be said.  Recall that in SDG one has a ``line object $R$'', and it is often useful to consider ``Euclidean $R$-modules'': $R$-modules $A$ with the property that for any $f: D \to A$ there exists unique $b \in A$ such that $f(b) = db + a$ (for example, see \cite{lavendhomme}, pg. 5).  
	
\begin{theorem}\label{diffBundlesInSDG}
Suppose that $\X$ is a model of SDG with line object $R$.  Then to give a differential object in the corresponding tangent category $(\X,\T)$ is precisely to give a Euclidean $R$-module. 
\end{theorem}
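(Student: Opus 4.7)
The plan is to invoke the previous theorem to reduce the claim to a comparison between infinitesimal modules (in the sense of the theorem just above) and Euclidean $R$-modules in the given model of SDG. Concretely, the preceding theorem gives a bijection between differential objects in $(\X,\T)$ and infinitesimal modules: commutative monoids $(A,\sigma,\zeta)$ equipped with an action $\act : D \times A \to A$ satisfying the six axioms listed. So it suffices to match this data with the structure of a Euclidean $R$-module, namely an $R$-module $A$ such that for every $f : D \to A$ there exist unique $a,b \in A$ with $f(d) = a + db$.

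For the backward direction (Euclidean $R$-module to infinitesimal module), I would simply restrict scalar multiplication along the inclusion $D \hookrightarrow R$ to define $d \act a := da$. Axioms (i)--(v) then reduce to standard $R$-module identities (with (v), $d_1 \act (d_2 \act a) = (d_1 d_2) \act a$, using associativity together with the identification $d_1 \odot d_2 = d_1 d_2$ in $R$); and axiom (vi), asserting that $(a_1,a_2) \mapsto \lambda d.\,\sigma(d \act a_1, a_2)$ is invertible, is literally the Euclidean property (after renaming, reading $a_2$ as the constant term and $a_1$ as the principal part).

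For the forward direction (infinitesimal module to Euclidean $R$-module), the task is to construct an $R$-action extending $\act$. Given $a \in A$, the map $\phi_a : D \to A$, $d \mapsto d \act a$, is an element of $A^D$ whose value at $0$ is $\zeta$ by axiom (i). Axiom (vi) says that every such map factors uniquely through the "tangent line" at $\zeta$ with principal part $a$, and also identifies $A \times A$ with $A^D$ via $\sigma(d \act a_1, a_2)$; this is precisely the local linearity needed. To extend $\act$ along $D \hookrightarrow R$, for each $a \in A$ I would define $r \cdot a$ by invoking the Kock--Lawvere axiom for $R$ together with axiom (vi): axiom (v) ensures that $d \mapsto (rd) \act a$ is again an admissible $D$-indexed family, and its unique principal part provides $r \cdot a$. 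Axioms (ii)--(v) then yield distributivity on both sides and associativity of scalars, giving an $R$-module structure whose restriction to $D$ recovers $\act$; axiom (vi) then immediately states that $A$ is Euclidean.

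The main obstacle is the forward direction, and specifically verifying that the construction of scalar multiplication by $R$ is well-defined and satisfies the $R$-module axioms globally rather than merely infinitesimally. The standard way to overcome this (and the route I would take) is to exploit that in a model of SDG all exponentials by $D$ behave as expected and every object is microlinear, so that the Kock--Lawvere property for $R$ transfers along the action to $A$, making the extension from $D$-scalars to $R$-scalars essentially forced by axioms (v) and (vi). With this in hand, the two translations are mutually inverse on the nose, proving the equivalence.
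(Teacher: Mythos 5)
Your proposal takes essentially the same route as the paper: both reduce the statement to the preceding characterization of differential objects as infinitesimal modules, obtain the $D$-action from a Euclidean $R$-module by restricting scalar multiplication along $j: D \to R$, and recover the $R$-action from an infinitesimal module by taking the principal part (i.e.\ applying $\hat{p}$, using axiom (vi)) of the map $d \mapsto (rd) \act a$ --- which is exactly the paper's composite $R \times A \to D^D \times A^D \to A^D \to A$. The one place where you are too quick is the claim that axioms (i)--(v) all ``reduce to standard $R$-module identities'': the paper singles out axiom (iv) as precisely the one that does \emph{not} correspond to any Euclidean $R$-module axiom, since it is stated in terms of the abstract pushout $D \star D$ and the map $\delta$; verifying it requires the SDG-specific identification of $A^{D \star D}$ with $A^{D(2)}$ (so that $\delta$ becomes $d \mapsto (d,d)$), after which it follows from distributivity (axiom (iii)). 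This is a small omission in the verification rather than a flaw in the approach, and your general appeal to microlinearity points in the right direction without isolating the axiom that actually needs it.
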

\begin{proof}
Let $D$ be the corresponding infinitesimal object of $R$.  By the theorem above, we need to show how to translate between an infinitesimal module and an $R$-module.  Recall that in SDG the object $R$ is the pullback
\[
\bfig
	\square[R`D^D`1`D;i``p`\wp]
\efig
\]
and there is an inclusion $j$ of $D$ into $R$ (which is given by the unique map $D \to R$ given by currying the multiplication $\odot: D \times D \to D$ and using the above pullback description of $R$).  

Now, given an infinitesimal module $A$ we can define an action of $R$ by the composite
	\[ R \times A \to^{^{i \times \lambda}} D^D \times A^D \to^{\circ} A^D \to^{\hat{p}} A \]
Conversely, given a Euclidean $R$-module $A$ with action $m: R \times A \to A$, we can define an action of $D$ on $R$ by
	\[ D \times A \to^{j \times 1} R \times A \to^m A \]
It is now straightforward to check that the axioms for an infinitesimal module translate directly to the axioms for a Euclidean $R$-module and vice versa, with one exception: axiom (iv) for an infinitesimal module.  This axiom is equivalent to the differential bundle axiom 
\[
\bfig
	\square[A \times A`A`T_2A`TA;\sigma`\<\pi_0 \lambda,\pi_1 \lambda\>`\lambda`+]
\efig
\]
This does not come directly from any axiom for a Euclidean $R$-module, but in fact is automatic given the other axioms.  Indeed, while in an arbitrary tangent category $T_2(A)$ is given by $A^{D \star D}$, in a model of SDG, $T^2(A)$ is also given by $A^{D(2)}$, where $D(2) = \{(d_1,d_2) \in D \times D: d_1d_2 = 0\}$.  In this case, the map $\<\pi_0 \lambda,\pi_1 \lambda\>$ is given by the map
	\[ (a_1,a_2) \mapsto \lambda(d_1,d_2).\sigma(d_1a_1,d_2a_2) \]
since post-composing this map by the projections $\pi_0, \pi_1: T_2A \to TA$ gives $\pi_0 \lambda$ and $\pi_1 \lambda$.  Then since $+$ is given by $\delta: D \to D(2): d \mapsto (d,d)$, $\<\pi_0 \lambda,\pi_1 \lambda\>+$ is simply
	\[ (a_1,a_2) \mapsto \lambda d.\sigma(da_1,da_2) \]
which then by (iii) equals $d.(d \act \sigma(a_1,a_2))$, as required.  
\end{proof}


\subsection{Cartesian differential categories}\label{secCartDiffCats}

Recall that a Cartesian differential category \cite{cartDiff} is a left-additive category\footnote{These are examples of so called {\em skew\/} enriched categories following \cite{Street}.  Concretely, this means each homset is a commutative monoid and with respect to composition satisfies the left distribution law -- $f(h+k) = fh+fk$ and $f0 = 0$ -- but not necessarily the right distribution law.} with a combinator 
$$\infer{X \x X \to_{D(f)} Y}{X \to^f Y}$$
(called ``differentiation'') satisfying certain axioms.  Here, we give an alternative form of the axioms as explained in proposition 4.2 of \cite{sman3}.  This form of the axioms allows one to align tangent structure more conveniently with differential structure.  
\begin{enumerate}[{\bf [CD.1]}]
\item $D(f+g) = D(f)+D(g)$ and $D(0)=0$;
\item $\< a+b,c\>D(f) = \< a,c\>D(f) + \< b,c\>D(f)$ and $\< 0,a\>D(f) = 0$;
\item $D(\p_0) = \p_0\p_0$, and $D(\p_1)= \pi_0\p_1$;
\item $D(\<f,g\>) = \< D(f),D(g)\>$;
\item $D(fg) = \<D(f),\p_1f \>D(g)$;
\item $\<\<a,0\>,\<0,d\>\>D(D(f)) = \<a,d\>D(f)$;
\item $\< \< a,b\>,\< c,d\>\> D(D(f)) = \<\< a,c\>,\< b,d\>\> D(D(f))$;
\end{enumerate}

As noted earlier, a Cartesian differential category is always a tangent category with $T(A) := A \x A$ and $T(f) = \<D(f),\pi_1f\>$.  Furthermore, it is straightforward to check that every object in a Cartesian differential category is, with respect to the  structure ${\sf A} := ( !_A, 0,\pi_0+\pi_1, \< 1,0\>)$, 
canonically a differential bundle over the final object.   For this, in particular, one must check the universal property of the lift, that the diagrams $(1)$,$(2)$,$(3)$ and $(6)$ in the proof of Proposition 
\ref{diffObjeqdiffBun} commute, and the coherence with the lift is satisfied. 

The results of the previous section imply that when the lift is preserved by $f: A \to B$ then $f$ should be linear in the sense that $D[f]= \pi_0f$.  
For a Cartesian differential category, we may verify this with a direct proof.  The preservation of the lift tells us 
$$\< 1,0\> T(f) = \< \<1,0\>D[f],0f\> = \<f,0\>$$ 
implying that $0f = 0$ and $\< 1,0\>D[f] = f $. 
We want to prove that $D[f]= \pi_0f$.  As we see below, it actually suffices to know that $\< 1,0\>D[f] = f$. Here is the calculation:
\begin{eqnarray*}
D[f] & = & D[\<1,0\>D[f]] = \<D[\<1,0\>],\pi_1\<1,0\>\>D[D[f]] \\
& = & \<\<\pi_0,0\>,\<\pi_1,0\>\>D[D[f]] = \< \pi_0,0 \>D[f] \\
& = & \pi_0\< 1,0 \>D[f] = \pi_0f
\end{eqnarray*}


\subsection{Coherent differential structure}

To determine, conversely, whether a Cartesian tangent category is, in fact, a Cartesian differential category one must clearly be able to assign to each object a ``canonical'' differential bundle structure.   There is no reason, however, why an arbitrary assignment of such structures will allow one to reconstruct a Cartesian differential structure for the category which is compatible with the existing tangent structure.  Clearly some additional compatibility is required to ensure the choice of bundle structure at each object is compatible with the Cartesian tangent structure.  

\begin{definition}\label{defnCoherentDiffStructure}
A Cartesian tangent category has {\bf coherent differential structure} if every object $A$ has an associated structure as a differential bundle over $1$,  
${\sf A} = (!_A,\zeta_A,\sigma_A,\lambda_A)$, which we refer to as its {\bf canonical} differential structure, such that:
\begin{enumerate}[{\bf [CDS.1]}]
\item the canonical differential structure on $A \x B$ is the product of the canonical structures of its components:
           $$\hspace{-0.75cm}{\sf A} \x {\sf B} =\left( \begin{array}[c]{l} !:A \x B \to 1, \\
                                                    \zeta_{A\x B} = \<\zeta_A,\zeta_B\>: 1\to A\x B, \\
                                                    \sigma_{A\x B}= {\sf ex}(\sigma_A \x \sigma_B): (A \x B) \x (A \x B) \to A \x B, \\
                                                    \lambda_{A \x B} = \< \lambda_A,\lambda_B \>: A \x B \to T(A\x B) \end{array} \right)$$
\item the canonical differential structure on $T(A)$ is given by:
            $$T({\sf A}) = \left( \begin{array}[c]{l} !:T(A) \to 1, \\
                                                     \zeta_{T(A)} = m^T_1T(\zeta_A): 1 \to T(A), \\
                                                     \sigma_{T(A)} = m_\x^TT(\sigma_A): T(A) \x T(A) \to T(A), \\
                                                     \lambda_{T(A)} = T(\lambda_A)c: T(A) \to T^2(A) \end{array} \right)$$
\end{enumerate}
where $m^T_1:= !_{T(1)}^{-1}$ and $m_\x^T := \<T(\pi_0),T(\pi_1)\>^{-1}$.  
\end{definition}

Our first objective is to translate these conditions into equivalent requirements for differential (object) structure.  Preliminary to this 
we prove two useful equations which we shall use repeatedly in the proofs below:

\begin{lemma} \label{muproj}
In a Cartesian tangent category with a coherent differential structure 
	\[ \mu_{A \x B} T(\pi_0) = (\pi_0 \x \pi_0) \mu_A \mbox{ and } \mu_{A \x B} T(\pi_1) = (\pi_1 \x \pi_1) \mu_B. \]
\end{lemma}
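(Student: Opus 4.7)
The plan is to prove only the first equation — the second is symmetric — and to do so by unfolding the definition of $\mu$ and pushing $T(\pi_0)$ inward, using the clauses of CDS.1 together with naturality of $0$ and the fact that $T$ preserves products. Recall that for the canonical differential bundle ${\sf A}\times{\sf B}$ over $1$,
\[
\mu_{A\times B} \;=\; \langle \pi_0 \lambda_{A\times B},\, \pi_1\, 0_{A\times B}\rangle\, T(\sigma_{A\times B}),
\]
so it will suffice to compute $T(\sigma_{A\times B})T(\pi_0)$, $\lambda_{A\times B}T(\pi_0)$, and $0_{A\times B}T(\pi_0)$ in terms of the $A$-side data and then reassemble.

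The first step is to unravel the definition $\sigma_{A\times B} = \mathsf{ex}(\sigma_A\times\sigma_B)$ given in CDS.1. A direct check in the product shows $\sigma_{A\times B}\pi_0 = (\pi_0\times\pi_0)\sigma_A$, so applying $T$ and using that $T$ preserves products (so $T(\pi_0\times\pi_0) = T(\pi_0)\times T(\pi_0)$ up to the product-preserving iso $m_\times^T$) gives $T(\sigma_{A\times B})T(\pi_0) = (T(\pi_0)\times T(\pi_0))T(\sigma_A)$. Next I invoke CDS.1 once more: by the description of $\lambda_{A\times B}$ as $\langle\lambda_A,\lambda_B\rangle$ (via $m_\times^T$), postcomposition with $T(\pi_0)$ yields $\lambda_{A\times B}T(\pi_0) = \pi_0\lambda_A$. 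Finally, naturality of $0$ provides $0_{A\times B}T(\pi_0) = \pi_0\, 0_A$.

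Assembling these three computations and using the pairing-composition law for $\langle\,,\rangle$ with $T(\pi_0)\times T(\pi_0)$, I obtain
\[
\mu_{A\times B}T(\pi_0) = \langle \pi_0\pi_0\lambda_A,\, \pi_1\pi_0\, 0_A\rangle T(\sigma_A) = (\pi_0\times\pi_0)\langle \pi_0\lambda_A, \pi_1\, 0_A\rangle T(\sigma_A) = (\pi_0\times\pi_0)\mu_A,
\]
as required. The argument for $T(\pi_1)$ is obtained by swapping the roles of $A$ and $B$ throughout.

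This is essentially pure bookkeeping, so there is no real obstacle; the only point worth care is the identification of $T(\pi_0\times\pi_0)$ with $T(\pi_0)\times T(\pi_0)$ via $m_\times^T$, which is why CDS.1 was stated with the product-preserving isomorphism explicit. I do not expect any appeal beyond CDS.1, the definition of $\mu$, naturality of $0$, and product preservation under $T$.
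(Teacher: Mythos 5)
Your proposal is correct and follows essentially the same route as the paper's own proof: both unfold $\mu_{A\x B}$, use the \textbf{[CDS.1]} descriptions of $\sigma_{A\x B}$ and $\lambda_{A\x B}$, push $T(\pi_0)$ through the product-preservation isomorphism $m^T_\x$, and finish with naturality of $0$; the only difference is notational (pairings versus the $\x$ notation). No gaps.
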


\begin{proof} We shall prove the first:
\begin{eqnarray*}
\mu_{A \x B} T(\pi_0) & = & ((\lambda_A \x \lambda_B) m^T_\x \x 0) m^T_\x T({\sf ex}(\sigma_A \x \sigma_B)) \pi_0 \\
& = & ((\lambda_A \x \lambda_B) m^T_\x \x 0) m^T_\x T(\pi_0 \x \pi_0) T(\sigma_A) \\
& = & (((\lambda_A \x \lambda_B) m^T_\x T(\pi_0)) \x (0T(\pi_0))) m^T_\x  T(\sigma_A) \\
& = & ((\pi_0 \lambda_A) \x (\pi_0 0)) m^T_\x  T(\sigma_A) \\
& = & (\pi_0 \x \pi_0) (\lambda_A \x 0) m^T_\x T(\sigma_A) \\
& = & (\pi_0 \x \pi_0) \mu_A
\end{eqnarray*}
the other identity is similar.  
\end{proof}

\begin{proposition} \label{candiff}
A Cartesian tangent category has coherent differential structure if and only if each object has an associated differential structure such that
\begin{enumerate}[{\bf [CDS.1]$^\prime$}]
\item $A \x B$ has coherent differential structure satisfying the following equations:
       $$\zeta_{A\x B} \!=\! \<\zeta_A,\!\zeta_B\>, \sigma_{A\x B}\!= \!{\sf ex}(\sigma_A \x \sigma_B), \hat{p}_{A \x B} \!=\! \<T(\pi_0)\hat{p}_A,\!T(\pi_1)\hat{p}_B \>$$
\item $T(A)$ has coherent structure satisfying the following equations:
       $$\zeta_{T(A)} = m^T_1 T(\zeta_A),~~ \sigma_{T(A)} = m^T_\x T(\sigma_A), ~~ \hat{p}_{T(A)} = cT(\hat{p}_A)$$
\end{enumerate}
\end{proposition}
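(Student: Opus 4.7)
The plan is to invoke Proposition \ref{diffObjeqdiffBun}, which gives a bijective correspondence between differential bundles over $1$ and differential objects. Under that correspondence $\hat{p} = \{1\}$ (where $\{-\}$ is the bracket of the bundle) and $\lambda = \<1,!\zeta\>$, where the pairing is with respect to the biproduct projections $(\hat{p},p)\!:\! T(E) \to E \x E$. The $\zeta$ and $\sigma$ clauses in CDS and CDS$'$ are already identical, so the substance of the proof is to establish that each $\lambda$-equation in CDS is equivalent to the corresponding $\hat{p}$-equation in CDS$'$.

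For CDS.1, assume first that $\lambda_{A \x B} = \<\lambda_A,\lambda_B\>$, interpreted to mean $\lambda_{A \x B}T(\pi_i) = \pi_i\lambda_{A/B}$ for $i = 0,1$. This is exactly the statement that each projection $(\pi_i,!)\!:\! {\sf A} \x {\sf B} \to {\sf A/B}$ is a linear bundle morphism. Applying Lemma \ref{lemmaLift}(ii) with $x = 1_{T(A \x B)}$ then gives $\hat{p}_{A \x B}\pi_i = \{1\}\pi_i = \{T(\pi_i)\}_{{\sf A/B}} = T(\pi_i)\hat{p}_{A/B}$, which is the CDS.1$'$ equation. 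Conversely, assuming $\hat{p}_{A \x B} = \<T(\pi_0)\hat{p}_A,T(\pi_1)\hat{p}_B\>$, I would verify, using $\lambda = \<1,!\zeta\>$ with respect to $(\hat{p},p)$ together with the (already-shared) equation $\zeta_{A \x B} = \<\zeta_A,\zeta_B\>$, that the composites of $\lambda_{A \x B}T(\pi_i)$ and $\pi_i\lambda_{A/B}$ with $\hat{p}_{A/B}$ and $p_{A/B}$ coincide; since $(\hat{p}_{A/B}, p_{A/B})$ is jointly invertible, this forces $\lambda_{A \x B}T(\pi_i) = \pi_i\lambda_{A/B}$.

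For CDS.2, the key tool is Lemma \ref{lemmaBracketT0}. Assuming $\lambda_{T(A)} = T(\lambda_A)c$, the canonical differential structure on $T(A)$ over $1$ coincides, via the iso $T(1) \cong 1$ (preserved because $T$ preserves the terminal object), with $T({\sf A})$ from Lemma \ref{lemmaTDiffBundle}; the CDS.2 formulae $\zeta_{T(A)} = m_1^T T(\zeta_A)$ and $\sigma_{T(A)} = m_\x^T T(\sigma_A)$ are precisely the bookkeeping that makes this identification match. Hence the brackets of the two bundles agree, and Lemma \ref{lemmaBracketT0} gives $T(\hat{p}_A) = T(\{1\}) = \{c\}_{T(A)}$. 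Then Lemma \ref{lemmaLift}(i) yields $c\hat{p}_{T(A)} = c\{1\} = \{c\}_{T(A)} = T(\hat{p}_A)$, and precomposing with $c$ (which is self-inverse) delivers $\hat{p}_{T(A)} = cT(\hat{p}_A)$. For the converse, assuming $\hat{p}_{T(A)} = cT(\hat{p}_A)$, the map $T(\lambda_A)c$ can be shown to satisfy the two equations $T(\lambda_A)c \cdot cT(\hat{p}_A) = T(\lambda_A\hat{p}_A) = 1$ and $T(\lambda_A)c \cdot p_{T(A)} = T(\lambda_A p_A) = T(!_A\zeta_A) = !_{T(A)}m_1^T T(\zeta_A)$ that uniquely characterize $\lambda_{T(A)}$ as the pairing into $T^2(A)$ through the iso $(\hat{p}_{T(A)}, p_{T(A)})$; hence $\lambda_{T(A)} = T(\lambda_A)c$.

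The main technical nuisance I anticipate is the careful identification of $T({\sf A})$, which is a bundle over $T(1)$, with the canonical differential structure on $T(A)$ over $1$; this is needed in order to apply Lemma \ref{lemmaBracketT0} in the CDS.2 argument. Once this identification is made explicit and shown to be compatible with the bracket, the rest reduces to mechanical applications of the bracket identities in Lemma \ref{lemmaLift}.
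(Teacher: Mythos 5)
Your proposal is correct and follows essentially the same route as the paper: both directions of {\bf [CDS.2]}$'$ are handled by exactly the computations in the paper's proof (characterizing $\lambda_{T(A)}$ as the pairing through $(\hat{p}_{T(A)},p_{T(A)})$ in one direction, and the bracket identity $T(\{f\})=\{T(f)c\}$ of Lemma \ref{lemmaBracketT0} together with $c\{c\}=\{cc\}=\{1\}$ in the other). The paper explicitly leaves {\bf [CDS.1]}$'$ to the reader, and your sketch of that part --- reading $\lambda_{A\times B}=\<\lambda_A,\lambda_B\>$ as linearity of the projections and then applying Lemma \ref{lemmaLift}(i) and (ii), with the converse checked componentwise against $(\hat{p},p)$ --- is a correct way to fill it in.
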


\begin{proof}
We shall check only the second condition, leaving the first to the reader.  
\begin{description}
\item{\bf [CDS.2]$^\prime$}   
We need to show that for a differential object 
$\hat{p}_{T(A)} = cT(\hat{p}_A)$ if and only if $\lambda_{T(A)} = T(\lambda_A)c$.  Starting with a differential object 
we define $\lambda_{T(A)}$ as 
$$\xymatrix{& & T(A) \\
            T(A) \ar@/_1pc/[rrd]_{!m^T_1T(\zeta_A)} \ar@{=}@/^1pc/[rru] \ar@{..>}[r]^{\lambda_{T(A)}} & T^2(A) \ar[ru]^{\hat{p}_{T(A)}} \ar[rd]_{p_{T(A)}} \\ 
            & & T(A)}$$
We must check that $T(\lambda_A)c$ works:
\begin{eqnarray*}
T(\lambda_A)c\hat{p}_{T(A)} & = & T(\lambda_A)ccT(\hat{p}_{A}) = T((\lambda_A\hat{p}_{A}) = T(1_A) = 1_{T(A)} \\
T(\lambda_A)cp_{T(A)} &= & T(\lambda_A)T(p_A)= T(\lambda_Ap_A) = T(!_A\zeta_A) = !m^T_1T(\zeta_A).
\end{eqnarray*}
Conversely for a differential bundle over $1$ we must check $ \hat{p}_{T(A)} = \{ 1_{T^2(A)} \} = cT(\{ 1_{T(A)}\}) = c T(\hat{p}_A)$.  Using \ref{lemmaBracketT} we have:
$$cT(\{ 1_{T(A)} \}) = c \{ T(1_{T(A)})c \} = c \{ c\} = \{ cc\} = \{ 1_{T^2(A)} \}.$$
\end{description}
\end{proof}

It is straightforward now to observe: 

\begin{lemma}
Cartesian differential categories always have a coherent differential structure given by ${\sf A} = (!_A,+,0,\<1,0\>)$. 
\end{lemma}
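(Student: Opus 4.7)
The plan is to verify the equivalent conditions \cd{S.1}$'$ and \cd{S.2}$'$ from Proposition \ref{candiff}, using the concrete form of the tangent structure on a Cartesian differential category recalled in Example \ref{tangent-category-examples}(ii): $T(A) = A \x A$, $T(f) = \<D(f),\pi_1 f\>$, $p = \pi_1$, $0 = \<1,0\>$ (so $\zeta_A = 0$ and $\sigma_A = +$), $\ell(u,x) = (u,0,0,x)$, and $c(u,v,w,x) = (u,w,v,x)$. The first step is to identify $\hat{p}_A$: since $\lambda_A = \<1,0\>$ and $\hat{p}_A$ is characterized by $\lambda_A\hat{p}_A = 1$ and $\lambda_A p = {!}\zeta_A$, we must have $\hat{p}_A = \pi_0\colon A \x A \to A$.

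For \cd{S.1}$'$, the canonical structure on $A \x B$ has $\zeta_{A\x B} = 0 = \<0,0\> = \<\zeta_A,\zeta_B\>$ and $\sigma_{A \x B} = +$, which unwinds componentwise to $\<\pi_0\pi_0 + \pi_1\pi_0,\pi_0\pi_1 + \pi_1\pi_1\> = {\sf ex}(\sigma_A \x \sigma_B)$. For the projection we have $\hat{p}_{A \x B} = \pi_0$, and using \cd{3} to compute $D(\pi_0) = \pi_0\pi_0$, $D(\pi_1) = \pi_0\pi_1$, we get $T(\pi_i)\hat{p} = T(\pi_i)\pi_0 = D(\pi_i)$, so $\<T(\pi_0)\hat{p}_A,T(\pi_1)\hat{p}_B\> = \<\pi_0\pi_0,\pi_0\pi_1\> = \pi_0$ as required.

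For \cd{S.2}$'$, the canonical structure on $T(A) = A \x A$ is again $\zeta_{T(A)} = 0$, $\sigma_{T(A)} = +$, $\hat{p}_{T(A)} = \pi_0$. Since $T(1) = 1 \x 1 \cong 1$, the map $m^T_1$ is an identity and $T(\zeta_A) = T(0) = \<D(0),\pi_10\> = 0$ by \cd{1}, so $m^T_1T(\zeta_A) = 0 = \zeta_{T(A)}$. Direct computation of $T(\pi_0) = \<\pi_0\pi_0,\pi_1\pi_0\>$ and $T(\pi_1) = \<\pi_0\pi_1,\pi_1\pi_1\>$ reveals that $\<T(\pi_0),T(\pi_1)\> = {\sf ex}$, hence $m^T_\x = {\sf ex}$. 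Using \cd{1} to expand $D(+) = D(\pi_0) + D(\pi_1) = \pi_0\pi_0 + \pi_0\pi_1$, one obtains $T(+) = \<\pi_0\pi_0 + \pi_0\pi_1,\pi_1\pi_0 + \pi_1\pi_1\>$, and composing with ${\sf ex}$ yields $+$ on $A \x A$, matching $\sigma_{T(A)}$. Finally, $c$ on $T^2(A) = (A \x A)\x(A\x A)$ is precisely the exchange ${\sf ex}$, so $cT(\hat{p}_A) = cT(\pi_0) = {\sf ex}\<\pi_0\pi_0,\pi_1\pi_0\> = \<\pi_0\pi_0,\pi_0\pi_1\> = \pi_0 = \hat{p}_{T(A)}$.

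The verification is entirely routine, and no genuine obstacle arises: everything reduces to patient bookkeeping with the product structure together with \cd{1} and \cd{3}. The one point worth highlighting is that the identification $\hat{p}_A = \pi_0$ (an instance of the bijection between differential bundles over $1$ and differential objects from Proposition \ref{diffObjeqdiffBun}) is what makes all three coherences in \cd{S.2}$'$ collapse to identities that hold on the nose; in particular, $c = {\sf ex} = m^T_\x$ accounts simultaneously for the canonical flip, the product mediator, and the exchange required for $\sigma_{T(A)}$.
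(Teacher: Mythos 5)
Your verification is correct and is precisely the routine check the paper leaves implicit (the paper offers no proof beyond ``it is straightforward now to observe''); identifying $\hat{p}_A = \pi_0$ and confirming {\bf [CDS.1]}$^\prime$ and {\bf [CDS.2]}$^\prime$ of Proposition \ref{candiff} via {\bf [CD.1]} and {\bf [CD.3]} is the intended route. One slip in your recalled setup: since $p = \pi_1$, the zero of the tangent structure is $0_A = \<0,1\>$, not $\<1,0\>$ (which is the lift $\lambda_A$), but this typo is never used downstream and all of your computations go through.
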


The main theorem of the section is:

\begin{theorem} \label{canonical-thm}
Every Cartesian tangent category with a coherent differential structure is a left additive category and has an associated  differential
$$\infer{D[f]:=  \mu_A T(f) \hat{p}_B: A \x A \to B}{f: A \to B}$$
which makes it a Cartesian differential category.
\end{theorem}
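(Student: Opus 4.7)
The plan is to first establish left-additive structure. For parallel arrows $f, g: A \to B$, define $f + g := \langle f, g\rangle \sigma_B$ and $0 := {!_A}\zeta_B$, using the canonical commutative monoid on $B$; left-additivity ($h(f+g) = hf + hg$ and $h \cdot 0 = 0$) is then immediate from the universal property of the pairing, and the monoid axioms transfer directly from $(B,\sigma_B,\zeta_B)$. The key structural observation, already established inside the proof of Proposition \ref{diffObjeqdiffBun}, is that $\langle \hat{p}_A, p_A\rangle : TA \to A \times A$ is an isomorphism whose inverse is precisely $\mu_A$. Combined with Lemma \ref{lemmaV} (which gives $\mu_A p_A = \pi_1$), this yields the fundamental decomposition
$$\mu_A T(f) \;=\; \langle D[f],\, \pi_1 f\rangle \mu_B$$
for every $f: A \to B$, which will drive essentially all of the seven CD axioms.

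Axioms CD.3 and CD.4 follow quickly from the product coherence CDS.1$'$ (unpacking $\hat{p}_{A \times B} = \langle T(\pi_0)\hat{p}_A, T(\pi_1)\hat{p}_B\rangle$) together with the functoriality and product-preservation of $T$ (giving $T(\langle f, g\rangle) = \langle T(f), T(g)\rangle m^T_\times$) and Lemma \ref{muproj}. For CD.1, the additivity condition on $\hat{p}_B$ reads $T(\sigma_B)\hat{p}_B = (\hat{p}_B \times \hat{p}_B)\sigma_B$, so a short calculation gives $D[f+g] = \mu_A \langle T(f), T(g)\rangle (\hat{p}_B \times \hat{p}_B)\sigma_B = D[f] + D[g]$; Lemma \ref{linearityOfZeta} similarly yields $D[0] = 0$. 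For CD.2, additivity of $D[f]$ in its first variable reduces, via the fundamental decomposition, to additivity of $\mu_A$ in its first variable, which follows from $(\lambda_A, \zeta_A)$ being an additive bundle morphism into $(TA, p, +, 0)$ together with the interchange law of Lemma \ref{lemmaInterchange}. The chain rule CD.5 is the cleanest consequence of the decomposition:
$$D[fg] \;=\; \mu_A T(f) T(g)\hat{p}_C \;=\; \langle D[f], \pi_1 f\rangle \mu_B T(g)\hat{p}_C \;=\; \langle D[f], \pi_1 f\rangle D[g].$$

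The main obstacle will be CD.6 and CD.7, which express the essentially "infinitesimal" content of the axioms and demand careful handling of the iterated tangent bundle. The strategy is to expand $D[D[f]] = \mu_{A \times A}\, T^2(f)\, T(\hat{p}_B)\,\hat{p}_B$ and then use the coherences CDS.1$'$ and CDS.2$'$ to rewrite $\mu_{A \times A}$ in terms of $\mu_A$ and $\lambda_A$. For CD.6, precomposition with $\langle \langle a, 0\rangle, \langle 0, d\rangle\rangle$ forces the "off-diagonal" additive pieces in $\mu_{A \times A}$ to collapse, producing an occurrence of $\ell_A$ acting on $\langle a, d\rangle \lambda_A$; the final coherence axiom in the definition of a differential object, namely $\ell_A T(\hat{p}_A)\hat{p}_A = \hat{p}_A$, then contracts the iterated derivative down to $\langle a, d\rangle D[f]$. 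For CD.7, the same unfolding instead produces an occurrence of the canonical flip $c$, and Proposition \ref{propCCondition} ($cT(\hat{p}_B)\hat{p}_B = T(\hat{p}_B)\hat{p}_B$) delivers the symmetry of second partials. The most delicate part is the bookkeeping of the structural isomorphisms $m^T_\times$, $m^T_1$ and the interactions of $\ell$ and $c$ with products; once these are correctly arranged, each remaining coherence axiom for a differential object translates directly into one of the two infinitesimal CD axioms.
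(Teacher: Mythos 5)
Your proposal is correct and follows essentially the same route as the paper: the decomposition $\mu_A T(f) = \<D[f],\pi_1 f\>\mu_B$ (obtained from $\<\hat{p},p\>$ inverting $\mu$) is exactly the device the paper uses, the first five axioms are handled by the same additive-bundle-morphism and interchange arguments, and CD.6/CD.7 are reduced, as in the paper, to the identities $\<\<x,\zeta\>,\<\zeta,y\>\>\mu_{A\x A}T(\mu_A)=\<x,y\>\mu_A\ell$ and ${\sf ex}\,\mu_{A\x A}T(\mu_A)=\mu_{A\x A}T(\mu_A)c$ together with $\ell T(\hat{p})\hat{p}=\hat{p}$ and Proposition \ref{propCCondition}. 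The only difference is cosmetic: the paper streamlines CD.1 and CD.3 through the bracket operation $\{-\}$ and linearity of $(\sigma,1)$ and $(T(\pi_i),1)$, where you invoke the additivity axioms for $\hat{p}$ directly.
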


Before proving this we establish an identity which plays a key role in proving that a tangent category with coherent differential structure satisfies \cd{7}.

\begin{lemma}\label{exchangeForCanonicalDiff}
In a Cartesian tangent category with canonical differential structure the following equation holds for all objects $A$:
	 $${\sf ex} \mu_{A \x A} T(\mu_A) = \mu_{A \x A}T(\mu_A) c_A.$$
\end{lemma}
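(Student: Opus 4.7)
The plan is to verify the identity componentwise via the four canonical scalar projections that $T^2(A)$ inherits from the coherent differential structure. By Proposition \ref{candiff}, $T(A)$ is itself a differential object with $\hat{p}_{T(A)} = cT(\hat{p}_A)$, so $T^2(A)$ is canonically identified with $A^4$ via the four iterated projections
\[ e_1 = \hat{p}_{T(A)}\hat{p}_A, \quad e_2 = \hat{p}_{T(A)}p_A, \quad e_3 = p_{T(A)}\hat{p}_A, \quad e_4 = p_{T(A)}p_A. \]
It therefore suffices to show that both sides of the claimed equation agree after post-composition with each $e_i$.

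For the three cases $e_2$, $e_3$, and $e_4$, the calculation is routine, using on the bundle side the identities $\mu_A\hat{p}_A = \pi_0$ (Lemma \ref{lemmaLift}(vii), combined with $\hat{p}_A = \{1\}$) and $\mu_A p_A = \pi_1$ (Lemma \ref{lemmaV}), together with $\mu_{A \x A}T(\pi_i) = (\pi_i \x \pi_i)\mu_A$ from Lemma \ref{muproj}; and on the tangent-structure side, the relations $c \cdot p_{T(A)} = T(p_A)$ and $c \cdot T(p_A) = p_{T(A)}$ (both consequences of the additive bundle morphism defining $c$ together with $c^2 = 1$), naturality of $p$, and the elementary identity ${\sf ex}\pi_i = \pi_i \x \pi_i$ for $i = 0, 1$. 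In each case, after a short reduction both sides collapse to the same composite of $\pi_0$ and $\pi_1$ out of $(A \x A) \x (A \x A)$.

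The main obstacle is the $e_1$ case. After substituting $\hat{p}_{T(A)} = cT(\hat{p}_A)$, the right-hand side $\mu_{A \x A}T(\mu_A)\, c \cdot cT(\hat{p}_A)\hat{p}_A$ reduces cleanly via $c^2 = 1$ to $\mu_{A \x A}T(\pi_0)\hat{p}_A = (\pi_0 \x \pi_0)\pi_0$; but on the left-hand side ${\sf ex}\,\mu_{A \x A}T(\mu_A) \cdot cT(\hat{p}_A)\hat{p}_A$ no second $c$ is available with which to cancel. Here we invoke Proposition \ref{propCCondition}, which supplies precisely the required identity $cT(\hat{p}_A)\hat{p}_A = T(\hat{p}_A)\hat{p}_A$; with this substitution the left-hand side simplifies to ${\sf ex}(\pi_0 \x \pi_0)\pi_0$, which equals $(\pi_0 \x \pi_0)\pi_0$ because this composite picks out the first-first component of $(A \x A) \x (A \x A)$ and is therefore fixed by ${\sf ex}$. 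Thus all four projections agree and the identity follows.
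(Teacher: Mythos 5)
Your proposal is correct and follows essentially the same route as the paper: both verify the identity by post-composing with a jointly monic family of four projections of $T^2(A)$ onto $A$, reduce everything to composites of $\pi_0$ and $\pi_1$ via $\mu\hat{p}=\pi_0$, $\mu p=\pi_1$ and Lemma \ref{muproj}, and resolve the one non-trivial component by invoking Proposition \ref{propCCondition} ($cT(\hat{p})\hat{p}=T(\hat{p})\hat{p}$). The only (immaterial) difference is that your projections $\hat{p}_{T(A)}\hat{p}_A,\dots$ are the paper's $T(\hat{p}_A)\hat{p}_A,\dots$ precomposed with the isomorphism $c$.
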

\begin{proof}
To prove this we post-compose each side of the equation with the projections
$T(\hat{p}_A)\hat{p}_A$, $T(\hat{p}_A)p_A$, $T(p_A)\hat{p}_A$, and $T(p_A)p_A$ and show their projections are equal, by using the result that $cT(\hat{p})\hat{p} = T(\hat{p})\hat{p}$:
\begin{description}
\item{$T(\hat{p}_A)\hat{p}_A$:}~
\begin{eqnarray*}
{\sf ex}\mu T(\mu)T(\hat{p})\hat{p} & = & {\sf ex}\mu T(\pi_0)\hat{p} = {\sf ex}(\pi_0 \x \pi_0)\mu\hat{p} \\
& = & {\sf ex}(\pi_0 \x \pi_0)\pi_0 =\pi_0\p_0 \\
\mu T(\mu)c T(\hat{p}) \hat{p} & = &  \mu T(\mu) T(\hat{p}) \hat{p} = \mu T(\pi_0) \hat{p} \\
& = &  (\pi_0 \x \pi_0) \mu \hat{p} = (\pi_0 \x \pi_0) \pi_0 = \pi_0\pi_0 
\end{eqnarray*}
\item{$T(\hat{p}_A)p_A$:}
\begin{eqnarray*}
{\sf ex}\mu T(\mu)T(\hat{p})p & = & {\sf ex}\mu T(\pi_0)p = {\sf ex}(\pi_0 \x \pi_0)\mu p \\
& = & {\sf ex}(\pi_0 \x \pi_0)\pi_1 = {\sf ex}\pi_1\p_0 = \pi-0\pi_1 \\
\mu T(\mu)c T(\hat{p}) p & = &  \mu T(\mu) c p \hat{p} = \mu T(\mu) T(p) \hat{p} = \mu T(\pi_1) \hat{p} \\
& = &  (\pi_1 \x \pi_1) \mu \hat{p} = (\pi_1 \x \pi_1) \pi_0 = \pi_0\pi_1 
\end{eqnarray*}
\item{$T(p_A)\hat{p}_A$:}
\begin{eqnarray*}
{\sf ex}\mu T(\mu)T(p)\hat{p} & = & {\sf ex}\mu T(\pi_1)\hat{p} = {\sf ex}(\pi_1 \x \pi_1)\mu \hat{p} \\
& = & {\sf ex}(\pi_1 \x \pi_1)\pi_0 = {\sf ex}\pi_0\p_1 = \pi_1\pi_0 \\
\mu T(\mu)c T(p)\hat{p} & = &  \mu T(\mu) p \hat{p} = \mu p \mu \hat{p} = \pi_1 \pi_0
\end{eqnarray*}
\item{$T(p_A)p_A$:}
\begin{eqnarray*}
{\sf ex}\mu T(\mu)T(p)p & = & {\sf ex}\mu p \mu p = {\sf ex} \pi_1 \pi_1 = \pi_1 \pi_1  \\
\mu T(\mu)c T(p)p & = &  \mu T(\mu) p p = \mu p \mu p = \pi_1 \pi_1
\end{eqnarray*}
\end{description}

\end{proof}

\begin{proof} (Of theorem \ref{canonical-thm})
The canonical differential structure makes each object canonically a commutative monoid which implies that the category is left additive (see 1.2.2 of \cite{cartDiff}).  
It remains therefore to show that the differential so defined satisfies {\bf [CD.1]}--{\bf [CD.7]}.  For this, first note that
$$D_A[f] = \mu_A T(f) \hat{p}_B = \mu_A T(f) \{ 1_{T(B)} \} = \{ \mu_A T(f). \}$$
We now check each of the axioms:
\begin{description}
\item{\bf [CD.3]}
$D[1] = \{ \mu T(1) \} = \{ \mu \} = \pi_0$ and $D[\pi_i] = \{\mu T(\pi_i)\} = \{ \mu\} \pi_i = \pi_0 \pi_i$ where the penultimate step uses 
the linearity of $(T(\pi_i),1_M): {\sf q}_2 \to {\sf q}$.
\item{\bf [CD.4]}  We have the following calculation:
\begin{eqnarray*}
D[\<f,g\>] & = & T( \<f,g\> )\hat{p}_{A \x B} = T(\< f,g\>) m^T_{A \x B} (\hat{p}_A \x \hat{p}_B) \\
           & = & \< T(f),T(g) \> (\hat{p}_A \x \hat{p}_B) = \< T(f)\hat{p}_A,T(g)\hat{p}_B \> \\
           & = &  \< D[f],D[g]\>
\end{eqnarray*}
\item{\bf [CD.1]}
\begin{eqnarray*}
 D[f + g] & = & \{ \mu T(\<f,g\>\sigma_A) \} \\
              & = & \{ \mu T(\<f,g\> ) \} \sigma_A ~~\mbox{($(\sigma,1_A): A\x A \to A$ is linear)} \\
              & = & D[\<f,g\>] \sigma_A = \< D[f],D[g] \> \sigma_A \\
              & = & D[f] + D[g]
\end{eqnarray*}
\item{\bf [CD.2]}  We have:
\begin{eqnarray*}
\lefteqn{\< f + g,h\>D[f]} \\ & = & \<\< f,g\>\sigma_A,h\> \mu T(f)\hat{p}_A \\
& = & \<\< f,g\>\sigma_A \lambda_A,h 0\>m^T_\x T(\sigma) T(f)\hat{p}_A \\
& = & \<\< f \lambda_A,g \lambda_A\>+,\< h 0,h0\> +\>m^T_\x T(\sigma) T(f)\hat{p}_A \\
& = & \< \< f \lambda_A,h0\>m^T_\x T(\sigma),\<g \lambda_A,h0\>m^T_\x T(\sigma) \> + T(f) \hat{p}_A \\
& & ~~~~~~~~\mbox{(exchange of additions - Lemma \ref{lemmaInterchange})}\\
& = & \< \< f \lambda_A,h0\>m^T_\x T(\sigma) T(f),\<g \lambda_A,h0\>m^T_\x T(\sigma) T(f) \> + \hat{p}_A \\
& & ~~~~~~~~\mbox{(naturality of $+$)}\\
& = &  \< \< f \lambda_A,h0\>m^T_\x T(\sigma) T(f)\hat{p}_A,\<g \lambda_A,h0\>m^T_\x T(\sigma) T(f)\hat{p}_A \> \sigma \\
& & ~~~~~~~~\mbox{($\hat{p}$ is additive)} \\
& = &  \< \< f,h\>\mu T(f)\hat{p}_A,\<g h\>\mu T(f)\hat{p}_A \> \sigma \\
& = &  \<f,h\>D[f] + \<g,h\> D[g]
\end{eqnarray*}
\item{\bf [CD.5]}  We have:
\begin{eqnarray*}
D[fg] & = & \mu T(fg)\hat{p}_C \\
      & = & \mu T(f)\< \hat{p}_B,p_B \> \mu T(g) \hat{p}_C \\
      & = & \< \mu T(f)\hat{p}_B,\mu T(f)p_B \> D[g] \\
      & = & \< D[f],\mu p_A f \> D[g] = \< D[f],\pi_1f\> D[g] 
\end{eqnarray*}
\item{\bf [CD.6]} We shall use the equality:
$$\<x,\zeta_A\>,\<\zeta_A,y\>\> \mu_{A \x A}T(\mu_A) = \<x,y\> \mu_A \ell$$
in the following:
\begin{eqnarray*}
\<\<x,0\>,\<0,y\>\>D[D[f]] & = &   \<x,\zeta_A\>,\<\zeta_A,y\>\> \mu_{A \x A}T(\mu_A)T(T(f)\hat{p}_B)\hat{p}_B \\
& = & \< x,y \> \mu_A \ell_A T^2(f)T(\hat{p}_A)\hat{P}_A \\
& = &  \< x,y \> \mu_A T(f) \ell_AT(\hat{p}_A)\hat{P}_A \\
& = & \< x,y \> \mu_A T(f) \hat{p}_A = \<x,y\>D[f]
\end{eqnarray*}
It remains to prove the equality which we do by post-composing each side with the projections 
$T(\hat{p}_A)\hat{p}_A$, $T(\hat{p}_A)p_A$, $T(p_A)\hat{p}_A$, and $T(p_A)p_A$:
\begin{description}
\item{$T(\hat{p}_A)\hat{p}_A$:}
\begin{eqnarray*}
\lefteqn{\< \<x,\zeta_A\>,\<\zeta_A,y\>\> \mu_{A \x A}T(\mu_A)T(\hat{p}_A)\hat{p}_A} \\ 
& = & \<x,\zeta_A\>,\<\zeta_A,y\>\> \mu_{A \x A}T(\pi_0)\hat{p}_A \\
& = & \<x,\zeta_A\>,\<\zeta_A,y\>\> (\p_0 \x \pi_0) \mu_{A}\hat{p}_A \\
& = & \<x,\zeta_A\>,\<\zeta_A,y\>\> (\p_0 \x \pi_0) \pi_0 \\
& = & \<x,\zeta_A\>,\<\zeta_A,y\>\> \pi_0 \pi_0 = x \\
\lefteqn{\<x,y\> \mu_A \ell T(\hat{p}_A)\hat{p}_A} \\
& = & \< x,y\>\mu_A \hat{p}_A \\
& = & \< x,y\> \pi_0 = x 
\end{eqnarray*}
\item{$T(\hat{p}_A)p_A$:}
\begin{eqnarray*}
\lefteqn{\<\<x,\zeta_A\>,\<\zeta_A,y\>\> \mu_{A \x A}T(\mu_A)T(\hat{p}_A)p_A} \\
& = & \<x,\zeta_A\>,\<\zeta_A,y\>\> \mu_{A \x A}p_{A\x A} \mu_A \hat{p}_A \\
& = & \<x,\zeta_A\>,\<\zeta_A,y\>\> \pi_1 \pi_0 = \zeta_A \\
\lefteqn{ \<x,y\> \mu_A \ell T(\hat{p}_A)p_A} \\
& = &  \<x,y\> \mu_A \ell p_{T(A)} \hat{p}_A \\
& = & \<x,y\> \mu_A p_A 0_A \hat{p}_A \\
& = & \<x,y\> \mu_A p_A ! \zeta_A = \zeta_A
\end{eqnarray*}
\item{$T(p_A)\hat{p}_A$:}
\begin{eqnarray*}
\lefteqn{\<\<x,\zeta_A\>,\<\zeta_A,y\>\> \mu_{A \x A}T(\mu_A)T(p_A)\hat{p}_A} \\
& = & \<x,\zeta_A\>,\<\zeta_A,y\>\> \mu_{A \x A}T(\pi_1)\hat{p}_A \\
& = & \<x,\zeta_A\>,\<\zeta_A,y\>\> (\pi_1,\pi_1)\mu_{A}\hat{p}_A \\
& = & \<x,\zeta_A\>,\<\zeta_A,y\>\> (\pi_1,\pi_1)\pi_0 \\
& = & \<x,\zeta_A\>,\<\zeta_A,y\>\> \pi_0 \pi_1 = \zeta_A \\
\lefteqn{\<x,y\> \mu_A \ell T(p_A)\hat{p}_A} \\
& = & \< x,y\> \mu p 0 \hat{p} \\
& = & \< x,y\> \mu p ! \zeta_A = \zeta 
\end{eqnarray*}
\item{$T(p_A)p_A$:}
\begin{eqnarray*}
\lefteqn{\< \<x,\zeta_A\>,\<\zeta_A,y\>\> \mu_{A \x A}T(\mu_A)T(p_A)p_A }\\
& = & \<x,\zeta_A\>,\<\zeta_A,y\>\> \mu_{A \x A}p_{A \x A} \mu_A p_A \\
& = & \<x,\zeta_A\>,\<\zeta_A,y\>\> \pi_1 \pi_1 = y \\
\lefteqn{\<x,y\> \mu_A \ell T(p_A)p_A} \\
 & = & \<x,y\> \mu_A p_A 0_A p_A \\
& = & \<x,y\> \pi_1 = y
\end{eqnarray*}
\end{description}

\item{\bf [CD.7]} We want  $D[D[f]] = {\sf ex} D[D[f]]$:
\begin{eqnarray*}
D[D[f]] & = & \mu_{A \x A} T(\mu_AT(f)\hat{p}_A)\hat{p}_A \\
        & = & {\sf ex} \mu_{A \x A} T(\mu_A) c T^2(f) T(\hat{p}_A)\hat{p}_A \mbox{ (by Lemma \ref{exchangeForCanonicalDiff})} \\
        & = & {\sf ex} \mu_{A \x A} T(\mu_A) T^2(f) c T(\hat{p}_A)\hat{p}_A \\
        & = & {\sf ex} \mu_{A \x A} T(\mu_A) T^2(f) T(\hat{p}_A)\hat{p}_A \mbox{ (by Lemma \ref{propCCondition})} \\
        & = & {\sf ex} \mu_{A \x A} T(\mu_A T(f)\hat{p}_A)\hat{p}_A \\
        & = & {\sf ex} D[D[f]]
\end{eqnarray*}
\end{description}

\end{proof}


\section{Transverse and display systems}\label{secTransAndDisplay}


The theory of transverse maps in classical differential geometry -- as embodied by smooth manifolds -- indicates that the issue of which pullbacks exist and are preserved by the tangent functor can be quite delicate.  In the general setting of tangent categories the question of which pullbacks exist is, as a consequence, also important.  In particular, considering Cartesian differential categories as a source of examples of Cartesian tangent categories, we cannot require many pullbacks -- outside those implied by having products -- to exist.  Yet, the presence of pullbacks is clearly important in the development of the fibrational properties of tangent categories.  Thus, the purpose of this section is formalize these issues in the context of arbitrary tangent categories.  

\subsection{Transverse systems}

A transverse system on a category is a way of specifying the existence of certain multiple pullbacks and facilitates formalizing the requirement that functors should preserve these pullbacks.   Let ${\mathbf 2}$ denote the arrow category $0 \to 1$.  

\begin{definition} 
A {\bf transverse system} ${\cal Q}$ on a category $\X$ is a graded collection of functors $\left( {\cal Q}_n \right)_{n \in \N}$ with each $q \in {\cal Q}_n$ a pullback preserving functor $q: {\mathbf 2}^n \to \X$ such that:
\begin{enumerate}[(a)]
	\item  All functors $A: {\mathbf 2}^0 \to \X$ (objects) and $f: {\mathbf 2}^1  \to \X$ (maps) are in ${\cal Q}$.  
	\item If $r: {\mathbf 2}^m \to {\mathbf 2}^n$ is pullback preserving and $q: {\mathbf 2}^n \to \X$ is in ${\cal Q}$ then $rq$ is in ${\cal Q}$. 
	\item If $q: {\mathbf 2}^n \to \X$ is in ${\cal Q}$ and $\alpha: q \Rightarrow q':  {\mathbf 2}^n \to \X$ is a natural isomorphism then $q'$ is in ${\cal Q}$.  
	\item For any $n \geq 3$, if $q: {\mathbf 2}^{n} \to \X$ has all its faces in ${\cal Q}$ then it is itself in ${\cal Q}$, where the faces $d_0^{i,n}$ and $d_1^{i,n}$ are functors defined by 
		$$ d_0^{i,n}: {\mathbf 2}^n \to {\mathbf 2}^{n+1}; (v_1,...,v_i,..,v_n) \mapsto (v_1,...,0,v_i,..,v_n)  ~~~\mbox{and}$$
		$$~~~d_1^{i,n}: {\mathbf 2}^n \to {\mathbf 2}^{n+1}; (v_1,...,v_i,..,v_n) \mapsto (v_1,...,1,v_i,..,v_n).$$
\end{enumerate}
Each functor $q \in {\cal Q}$ is called a ${\cal Q}$-{\bf transverse} and determines a finite multiple pullback in $\X$: the base of the pullback is $q(1,...,1)$, the apex is $q(0,..,0)$, and the arrows of the pullback diagram are generated by the maps $$q((1,..,0,...,1) < (1,..,1,...,1).$$

\end{definition}

Condition (b) allows degenerate transverses which using (c) can be made trivially non-degenerate: in this manner pullbacks along isomorphisms, for example, are always included. Condition (c) also ensures that equivalent pullbacks are always included.
Condition (d) ensures that a transverse system is generated by its pullback squares.  Of course, while this is the case it still may be more convenient to specify the system using multiple pullbacks -- which is the case, in particular, for tangent categories.

Clearly, a transverse system need by no means specify every multiple pullback which exists in the category.   Indeed, the point of a transverse system is to allow one to be parsimonious with limits and the preservation thereof.

It is clear that given an arbitrary collection ${\cal P}$ of pullback preserving functors $p: {\mathbf 2}^n \to \X$, one can close this collection under the rules given above to get a transverse system $\widehat{{\cal P}}$.  We record this formally in the following result:

\begin{lemma} If ${\cal P}$ is any collection of pullback preserving functors from ${\mathbf 2}^n$ to $\X$ then 
\begin{enumerate}[(i)]
\item Every functor in $\widehat{{\cal P}}$ is indeed a pullback preserving functor and so $\widehat{{\cal P}}$ is a transverse system;
\item $\widehat{\widehat{{\cal P}}} = \widehat{{\cal P}}$;
\item Whenever a functor $F: \X \to \Y$ preserves all elements of ${\cal P}$ then it will necessarily preserve all elements of  $\widehat{{\cal P}}$.  
\end{enumerate}
\end{lemma}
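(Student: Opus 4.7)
The plan is to construct $\widehat{{\cal P}}$ explicitly as the intersection of all transverse systems on $\X$ containing ${\cal P}$ (equivalently, to build it in stages: $\widehat{{\cal P}}_0 = {\cal P}$, with $\widehat{{\cal P}}_{\alpha+1}$ adjoining every functor obtainable from $\widehat{{\cal P}}_\alpha$ by one application of rules (a)--(d), and taking unions at limit ordinals). The intersection presentation works because each of (a)--(d) is an implication of the form ``if certain functors lie in the system, then so does this one'', a shape preserved by intersection. So $\widehat{{\cal P}}$ is the smallest transverse system containing ${\cal P}$, and all three claims can be attacked by induction on the stage at which a functor enters $\widehat{{\cal P}}$.

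For (i), I would proceed by induction on stage. The hypothesis of the lemma handles the base case. For rule (a), a functor $\mathbf{2}^0 \to \X$ or $\mathbf{2}^1 \to \X$ is trivially pullback-preserving, since the multiple pullback it ``determines'' is degenerate. For (b), if $q : \mathbf{2}^n \to \X$ is pullback-preserving and $r : \mathbf{2}^m \to \mathbf{2}^n$ is too, then $r \circ q$ sends its canonical cone to the image of a pullback cone under a pullback-preserving functor, hence to a pullback. For (c), if $\alpha : q \cong q'$ is a natural isomorphism then $q$ and $q'$ give canonically isomorphic cones, so one preserves pullbacks iff the other does. Rule (d) is the genuinely nontrivial step: for $n \geq 3$, the multiple pullback determined by $q$ (the cone at $q(0,\ldots,0)$) can be computed as an iterated limit, pulling back one coordinate at a time; doing this for the first coordinate reduces the computation to taking the limit of a diagram assembled from the two faces $d_0^{1,n-1} q$ and $d_1^{1,n-1} q$, both of which are pullback-preserving by hypothesis. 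A standard limit-of-limits argument then shows $q(0,\ldots,0)$ is the required multiple pullback.

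Statement (ii) is then immediate: by (i), $\widehat{{\cal P}}$ is itself a transverse system, so it is already closed under the rules that define the hat operation, whence $\widehat{\widehat{{\cal P}}} = \widehat{{\cal P}}$.

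For (iii), I would run the same stage-induction, replacing ``$q$ is pullback-preserving'' by ``$F \circ q$ is pullback-preserving''. The base case is the hypothesis on $F$. Rule (a) is vacuous. Rule (b) uses $F \circ (r \circ q) = r \circ (F \circ q)$ together with pullback-preservation of $r$. Rule (c) uses that $F\alpha : F \circ q \Rightarrow F \circ q'$ is still a natural isomorphism. Rule (d) is again the main obstacle, but by the same face-by-face iterated-limit argument: if $F$ preserves each face of $q$, then applying $F$ commutes with the iterated pullback computation that builds the apex of $q$ from its faces, and so $F$ preserves $q$. The only real work in the whole proof is pinning down this decomposition of an $n$-cube transverse into iterated pullbacks along its faces, which is elementary but needs to be spelled out carefully for (d).
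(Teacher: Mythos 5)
The paper records this lemma without proof (it is introduced with ``It is clear that\ldots we record this formally''), so there is no argument of the authors' to measure yours against; what follows is an assessment of your sketch on its own terms. Your overall architecture --- realise $\widehat{{\cal P}}$ as the smallest collection containing ${\cal P}$ and closed under rules (a)--(d), prove (i) and (iii) by induction on the stage at which a functor enters, and deduce (ii) from (i) --- is certainly the intended one, and your treatment of rules (a), (b), (c) is correct and complete.

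The gap is in rule (d), and it is concrete: the two codimension-one faces $d_0^{1,n-1}q$ and $d_1^{1,n-1}q$ do not by themselves determine the multiple pullback of $q$. Work out $n=3$, writing $C = q(1,1,1)$ and $A_i = q(1,\ldots,0_i,\ldots,1)$. The face with first coordinate $0$ tells you that $q(0,0,0)$ is the pullback of $q(0,0,1)$ and $q(0,1,0)$ over $A_1$; to convert this into $q(0,0,0) = A_1 \times_C A_2 \times_C A_3$ you must also know that $q(0,0,1) = A_1 \times_C A_2$ and $q(0,1,0) = A_1 \times_C A_3$, and these are $2$-dimensional sub-cubes lying in the faces where the \emph{third} (respectively \emph{second}) coordinate is fixed at $1$ --- not in either of the two faces you invoke. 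The repair is easy but should replace your reduction: since $n \geq 3$, every $2$-dimensional sub-cube of ${\mathbf 2}^n$ fixes at least one coordinate and hence lies in \emph{some} codimension-one face, so the hypothesis of (d) already forces every $2$-face of $q$ to be a pullback; what remains is the genuinely separate (Fubini-type) lemma that an $n$-cube all of whose $2$-faces are pullbacks has its apex equal to the wide pullback of its codimension-one vertices over its top vertex. The same repaired decomposition handles (iii). Finally, you must fix an interpretation of ``pullback-preserving functor ${\mathbf 2}^n \to \X$'' under which any of this goes through: if it meant preservation of \emph{all} pullbacks in the poset ${\mathbf 2}^n$, including the degenerate ones $v = v \wedge v$ over $w$, then every map $q(v) \to q(w)$ would be forced to be monic, which cannot be intended (the projections $p\colon T(M) \to M$ are not monic); the reading that makes (d) sound is preservation of the non-degenerate squares, i.e.\ the $2$-faces, and your induction should say so explicitly.
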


Notice that in the definition of a tangent category we have explicitly required the tangent functor to preserve the $n$-fold pullbacks of projections and the pullbacks for the universality of the lift (as noted earlier, however, this second requirement is automatic given the other axioms in a tangent category).  Thus, we make the following definition:

\begin{definition}
A \textbf{tangent category with a transverse system} consists of a tangent categoy $(\X,\T)$ with a transverse system $\cal Q$ such that:
\begin{itemize}
	\item the pullback powers of $p$ are in $\cal Q$;
	\item the pullbacks determined by the university of the lift are in $\cal Q$;
	\item $T$ preserves $\cal Q$-transverses.  
\end{itemize}
\end{definition}

\begin{example}  {\em ~
\begin{enumerate}[{\em (i)}]
	\item For any tangent category there is a basic transverse system generated by the pullback powers of projections $p:T(M) \to M$, the pullbacks in the universality of the lift, and these pullbacks' images under $T^n$.

	\item In the category of smooth manifolds a key transverse system of interest is generated by the pullbacks of a pair of transverse maps (see page 203 of \cite{lee}).  
	
	\item In a model of SDG all pullbacks exist and are preserved by the tangent functor, and so the standard transverse system on a model of SDG is simply all pullbacks.
\end{enumerate} }
\end{example}

\begin{definition}
A functor $F: (\X,{\cal Q}) \to (\X' ,{\cal Q}')$ between categories with specified transverse systems is {\bf transverse} if it sends $Q$-tranverses to ${\cal Q}'$-transverses.  
\end{definition}

Note that by (c) in the previous lemma, if ${\cal Q} = \widehat{{\cal P}}$, then to check that $F$ is transverse it suffices to check that $F$ sends elements of ${\cal P}$ to elements of ${\cal Q}'$.   

\begin{example} {\em ~
\begin{enumerate}[{\em (i)}]

\item In a tangent category with the minimal transverse system, the functors $T^n$ and $T_n$ are transverse functors.

\item ``Well-adapted models'' of SDG consist of a functor $F$ from the category of smooth manifolds to a model of SDG $\mb{E}$ which in particular sends transverse pullbacks (in the classical sense) to pullbacks in $\mb{E}$ (see \cite{kock}, page 142) and hence can be seen as a transverse functor between these categories with their transverse systems as described above.

\end{enumerate} }

\end{example}


\subsection{Display systems}\label{Display}

When considering fibrations, it will be useful to consider maps along which all pullbacks exist and are in the transverse system.  

\begin{definition}
If ${\cal Q}$ is a transverse system on a category, we say a map $f: A \to B$ in the category is {\bf ${\cal Q}$-quarrable} if every pullback along $f$ exists and is in ${\cal Q}$.  
\end{definition}

This means that $f$ is a ``quarrable'' map\footnote{The terminology  ``quarrable map'', developed in S\'eminaire de G\'eom\'etrie Alg\'ebrique, means a map along which all pullbacks exist.  Such a map is also sometimes called ``carrable''.  ``Quarrable'' is the French word for ``quadrable'' while ``carrable'', translated literally, is ``squarable''.   Quarrable maps were called ``display'' maps by Paul Taylor, who had in mind their application to fibrations: we shall adopt this terminology too.}  in the usual sense: namely that all pullbacks along that map exist.  Of course, to be ${\cal Q}$-quarrable requires, in addition, that all these pullbacks are in the transverse system ${\cal Q}$.  It is clear that: 

\begin{lemma}
In any category with a transverse system ${\cal Q}$:
\begin{enumerate}[ (i) ] 
\item  Isomorphisms are always ${\cal Q}$-quarrable;
\item  A map $f$ is ${\cal Q}$-quarrable if and only if every transverse with apex the codomain of $f$ is a face of a transverse in $\widehat{{\cal Q}}$ with $f$ added to the multiple pullback.  
\end{enumerate}
\end{lemma}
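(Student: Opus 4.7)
The plan is to construct, for each part, the required extended transverse and appeal to the axioms of a transverse system. I will read ``apex'' in (ii) as the common codomain $q(1,\ldots,1)$ of the legs (the ``top'' of the cube $\mathbf{2}^n$).

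For (i), given an isomorphism $f: A \to B$ and any transverse $q: \mathbf{2}^n \to \X$ with $q(1,\ldots,1) = B$, I would build an extension $q': \mathbf{2}^{n+1} \to \X$ by declaring $q'(v, \epsilon) := q(v)$ for both $\epsilon \in \{0,1\}$, with the added leg in the $(n{+}1)$st direction given by the composite $q(v) \to B \xrightarrow{f^{-1}} A$. Since $f^{-1}$ is iso, $q'$ is naturally isomorphic to the transverse obtained from $q$ by precomposition with the (pullback-preserving) projection $\mathbf{2}^{n+1} \to \mathbf{2}^n$, so axioms (b) and (c) place it in $\widehat{{\cal Q}}$. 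Together with part (ii), this establishes that every pullback along $f$ lies in ${\cal Q}$, so $f$ is quarrable.

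For the forward direction of (ii), I would assume $f$ is quarrable and, given a transverse $q: \mathbf{2}^n \to \X$ with $q(1,\ldots,1) = B$, use that the pullback functor $f^*: \X/B \to \X/A$ is everywhere defined and preserves limits. Define $q': \mathbf{2}^{n+1} \to \X$ by $q'(v, 1) := q(v)$ and $q'(v, 0) := f^*(q(v))$, with the new projection $q'(v, 0) \to q'(v, 1)$ being the pullback projection. Pullback-preservation of $q'$ follows from pullback-preservation of $q$ together with $f^*$ preserving limits. The original $q$ is then the face $d_1^{n+1}q'$, and by induction on $n$, using axiom (d) to assemble the faces of $q'$, one obtains $q' \in \widehat{{\cal Q}}$. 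For the reverse direction, given the extension property, for any $g: C \to B$ the one-dimensional transverse $g: \mathbf{2}^1 \to \X$ (with common codomain $B$) extends by $f$ to a two-dimensional transverse in $\widehat{{\cal Q}}$, which is precisely the pullback square of $f$ and $g$; this pullback therefore exists and lies in $\widehat{{\cal Q}}$, so $f$ is quarrable.

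The main obstacle will be the forward direction of (ii): one must verify that the assignment $q'$ genuinely defines a pullback-preserving functor $\mathbf{2}^{n+1} \to \X$, and that the induction on $n$ correctly handles all the faces of $q'$---both those obtained by extending faces of $q$ and those arising from smaller instances of the same construction. A secondary subtlety in (i) is that the naturality witness must be checked with respect to every face map of $\mathbf{2}^{n+1}$, which is where it may be cleanest to deduce (i) as a corollary of (ii) once the latter is in hand, rather than carrying out the direct verification.
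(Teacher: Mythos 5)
The paper itself offers no proof of this lemma (it is prefaced by ``It is clear that''), so your proposal must stand on its own; as written it has a real hole at exactly the step you deferred, together with two smaller problems. On terminology: the paper's definition of a transverse declares the \emph{apex} to be $q(0,\ldots,0)$ and the \emph{base} to be $q(1,\ldots,1)$, so your reading of ``apex'' as the common codomain is what the paper calls the base. Yours is the only reading under which the statement is meaningful (one adds $f$ as a new leg into the common codomain), so this is looseness in the paper rather than an error of yours, but it should be stated as such rather than silently adopted. In (i), your construction does not produce the extended transverse: if $q'(v,\epsilon) := q(v)$ for both $\epsilon$, the new leg $q'(1,\ldots,1,0) \to q'(1,\ldots,1,1)$ is the identity on $B$, not $f$, and the map you describe as ``the added leg'' points \emph{into} $A$ rather than being $f: A \to B$. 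The repair is to place $A$ at the vertex $(1,\ldots,1,0)$, keep $q(v)$ at the remaining vertices of the $0$-slice with structure maps into $A$ twisted by $f^{-1}$, and then observe that this functor is naturally isomorphic to $q\pi$ for the projection $\pi: \mathbf{2}^{n+1} \to \mathbf{2}^{n}$, so that axioms (b) and (c) place it in $\widehat{{\cal Q}}$; that is clearly the argument you intend, and it does work once the vertices are assigned correctly.

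The substantive gap is the induction in the forward direction of (ii). To invoke axiom (d) for $q'$ you must show that \emph{every} face of $q'$ lies in $\widehat{{\cal Q}}$, and the faces contained in the $0$-slice are not instances of your inductive hypothesis. For $i \le n$ the face $d^{i}_{0}q'$ is the face $d^{i}_{0}q$ of $q$ --- whose common codomain is $q(1,\ldots,0,\ldots,1)$, not $B$ --- extended not by $f$ but by the pullback projection $f^{*}(q(1,\ldots,0,\ldots,1)) \to q(1,\ldots,0,\ldots,1)$, i.e.\ by a \emph{pullback of} $f$; and the face $d^{n+1}_{0}q' = f^{*}q$ is the entire substituted transverse over $A$. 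Quarrability of $f$ (read as: binary pullback squares along $f$ exist and are in ${\cal Q}$) gives you neither of these, and neither does an induction whose hypothesis only concerns transverses with common codomain $B$ extended by $f$ itself. So the induction does not close. Whether this can be repaired depends on how one reads the definition ``every pullback along $f$ exists and is in ${\cal Q}$'': if ``pullback'' there already ranges over the multiple pullbacks that ${\cal Q}$ consists of, then (ii) is essentially a restatement of the definition, no induction is needed, and the paper's ``it is clear'' is justified; if it means only binary pullbacks, then the forward direction of (ii) is a genuine claim that your argument does not establish. Either way, the step you set aside as the ``main obstacle'' is the crux of the proof, not a routine verification, and the proposal cannot be accepted until it is resolved.
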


A transverse functor $F: (\X,{\cal Q}) \to (\X' ,{\cal Q}')$ will not necessarily send a ${\cal Q}$-quarrable map to a ${\cal Q}'$-quarrable map.  Sometimes, however, one would like to demand that certain ${\cal Q}$-quarrable maps should be preserved in this sense.     This can be arranged most conveniently by associating a display system, ${\cal D}$, of ${\cal Q}$-quarrable maps in the category.

\begin{definition} \label{display-system}
A {\bf display system}, ${\cal D}$ for a category with a transverse system ${\cal Q}$ consists of a family of ${\cal Q}$-quarrable maps such that:
\begin{itemize}
	\item All isomorphisms are in ${\cal D}$.  
	\item $\cal D$ is closed to pullbacks along arbitrary maps.  
	\item If $q$ and $q'$ are in ${\cal Q}$ and $\alpha: q \to q'$ is a natural transformation between transverses such that for any $(i_1, i_2, \ldots i_n) \ne (0,0,\ldots 0)$, $\alpha_{(i_1,i_2, \ldots i_n)}$ is in $\cal D$ then $\alpha_{(0,0,\ldots 0)}$ is also in $\cal D$.
\end{itemize}
\end{definition}
The significance of the last rather technical condition will become apparent shortly, in proposition \ref{displayed-bun}.  It essentially says that pullbacks of display maps -- as objects of the arrow category -- must also be display maps.

\begin{definition}
A {\bf display} tangent category is a transverse tangent category equipped with a display system on that transverse system such that $T$ preserves display maps.  
\end{definition}

It is important to note that we do not necessarily require that the projection maps $p_M: TM \to M$ be a display map in a display tangent category.  This is important not only for some examples we would like to consider, but also for the development of fibrations of tangent categories (see the note after \ref{tangent-substitution}).  

Nonetheless, when the projection maps \emph{are} in the transverse system, then the transverse system generated by the display maps alone will already include all the transverses required of a tangent category.   Thus, in this case, requiring preservation of the display maps will imply all the preservation requirements of the transverse system.  

\begin{example}  {\em ~
\begin{enumerate}[{\em (i)}]
\item In the tangent category of smooth manifolds, a natural choice of display maps is the submersions.  Note that the projections $p:T(M) \to M$ are submersions, so in this case the projections $p$ are included in the display system.  
\item
Any Cartesian tangent category has a display system generated by projections from products and the maps to the final object. 

Explicitly, we say that a map $f: X \to A$ is a \emph{projection} if there exists a map $f': X \to A'$ such that the pair $(f,f')$ makes $X$ the product of $A$ and $A'$.  In a Cartesian tangent category, we can form a display system in which the display maps are the projections in this sense and the transverses required to make these maps quarrable are all the multiple pullbacks in which all but one leg are projections.  The only axiom that takes a bit of work to check is the naturality axiom.  Suppose we have a natural transformation $\alpha: P \to Q$ between such transverses:
$$\xymatrix{ & Q_{00} \ar[dd]|{\hole}^<<<<<<{} \ar[rr]^{} & & Q_{01} \ar[dd]^{} \\ 
           P_{00} \ar[dd]_{}  \ar@{..>}[ur]^{\alpha_{00}} \ar[rr] & & P_{01} \ar[dd]^<<<<<<{} \ar[ru]_{\alpha_{01}} \\
           & Q_{10} \ar[rr]_<<<<<<{}|>>>>>>>>>>>>{~~} & & Q_{11} \\
           P_{10} \ar[ur]^{\alpha_{10}} \ar[rr]_{} & & P_{11} \ar[ur]_{\alpha_{11}}}$$
so that each $\alpha_{01}, \alpha_{10}, \alpha_{11}$ are projections, as well as the maps $P_{01} \to P_{11}$ and $Q_{01} \to Q_{11}$.  Then if we have $P_{01} = Q_{01} \times P_{01}'$ and $P_{10} = Q_{10} \times P_{01}'$ then one can easily check that $P_{00}$ is the product of $Q_{00}$, $P_{01}'$, and $P_{01}'$, with $\alpha_{00}$ one of the projections.

Note that in this display system the projections from the tangent bundle need not be display maps.
\item
Every Cartesian differential category is an example of a Cartesian tangent category, as was discussed in Section \ref{diff-structure}, thus the projections form a natural display system.  In this case the 
projections from the tangent bundle {\em are} display maps.  
\item In a model of SDG, all pullbacks exist and are preserved by the tangent functor, so the standard display system on a model of SDG consists of all maps in the category.  
\end{enumerate} }
\end{example}

\begin{definition}
A {\bf display} functor between categories with a display system: 
$$F: (\X,{\cal D}) \to (\X',{\cal D}')$$  
is a transverse functor that sends elements of ${\cal D}$ to elements of ${\cal D}'$.  
\end{definition}
In particular, this means explicitly that if $d \in {\cal D}$ then $F(d) \in {\cal D}'$ and, furthermore, whenever 
$$\xymatrix{D \ar[d]_{d'} \ar[r]^{f'} & C \ar[d]^d \\
                    A \ar[r]_f & B}$$
is a pullback then
$$\xymatrix{F(D) \ar[d]_{F(d')} \ar[r]^{F(f')} & F(C) \ar[d]^{F(d)} \\
                    F(A) \ar[r]_{F(f)} & F(B) }$$
 is a pullback.  
 
\begin{example} {\em ~
As noted above, well-adapted models of SDG consist of, in particular, a transverse functor from smooth models to a model of SDG; such a functor is in addition always a display functor relative to the natural display structure on the model of SDG (which consists of all maps in the category). }
\end{example}
 


\subsection{Morphisms of tangent categories revisited}

In this section we revisit morphisms of tangent categories which, in addition, may have a transverse and/or display system associated to them.  We also consider comorphisms of tangent categories, which were not considered in \cite{sman3}.  

\begin{definition}
A \textbf{morphism of tangent categories with transverse systems} 
	\[ F:(\X,\T, {\cal Q}) \to (\X',\T', {\cal Q}') \] 
consists of a transverse functor $F: \X \to \X'$ and a natural transformation $\alpha: TF \to FT'$ such that the following diagrams commute:
$$\xymatrix{TF \ar[dr]_{pF} \ar[r]^\alpha & FT' \ar[d]^{Fp'} \\ & F} 
~~~~~\xymatrix{F \ar[d]_{0F} \ar[dr]^{F0'} \\ TM \ar[r]_\alpha & FT'}
~~~~~\xymatrix{T_2F \ar[d]_{+F} \ar[r]^{\alpha_2} & FT'_2 \ar[d]^{F+'} \\ TF \ar[r]_\alpha & FT'}$$
$$\xymatrix{TF \ar[d]_\ell \ar[r]^\alpha & FT' \ar[d]^{F\ell'} \\
               T^2F \ar[r]_{(T\alpha) (\alpha T')} & F{T'}^2} ~~~~~~
\xymatrix{T^2F \ar[d]_c \ar[r]^{(T \alpha) (\alpha T')} & F{T'}^2 \ar[d]^{Fc'} \\
               T^2F \ar[r]_{(T \alpha) (\alpha T')} & F{T'}^2}$$
If the categories also have display systems, a \textbf{morphism of display tangent categories} simply asks in addition that $F$ be a display functor.  
\end{definition}


The other possibility is to consider \textbf{comorphisms} between tangent categories: these consist of a functor $F$ together with a transformation $\beta: FT \to T'F$ satisfying the same requirements as above but with the functor order reversed; if the tangent categories have display systems the functor is again required to preserve these.  It is worth noticing that to make sense of comorphisms, the assumption that $F$ is transverse is crucial. Consider the coherence diagram for the addition:
$$\xymatrix{F T'_2 \ar[d]_{F+'} \ar[r]^{\beta_2} & T_2F \ar[d]^{+F} \\ FT' \ar[r]_\beta & TF}$$
To obtain a map $\beta_2: T'_2(F(M)) \to F(T_2(M))$ one needs the codomain to be a limit.

\begin{definition}
We say a morphism of tangent categories is \textbf{strong} if $\alpha$ is an isomorphism and \textbf{strict} if $\alpha$ is the identity.   
\end{definition}

Note that a strong morphism is also a comorphism.    

\begin{definition}  A morphism of transverse tangent categories is {\bf Cartesian} if for each $f$, the naturality square   
$$\xymatrix{F(T(E)) \ar[r]^{\alpha_E} \ar[d]_{F(T(f))} & T'(F(E)) \ar[d]^{T'(F(f))} \\
                    F(T(M)) \ar[r]_{\alpha_{M}} & T'(F(M))}$$ 
is a pullback which is transverse in the codomain category $\X'$.  
\end{definition}

Clearly strong (and strict) morphisms are always cartesian, and the identity functor is a strict morphism of tangent categories.  An important example of a strong morphism is $T$ itself:

\begin{lemma}
If $\X$ is a (display) tangent category, then the pair $(T,c): \X \to \X$ is a strong morphism of tangent categories.
\end{lemma}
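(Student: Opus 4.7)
The plan is to verify each of the five coherence diagrams required of a morphism of tangent categories for the pair $(F,\alpha) := (T,c)$, together with the transverse and display conditions on the underlying functor $F = T$. Because $c^2 = 1_{T^2}$, $c$ is a natural isomorphism, so once the morphism conditions are established, the result is automatically strong.

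For the conditions on $p$, on $0$, and on $+$, the key input is the axiom from Definition~\ref{defnTangentCategory} that $(c,1) : (Tp, T(+), T(0)) \to (p_T, +_T, 0_T)$ is an additive bundle morphism. Unpacking this directly gives $c \cdot p_T = T(p)$, $T(0) \cdot c = 0_T$, and $T(+) \cdot c = c_2 \cdot +_T$. The coherences actually required (for $F = T$, $\alpha = c$) read as $c \cdot T(p) = p_T$, $0_T \cdot c = T(0)$, and $c_2 \cdot T(+) = +_T \cdot c$. These are obtained by pre- or post-composing with $c$ (respectively $c_2$) and invoking $c^2 = 1$ (and $c_2^2 = 1$, which follows since $c_2$ is the pullback-induced map of $c$ and $T$ preserves the relevant pullbacks). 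Equivalently, one can observe that $c^2 = 1$ makes $(c,1)$ its own inverse as an additive bundle isomorphism, which yields the ``reversed'' bundle morphism equations directly.

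For the coherences involving $\ell$ and $c$, no further work is needed. The $\ell$-coherence of the morphism asks that $c \cdot T(\ell) = \ell_T \cdot T(c) \cdot c_T$, which is exactly the third diagram in the ``coherence of $\ell$ and $c$'' clause of Definition~\ref{defnTangentCategory}. The $c$-coherence asks that $c_T \cdot T(c) \cdot c_T = T(c) \cdot c_T \cdot T(c)$, which is exactly the middle (Yang--Baxter style) diagram in the same clause. Care is needed simply to match notation: the $\ell$ at the top of the morphism square is to be read as $\ell_T$ (since it is $\ell$ applied to $TF = T^2$), and the composite $(T\alpha)(\alpha T')$ as $T(c) \cdot c_T$.

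Finally, the transverse condition (that $T$ sends the chosen transverse system to itself) is part of the definition of a tangent category equipped with a transverse system, and likewise in the display case $T$ is required to preserve display maps by hypothesis. So no additional work is required in the enriched settings. The main ``obstacle'' is really just bookkeeping: being careful with composition order, with the use of $c = c^{-1}$ to flip direction, and with distinguishing $c$ at different indices (e.g.\ $c_T$ versus $T(c)$). No genuine mathematical difficulty arises --- this is a translation lemma.
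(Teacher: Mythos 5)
Your proposal is correct and follows the same route as the paper: the paper's proof likewise observes that each coherence equation for the pair $(T,c)$ is (up to composing with $c$ and using $c^2=1$) one of the tangent category axioms — citing in particular $cT(\ell) = \ell_T T(c) c_T$ and the Yang--Baxter identity for the last two squares — and that invertibility of $c$ plus $T$'s preservation of the required pullbacks gives strength. Your write-up simply spells out the $p$, $0$, and $+$ cases that the paper leaves implicit.
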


\proof
For $T_1=T$ each of the equations required to be a morphism of tangent categories is actually one of the axioms for a tangent category - for example, the last two equations for morphisms of tangent categories require that $cT(\ell) = \ell T(c)c$ and $T(c)cT(c) = cT(c)c$.  As $c$ is invertible and $T$ preserves the necessary pullbacks, it is a strong morphism of tangent structure.   
\endproof

It is straightforward to show that all these morphisms compose in the obvious way; thus, each $T^n$ is also a morphism of tangent categories. 

\begin{definition}
A {\bf transformation} between morphisms of tangent categories $\gamma: F \to G$ is a natural transformation $\gamma$ such that 
$$\xymatrix{T'(F(M)) \ar[d]^{\alpha^F_M} \ar[rr]^{T'(\gamma_M)} & & T'(G(M)) \ar[d]^{\alpha^G_M} \\ 
                    F(T(M)) \ar[rr]_{\gamma_{T(M)}} & & G(T(M))}$$
commutes.   
\end{definition}
A basic example of such a transformation is $p: T \to {\sf Id}$.  The required diagram is just $cT(p) = p$.   It is now straightforward to check:

\begin{lemma}
Multiple transverse pullbacks of (strong, cartesian, co-) morphisms of tangent categories are (respectively, strong, cartesian,co-) morphisms of tangent categories.
\end{lemma}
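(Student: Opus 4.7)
The plan is to exploit the fact that the data of a morphism of tangent categories --- a natural transformation $\alpha: TF \to FT'$ satisfying a list of coherence equations --- is natural-transformation data, so pullbacks in the functor category can be computed pointwise. Suppose we have a transverse-pullback diagram of morphisms $(F_i,\alpha_i): (\X,\T)\to (\X',\T')$ with limit functor $F:\X\to \X'$. First I would observe that, because each $F_i$ is transverse and the diagram is a transverse pullback at every object, the induced functor $F$ is transverse as well. Then I would use the assumption that $T'$ preserves transverse pullbacks (together with the transversality of each $F_i$) to conclude that $FT'$ is the pointwise transverse pullback of the $F_iT'$, and similarly $TF$ is the pullback of the $TF_i$. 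The family $\alpha_i: TF_i\to F_iT'$, composed with the appropriate projections from $TF$, forms a cone, and the universal property of $FT'$ then induces a unique $\alpha: TF\to FT'$.

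Next I would check the five coherence diagrams for $\alpha$ (those involving $p$, $0$, $+$, $\ell$, and $c$). In each case both sides of the equation are maps into a pullback object (one of $F$, $FT'$, $F(T')^2$, or $T_2'F$), so it suffices to verify equality after post-composing with each projection to the corresponding $F_i$-leg. After such post-composition the equation reduces to the coherence for $\alpha_i$, which holds by assumption. For the additivity diagram one also uses that $T'$ preserves the pullback $T_2'$ so that the map $\alpha_2$ is determined by its projections.

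For the \emph{strong} case, each $\alpha_i$ is an iso, so the induced map $\alpha: TF\to FT'$ between two pullbacks of isomorphic diagrams (via a natural family of isomorphisms) is itself an iso. For the \emph{cartesian} case, the naturality square of $\alpha$ at any map $f: M\to N$ can be expressed as a cone whose projections onto each $F_i$ are the naturality squares of $\alpha_i$, each of which is a transverse pullback in $\X'$; since transverse pullbacks are closed under multiple transverse pullbacks of diagrams (this is precisely what the closure axioms of a transverse system give), the naturality square of $\alpha$ is also a transverse pullback. For the \emph{co-}morphism case, one runs the same argument with $\beta: FT\to T'F$ in place of $\alpha$, using now the transversality of the source functor $T$ to guarantee that $FT$ is the appropriate pullback.

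The main obstacle will be correctly formulating what ``multiple transverse pullback of morphisms of tangent categories'' means: specifically, that the indexing diagram of functors has a transverse pullback pointwise, and that this pointwise pullback assembles into a functor to which all the tangent-categorical operations can be transported. Once that setup is in place, the verification reduces to systematic universal-property chases; no single step is surprising, but one must be careful that the transverse system on $\X'$ contains enough pullbacks for every coherence to make sense --- which is exactly what the axioms of a transverse system and the transversality of $T'$ provide.
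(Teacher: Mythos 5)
The paper never actually proves this lemma: it is stated immediately after ``It is now straightforward to check,'' so there is no argument of the authors' to compare yours against, only the subsequent application (that each $(T_n,c_n)$ is a strong tangent morphism). Your outline is clearly the intended one: realize $F$ as the pointwise multiple pullback, note that $F(TM)$ is by construction the pullback of the $F_i(TM)$ while $T'(FM)$ agrees with the pullback of the $T'(F_iM)$ because $T'$ preserves ${\cal Q}'$-transverses, induce $\alpha$ by the universal property, and verify each coherence equation by post-composing with the projections into the $F_i$-legs. One small point to make explicit: for the family $(\alpha_i)$ to ``form a cone'' you need the connecting $2$-cells of the diagram being pulled back to be \emph{transformations of tangent morphisms} in the paper's sense (i.e.\ to commute with the $\alpha_i$); this is part of what ``multiple transverse pullback of morphisms of tangent categories'' must mean, and it is exactly what makes the induced $\alpha$ well defined.

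The one step that does not go through as written is your justification of the Cartesian case. The parenthetical claim that ``transverse pullbacks are closed under multiple transverse pullbacks of diagrams (this is precisely what the closure axioms of a transverse system give)'' overstates axioms (a)--(d): they give closure under reindexing along pullback-preserving functors, under isomorphism, and assembly of a higher cube from its faces, but none of them says that a pointwise limit of transverse squares is again a transverse square. The obvious attempt to extract this from condition (d) is circular, since the naturality square of $\alpha$ is itself one of the faces of the cube whose transversality you would need. Commutation of limits does show that the square is a \emph{pullback}; its membership in ${\cal Q}'$ needs either an additional closure hypothesis or a case-specific argument. Note that for the strong case this issue evaporates --- a square two of whose parallel sides are isomorphisms is always a transverse by (b) and (c), and strong morphisms are automatically Cartesian --- which is all the paper actually uses in the corollary about $(T_n,c_n)$.
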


An immediate corollary of this is:

\begin{corollary} 
Each  $(T_n,c_n)$ is a strong tangent morphism.
\end{corollary}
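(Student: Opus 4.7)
The plan is to invoke the immediately preceding lemma: since $(T,c)$ has already been shown to be a strong morphism of tangent categories, and the functor $T_n$ arises as a multiple transverse pullback built from copies of $(T,c)$, the corollary should follow immediately.

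More explicitly, I would first observe that $T_n$ is, by the very definition of tangent structure, the $n$-fold wide pullback of the natural transformation $p \colon T \Rightarrow I$ over itself, taken in the functor category $[\X,\X]$. The required pullbacks exist and are preserved by each $T^m$ by the pullback power axiom on $p$, so they are transverse in the sense needed. Moreover the $n$ projection legs of this diagram are all instances of $p \colon (T,c) \to (I,1)$, which qualifies as a transformation of tangent morphisms because the required coherence $T(p)\, c = p_T = 1 \cdot p_T$ is one of the defining equations for the canonical flip. The constant leg $(I,1)$ is of course the (strict) identity tangent morphism.

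At this point the preceding lemma applies directly: a multiple transverse pullback of strong morphisms of tangent categories along a strict morphism is again a strong morphism. The resulting pullback is $(T_n, c_n)$, where $c_n \colon T \cdot T_n \Rightarrow T_n \cdot T$ is the natural isomorphism induced by the universal property of $T_nT$ applied to the $n$ component maps $T T_n \to T^2 \xrightarrow{c} T^2$. Concretely, because $T$ preserves the defining pullback of $T_n$, the object $T(T_n M)$ is the wide pullback of $n$ copies of $T(p_M)$, and $c$ componentwise yields a cone over $n$ copies of $p_{TM}$, whose apex is exactly $T_n(TM)$.

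I do not expect any real obstacle here; each coherence axiom for $(T_n,c_n)$ to be a strong morphism can be checked by post-composing with the $n$ pullback projections, where it reduces to the corresponding axiom for $(T,c)$. The only points worth flagging are (i) confirming that $p \colon T \to I$ is genuinely a transformation of tangent morphisms in the sense of the definition given just above, and (ii) noting that invertibility of each $c$ component forces invertibility of $c_n$ by the universal property, so the resulting morphism is indeed strong rather than merely lax.
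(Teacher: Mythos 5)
Your proof is correct and follows essentially the same route as the paper: both invoke the preceding lemma by exhibiting $T_n$ as the wide transverse pullback of $p \colon (T,c) \to (\mathsf{Id},1)$ over itself, with $c_n$ the induced componentwise isomorphism. The only nitpick is the coherence you cite for $p$ being a transformation of tangent morphisms, which should read $c\,T(p) = p_T$ (equivalently $c\,p_T = T(p)$, the additive-bundle-morphism axiom for the canonical flip) rather than $T(p)\,c = p_T$, but this does not affect the argument.
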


\proof
Observe that the identity functor is certainly a strong morphism and that $p: T \to {\sf Id}$ clearly has wide pullbacks over itself transverse. The unique map 
$$(c_n)_M : T^2(M) \x_{T(M)} ... \x_{T(M)} T^2(M)  \to T(T(M) \x_M ... \x_M T(M))$$ 
which applies $c$ on each coordinate is certainly an isomorphism.
\endproof

Further important examples of a strong morphism of tangent categories are the product and final functors for Cartesian differential categories  
$$\_\x\_: \X \x \X \to \X ~~~~~~1:  {\mathbf 1}  \to \X$$
saying that the product is preserved by the tangent structure, of course, also means the tangent is preserved by the product.


\subsection{Differential bundles revisited}

Having introduced transverse systems and display system it is necessary to revisit the definition of a differential bundle with a view to understanding how these 
structures interact with that definition.  The key point is that all the pullback diagrams mentioned in the definition must now be transverse:

\begin{definition}
A \textbf{differential bundle} in a tangent category $(\X,\T)$ with a transverse system $\cal Q$ consists of an additive bundle on a map $q$ together with a lift map $\lambda$:
$${\mb q} := (q: E \to M, \sigma: E_2 \to E, \zeta: M \to E, \lambda: E \to T(E))$$ 
such that
\begin{itemize}
        \item Finite multiple pullbacks of $q$ along itself are transverse.
        \item $(\lambda,0): (E, q,\sigma,\zeta) \to (TE, T(q), T(\sigma), T(\zeta))$ is an additive bundle morphism.
	\item $(\lambda,\zeta): (E, q,\sigma,\zeta) \to (TE, p,+,0)$ is an additive bundle morphism.
	\item The {\bf universality of the lift}: 
	The following diagram is a transverse pullback:
	       $$\xymatrix{E_2 \ar[d]_{\p_0q=\pi_1q} \ar[rr]^{\mu} &  & T(E) \ar[d]^{T(q)} \\ M \ar[rr]_{0} & & T(M)}$$
	       where $\mu: E_2 \to TE$ is defined by $\mu := \<\p_0\lambda, \p_10\>T(\sigma)$.
	\item The equation $\lambda \ell_E = \lambda T(\lambda)$ holds.
\end{itemize}
If $(\X,\T)$ is a display tangent category, $\mb{q}$ is a {\bf display} differential bundle if $q$ is a display map.

Morphisms of differential bundles and linear morphisms of differential bundles are defined as in the original definition (\ref{defnDiffBundles}).  

\end{definition}

Note that in a display tangent category, we do not demand that the projection $q: E \to M$ of a differential bundle be a display map.  One argument for avoiding this demand is because we should like $p:T(M) \to M$ to be a differential bundle, and for various reasons we have not demanded that $p$ be a display map.  That said, when $q$ is a display map, that is when we have a \emph{display} differential bundle, then the definition of such a differential bundle need not include the requirements that multiple pullbacks of $q$ exist and are transverse, nor that the diagrams associated to the universality of the lift are transverse: all these will be automatic.   Furthermore display differential bundles can always be pulled back.   Thus, these bundles have particularly nice properties and indeed this is part of the point of much of the work that follows.  



An important observation, which allows the construction of many examples of differential bundles, is:

\begin{proposition}\label{preservationOfDiffBundles}
If $(F,\alpha): \X \to \X'$ is a Cartesian morphism of transverse tangent categories then there is a functor 
$${\sf DBun}(F): {\sf DBun}(\X) \to {\sf DBun}(\X'); $$
$$ ~~~~~\mb{q} \mapsto  (Fq,F(\zeta)\alpha,F(\sigma)\alpha, F(\lambda)\alpha)$$ 
Furthermore, when $(f,g): \mb{q} \to \mb{q'}$ is a linear bundle morphism then ${\sf DBun}(F)(f,g) = (F(f),F(g))$ is a linear bundle morphism.
\end{proposition}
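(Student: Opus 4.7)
The plan is to verify, clause by clause, that the proposed image structure $(F(q), F(\sigma), F(\zeta), F(\lambda)\alpha_E)$ satisfies the definition of differential bundle in $\X'$, using throughout (a) functoriality of $F$, (b) transverseness of $F$ to carry the transverse pullbacks occurring in the definition of differential bundle to transverse pullbacks in $\X'$, and (c) the coherence equations making $(F,\alpha)$ a morphism of tangent categories. In particular, transverseness is precisely what is needed to produce the canonical isomorphism $F(E_2) \cong F(E)\times_{F(M)} F(E)$, so that $F(\sigma)$ may legitimately be read as the addition $F(E)_2 \to F(E)$ of the new bundle, while at the same time preserving the pullback that witnesses the universality of lift.

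I would first check that $(F(q), F(\sigma), F(\zeta))$ is an additive bundle. The pullback powers of $q$ are transverse by the definition of differential bundle, so $F$ sends them to transverse pullbacks in $\X'$, and the monoid equations for $F(\sigma), F(\zeta)$ descend from those for $\sigma, \zeta$ by applying $F$. Next I would verify that $\lambda' := F(\lambda)\alpha_E$ gives the two required additive bundle morphisms and satisfies the coherence $\lambda'\ell'_{F(E)} = \lambda' T'(\lambda')$. Each of these reduces, after applying $F$ to the corresponding equation for $\lambda$ in $\X$, to a manipulation using naturality of $\alpha$ and the coherences of $\alpha$ with $p, 0, +, \ell$; for example $\lambda' T'(F(q)) = F(\lambda)\alpha_E T'(F(q)) = F(\lambda T(q))\alpha_M = F(q)F(0_M)\alpha_M = F(q)\,0'_{F(M)}$. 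The lift coherence is the most finicky, since one has to juggle $\alpha_E$ with $T'(\alpha_E)$ and $F(\ell)$, but the coherence axiom for $\ell$ is designed exactly for this.

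The heart of the argument is universality of lift for the image bundle. The universality square for $\mb{q}$ is transverse in $\X$, so transverseness of $F$ carries it to a transverse pullback in $\X'$ with apex $F(E_2)$, top edge $F(\mu)$, and bottom edge $F(0_M)$. Pasting this with the naturality square of $\alpha$ at $q$ — which is a transverse pullback precisely by the Cartesian hypothesis on $(F,\alpha)$ — one obtains a transverse pullback square whose top edge is $F(\mu)\alpha_E$ and whose bottom edge is $F(0_M)\alpha_M = 0'_{F(M)}$. What remains is to identify $F(\mu)\alpha_E$ with $\mu'$ for the image bundle: expand $\mu = \langle\pi_0\lambda,\pi_1 0\rangle T(\sigma)$, apply $F$, and push $\alpha_E$ through using naturality of $\alpha$ at $\sigma$ and the coherence for $0$. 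This bookkeeping is the main obstacle of the proof, though the ingredients are standard.

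Finally, for a linear bundle morphism $(f,g): \mb{q} \to \mb{q'}$, the square $F(f)F(q') = F(q)F(g)$ is immediate from applying $F$ to $fq' = qg$, and the preservation of lift $F(f)\,\lambda_{{\sf DBun}(F)(\mb{q'})} = \lambda_{{\sf DBun}(F)(\mb{q})}\,T'(F(f))$ follows by applying $F$ to $f\lambda' = \lambda T(f)$ and invoking naturality of $\alpha$ at $f$. Functoriality of ${\sf DBun}(F)$ on identities and composites of linear morphisms is then immediate from functoriality of $F$.
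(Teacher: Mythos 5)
Your proposal is correct and follows essentially the same route as the paper: the equational axioms are dispatched by functoriality, naturality of $\alpha$, and the coherences for a morphism of tangent categories, while the universality of the lift is obtained by pasting $F$ applied to the original universality pullback (transported across the canonical isomorphism $(FE)_2 \cong F(E_2)$ supplied by transverseness) with the naturality square of $\alpha$ at $q$, which is a transverse pullback precisely by the Cartesian hypothesis. The paper's proof is exactly this three-square pasting argument.
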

\proof
The equational axioms for $F(\mb{q})$ to be a differential bundle and for $(Ff,Fg)$ to be a linear bundle morphism follow in a straightforward fashion from the equations for $(F,\alpha)$ being a morphism of tangent categories and naturality of $\alpha$.  The only slight difficulty is the universal of the lift for $F(\mb{q})$; this asks that
\[
\bfig
	\square<500,350>[(FE)_2`T'FE`FM`T'FM;\mu'`\pi_0 F(q)`T'(Fq)`0']
\efig
\]
be a pullback; however, we may re-express this diagram as the outer square of the composite
\[
\bfig
	\square<500,350>[(FE)_2`F(E_2)`FM`FM;`\pi_0F(q)`F(\pi_0 q)`1]
	\square(500,0)<500,350>[F(E_2)`FTE`FM`FTM;F\mu``F(Tq)`F(0)]
	\square(1000,0)<500,350>[FTE`T'FE`FTM`T'FM;\alpha``T'(Fq)`\alpha]
\efig
\]
the leftmost square has isomorphisms as the top arrows, the middle square is a pullback since $F$ preserves the pullback diagram for $T(\mb{q})$, and the rightmost square is a pullback (and transverse) as the morphism is cartesian.  Thus the entire square is a pullback, as required.  
\endproof

The following result relates the bracketing operation of a differential bundle $\sf q$ to the bracketing operation of the differential bundle  $F({\sf q})$ (for $F$ a Cartesian morphism).  

\begin{lemma}\label{lemmaBracketT}
Suppose that ${\sf q}$ is a differential bundle, $f: X \to TE$ equalizes $T(q)$ and $pq0$ and $(F,\alpha): \X \to \Y$ is a Cartesian morphism of tangent categories.   Then $F(\{f\}) = \{F(f)\alpha\}$.
\end{lemma}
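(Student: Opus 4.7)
The plan is to apply the universal characterisation of the bracket given in Lemma \ref{lemmaVerticalLift}(ii) inside the pushed-forward differential bundle $F(\mb{q})$ from Proposition \ref{preservationOfDiffBundles}, whose lift map is $F(\lambda)\alpha$. The argument reduces to two checks: (a) that $F(f)\alpha$ equalises $T'(Fq)$ and $p'(Fq)0'$, so that $\{F(f)\alpha\}$ is actually defined in $F(\mb{q})$; and (b) that $F(\{f\})$ witnesses the defining equation
\[ F(f)\alpha \;=\; \<F(\{f\})\,(F\lambda\cdot\alpha),\; F(f)\alpha\, p'\, 0'\>\, T'(F\sigma). \]
Uniqueness in Lemma \ref{lemmaVerticalLift}(ii) then forces $F(\{f\}) = \{F(f)\alpha\}$.

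For (a), a single chain using naturality of $\alpha$ at $q$ together with the morphism-of-tangent-categories coherences $\alpha p' = Fp$ and $F(0)\alpha = 0'$ does the job:
\[ F(f)\alpha\, T'(Fq) \;=\; F(fT(q))\alpha \;=\; F(fpq0)\alpha \;=\; F(fpq)\,0' \;=\; F(f)\alpha\, p'(Fq)\,0', \]
where the second equality is the assumption on $f$. For (b), apply $F$ to $f = \<\{f\}\lambda, fp0\>T(\sigma)$ and postcompose with $\alpha$; naturality of $\alpha$ at $\sigma$ converts the trailing $F(T\sigma)\alpha$ into $\alpha_{E_2}\,T'(F\sigma)$, so that $F(f)\alpha = F(\<\{f\}\lambda, fp0\>)\,\alpha_{E_2}\, T'(F\sigma)$. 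The factor $F(\<\{f\}\lambda, fp0\>)\,\alpha_{E_2}$ then rewrites as the pair $\<F(\{f\}\lambda)\alpha,\, F(fp0)\alpha\>$ into $T'((FE)_2)$: postcomposing with the projections $T'(F\pi_i)$ and using naturality of $\alpha$ at $\pi_i$ produces the right coordinates, once we know $F$ preserves the pullback $E_2$ and the $T$-preserved pullback $T(E_2)$. The coherences then finish the computation: $F(\{f\}\lambda)\alpha = F(\{f\})(F\lambda\cdot\alpha)$ and $F(fp0)\alpha = F(f)F(p)\,0' = F(f)\alpha\, p'\,0'$, which is precisely the required equation.

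The one piece of bookkeeping that needs care is the reindexing $F(\<-,-\>)\alpha = \<-\alpha,\,-\alpha\>$, since the left-hand pair lives in $F(T(E_2))$ while the right-hand one lives in $T'((FE)_2)$. The identification of these two pullbacks, and the fact that it is compatible with the $\alpha$-projections, is exactly what the Cartesian and transverse hypotheses on $(F,\alpha)$ deliver, so the entire proof collapses to the coherence calculation sketched above.
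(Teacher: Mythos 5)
Your proposal is correct and follows essentially the same route as the paper's proof: first verify that $F(f)\alpha$ satisfies the equalizing condition via naturality of $\alpha$ and the tangent-morphism coherences, then show $F(\{f\})$ satisfies the defining equation of the bracket for the bundle $F({\sf q})$ by applying $F$ to $f = \<\{f\}\lambda, fp0\>T(\sigma)$ and shuffling $\alpha$ through by naturality. Your extra remark about identifying $F(T(E_2))$ with $T'((FE)_2)$ is a point the paper's computation passes over silently, but it is handled exactly as you say by the Cartesian/transverse hypotheses.
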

\begin{proof}
We first need to show $\{F(f)\alpha\}$ is well-defined; that is, we need to show that 
	\[ F(f)\alpha T(F(q)) = F(f)\alpha T(F(q))p0. \]
Indeed,
\begin{eqnarray*}
F(f)\alpha T(F(q) )
& = & F(f)F(T(q))\alpha \mbox{ (naturality of $\alpha$)} \\
& = & F(fT(q))\alpha \\
& = & F(fT(q)p0)\alpha \mbox{ (assumption on $f$)} \\
& = & F(f)F(T(q))F(p)F(0)\alpha \\
& = & F(f)F(T(q))\alpha p0 \mbox{ (coherences on $\alpha$)} \\
& = & F(f)\alpha T(F(q))p0 \mbox{ (naturality of $c$)}
\end{eqnarray*}
We now check $F(\{f\})$ has the same universal property as $\{T(f)\alpha\}$:
\begin{eqnarray*}
\lefteqn{\<F(\{f\})F(\lambda)\alpha, F(f)\alpha p0\>T(F(\sigma)) }\\
& = & \<F(\{f\}\lambda)\alpha,F(f)F(p)F(0)\alpha \> T(F(\sigma)) \mbox{ (coherence for $\alpha$)} \\
& = & \<F(\{f\}\lambda),F(fp0) \> F(T(\sigma))\alpha \mbox{ (naturality of $\alpha$)} \\
& = & F(\<\{f\}\lambda,fp0\>T(\sigma))\alpha \\
& = & F(f) \alpha
\end{eqnarray*}
as required. 
\end{proof}



As the product functor for cartesian differential categories is a strong morphism, it follows that the product of two differential bundles is also a differential bundle.


\section{Tangent fibrations and differential fibrations}\label{secTanFibrations}

 \subsection{Tangent fibrations}

This paper's main contribution so far is the abstract formulation of differential bundles for tangent categories.  This section describes some basic aspects of the general theory of tangent fibrations in order to place this development in a wider context.     It will be assumed for this section that the reader is familiar with the general theory of fibrations.  For the reader less familiar with these notions we recommend \cite{bartjacobs} or \cite{borceaux2}.  

Before we start it is necessary to make some remarks about the closure of transverse systems in fibrations.  Suppose we have a fibration $\partial: (\X,{\cal Q}) \to (\B,{\cal Q}')$ between categories with transverse systems such that $\partial$ is a transverse functor.  Suppose also that $q: {\mathbf 2}^n \to \X$ is transverse in $\X$, $q': {\mathbf 2}^n \to \B$ is transverse in $\B$, and $\alpha: q' \Rightarrow q \partial $ is a natural transformation of transverses.  From this data we can define a functor $\alpha^{*}(q): {\mathbf 2}^n \to \X$ with $\alpha^{*}(q)\partial = q'$: each vertex $v \in {\mathbf 2}^n$ determines the vertex $\alpha^{*}(q)(v)$ by setting it equal to $\alpha_v^{*}(q(v))$, and the maps between the vertexes are then determined by requiring that they sit above the corresponding map of $q'$.

By construction $\alpha^{*}(q)$ is a functor and preserves all the limits that $q$, $q\partial$, and $q'$ preserve.  (For transverses it suffices to show that pullbacks will be preserved by condition (d) of the definition of a transverse system.)  

\begin{definition} 
With the definitions above, we call $\alpha^*(q)$ a {\bf substituted transverse}. 
\end{definition}

We will use these substituted transverses repeatedly in the sequel.  It is  easy to see that if $(F,F'): \partial \to \partial'$ is a morphism of fibrations in which all re-indexing functors are transverse then this will still be the case if we add the substituted transverses to the transverse system in $\X$.  Thus, in a fibrational setting it makes sense  to insist that the transverse system of the total category include substituted transverses.  Accordingly, we make the following definitions.

\begin{definition}  Suppose $\partial: \X \to \B$ is a fibration.
\begin{itemize}
	\item If $(\X,{\cal Q})$ and $(\B,{\cal Q}')$ are transverse systems, then $\partial$ is a \textbf{transverse fibration} if $\partial$ is a transverse functor and ${\cal Q}$ includes all subsituted transverses.
	\item If, in addition, $(\X,{\cal Q})$ and $(\B,{\cal Q}')$ have display systems ${\cal D}$ and ${\cal D'}$, then $\partial$ is a \textbf{display fibration} if $\partial$ is a display functor.
	\item If $(\X,\T)$ and $(\B,\T')$ are tangent categories, then $\partial$ is a \textbf{tangent fibration} if $\partial$ is a strict morphism of tangent categories and $(T,T')$ is a morphism of fibrations.
	\item A \textbf{display tangent fibration} is simply a tangent fibration which is also a display fibration.
\end{itemize}
\end{definition}

A consequence of $(T,T')$ being a morphism of fibrations is that the functors $(T_n,T'_n): \partial \to \partial$ are also morphisms of fibrations: the transformations associated with the tangent structure all then become fibred transformations between these morphisms of fibrations.  

One of the major results of this section is the following:
 
 \begin{theorem} \label{tangent-substitution}
In any (display) tangent fibration
$$\partial: (\X,\T) \to (\B,\T')$$ 
\begin{itemize}
	\item each fibre $\partial^{-1}(M)$ can be given the structure of a (display) tangent category; we call this the \textbf{vertical tangent structure} on $\partial^{-1}(M)$, 
	\item the substitution functors $h^*: \partial^{-1}(M) \to \partial^{-1}(M')$ are strong (display) tangent morphisms.
\end{itemize}
 \end{theorem}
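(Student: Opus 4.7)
The strategy is to define the vertical tangent functor on each fibre by reindexing along the zero section. For $E\in\partial^{-1}(M)$, set $T^v(E) := 0_M^{*}(T(E))$, with Cartesian lift $u_E: T^v(E)\to T(E)$ over $0_M: M\to T'(M)$. Functoriality is immediate from the universal property of Cartesian morphisms. More generally, for the pullback powers $T_n$ of the projection and for the pullbacks induced by universality of lift in $\X$, one takes the reindexing along the appropriate zero section $0^{(n)}_M: M \to T'_n(M)$ to obtain the fibred counterparts $T^v_n(E)$. That these assemble into functors, and that $T^v_n$ is naturally a wide pullback of $T^v$, is forced by the fact that $(T,T')$, and hence also $(T_n,T'_n)$, are morphisms of fibrations, so $T$ sends Cartesian lifts to Cartesian lifts.

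Next, the vertical natural transformations $p^v,0^v,+^v,\ell^v,c^v$ are induced from those in $\X$ using the universal property of Cartesian lifts. For instance, $p^v_E := u_E\cdot p_E$, which sits over $0_M\cdot p_M=1_M$ and hence is vertical. Dually, $0^v_E: E\to T^v(E)$ is the unique vertical factoriser of $0_E$ through $u_E$; existence requires checking $\partial(0_E)=0_M=1_M\cdot 0_M$, which holds. The lift $\ell^v$ is obtained by factoring $u_E\cdot\ell_E$ through the iterated Cartesian lift $u_{T^v E}\cdot T(u_E): (T^v)^2(E)\to T^2(E)$ over $0_M\cdot T'(0_M)$; the required equality of base maps is exactly the tangent identity $0\ell' = 0\cdot T'(0)$ (one of the unit-preservation laws for the additive bundle morphism $(\ell,0)$). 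The flip $c^v$, addition $+^v$, and the remaining coherences are handled in the same manner.

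The tangent category axioms for $T^v$ then follow formally: take the corresponding axiom in $\X$ (say $\ell c = \ell$ or coassociativity), compose on the right with the appropriate iterated Cartesian lift so that both sides land in $T^n(E)$, apply the $\X$-axiom to collapse the resulting diagram, and invoke uniqueness of Cartesian factorisations through that same iterated lift to descend the identity into $\partial^{-1}(M)$. The delicate point is universality of the vertical lift, which requires the defining square for $v^v: T^v_2(E)\to (T^v)^2(E)$ to be a transverse pullback in the fibre that is preserved by every $(T^v)^n$. This pullback is obtained from the universality pullback for $v$ in $\X$ by reindexing along $0^{(2)}_M$, and lies in ${\cal Q}$ precisely because the transverse system on $\X$ is required to contain all substituted transverses; preservation by $T^v_n$ reduces to preservation by $T_n$ of the ambient substituted transverse, which is part of $T_n$ being a transverse morphism of fibrations.

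Finally, for $h: M'\to M$ in $\B$, the Beck--Chevalley isomorphism $h^{*}T^v \cong T^v h^{*}$ arises by comparing the two ways of building Cartesian lifts above the commuting square $h\cdot 0_M = 0_{M'}\cdot T'(h)$ (naturality of $0$ in $\B$), using that $T$ preserves Cartesian morphisms; uniqueness of Cartesian factorisations then turns the vertical transformations into natural isomorphisms compatible with $p^v,0^v,+^v,\ell^v,c^v$, making $(h^{*},\alpha)$ a strong morphism of tangent categories. In the display case, one takes the fibre display maps to be the vertical $\X$-display maps; $T^v$ preserves them because $T$ does and Cartesian lifts of $0_M$ along display maps are again display by closure of ${\cal D}$ under pullback. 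The main obstacle throughout is the universality of the lift in the fibre; it is the hypothesis that ${\cal Q}$ contain all substituted transverses, imposed precisely for this purpose, that makes the argument go through.
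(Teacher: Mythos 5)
Your proposal follows essentially the same route as the paper: defining the vertical tangent functor by reindexing along the zero section, inducing the structural transformations by factoring through Cartesian lifts over the pointed cones $0$, $0T'(0)$, etc. (using that the zeros are fixed points of the tangent transformations), securing universality of the lift via the substituted-transverse hypothesis, obtaining strong substitution functors from naturality of $0$, and taking the vertical display maps as the fibre display system. The argument is correct as stated.
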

\begin{proof}
We must first define the tangent structure $\T_M$ on a fibre $\partial^{-1}(M)$.  If  $\partial(E)=M$, define $T_M(E):= (0_M)^{*}(T(E))$.  Define $(p_M)_E :=  0^{*}_{T(E)}p_E$.  Note that this can be regarded as the projection $p_E: T(E) \to E$ substituted along the cone 
 $$\xymatrix@R=10pt{T_M(E) \ar@{..>}[dd]_{(p_M)_E} \ar[rr]^{0^{*}_{T(E)} } & & T(E) \ar[dd]_{p_E} \\ \\
                      E \ar@{=}[rr]  & ~ \ar[d]_{\partial} & E \\
 & ~ & T'(M) \ar[dd]^{p_M} \\ M \ar[urr]^{0}  \ar@{=}[drr] \\  & & M} $$
 The fact that $T$ is functorial and  $p$ is natural immediately shows that $T_M$ is a functor.  The fact that we are substituting in this manner immediately means that, in the fibre over $M$, wide pullbacks of $(p_M)_E$ are in the transverse system, thus $T$ preserves them, and from this one can easily show $T_M$ preserves them.  The remaining required transformations for tangent structure can be defined similarly by substitution over the cones:
 $$\xymatrix@R=10pt{&& T'(M) \ar[dd]^{\ell_M} \\ M \ar[rru]^{0} \ar[rrd]_{0T'(0)} \\ & & T'^2(M)}~~~~
     \xymatrix@R=10pt{ & &  T'^2(M) \ar[dd]^{c_M} \\ M \ar[rru]^{0T'(0)} \ar[rrd]_{0T'(0)} \\ & & T'^2(M)}$$
$$\xymatrix@R=10pt{ & & M \ar[dd]^0 \\ M \ar@{=}[rru] \ar[drr]_0 \\ & & T'(M)} ~~~~~ 
     \xymatrix@R=10pt{ & & T'_2(M) \ar[dd]^{+} \\ M \ar[rru]^{0_2} \ar[rrd]_{0} \\ & & T'(M)}$$
These use the fact that the zero maps provide a fixed point for the transformations in tangent structure.   This means all the coherence diagram can then be transmitted into the local fibre by substituting over these pointed cones and this also ensures that all the transverse (i.e. limit) information is preserved.  This implies immediately that the fibre with respect to this structure is a tangent category.

To show that the substitution functors are strong morphisms of the tangent structure it suffices to observe that all the substitution cones are natural in $M$ and thus transporting the substitutions along an $f$ will result in structure which is equivalent up to a unique vertical isomorphism.

To complete the proof of the theorem we must also discuss the display structure.  We define the display system of the fibre to simply be the vertical display maps: ${\cal D}_M := {\cal D} \cap \partial^{-1}(M)$.   Consider pulling back a vertical display map against a vertical map in $\X$.  Its pullback exists and is still a display map.  Moreover, by assumption, the pulback is preserved by $\partial$.  But in $\X'$ the pullback becomes a pullback of identity maps, so the pullback is itself vertical and so is contained in the fibre.

Now, we need to show that each substitution functor preserves these display maps and that each $T_M$ preserves the display maps.  For both of these, it is useful to first see that if $f: A \to B$ is a vertical map then 
 $$\xymatrix{ h^{*}(A) \ar@{..>}[d]_{h^{*}(f)} \ar[rr]^{h_A^{*}}    & & A \ar[d]^{f} \\
                      h^{*}(B) \ar[rr]_{h_B^{*}} & & B}$$
is a pullback.  Indeed, suppose there are $z_1: Z \to A$ and $z_2: Z \to h^{*}(B)$ such that $z_2h_B^{*} = z_1f$.  Then since $h_B^*$ is vertical, $\partial(z_1) = \partial(z_2)h$.  But then since $\partial$ is a fibration there is a unique map $k: Z \to h^*(A)$ with $kh^{*}_A=z_1$ and $\partial(k) = \partial(z_2)$.  But then $\partial(kh^{*}(f)) = \partial(z_2)$ since $h^{*}(f)$ is vertical, and
	\[ kh^{*}(f) h^{*}_B = k h^*_Af = z_1f = z_2 h^{*}_B \] 
so $kh^{*}(f)=z_2$.  This proves it is a pullback.

Then if $f$ is a vertical display map then $h^*(f)$ is the pullback of a display map and hence is itself a display map, so each subsitution functor is preserves display maps.

Finally, as $T$ preserves display maps and $T_M(f)$ is given by
 $$\xymatrix{T_M(A) \ar@{..>}[d]_{T_M(f)} \ar[rr]^{0^{*}_A} & & T(A) \ar[d]_{T(f)} \\
                     T_M(B) \ar[rr]_{0^{*}_B} & & T(B)}$$
then as $T_M(f)$ is the pullback of a display map it is also a display map.  So $T_M$ also preserves display maps, as required.  
\end{proof}
\textbf{Note}: the projections $p_M$ in the fibres need not be display maps, even if the projections in $\X$ and $\B$ are.

\begin{definition}
A (display) tangent fibration $\partial: \X \to \B$  is  a {\bf tangent bifibration} in case $\partial$ is a cofibration as well and cosubstitutions of transverses (display maps) are transverse (display maps).  
\end{definition}
If $\alpha: q'\partial \Rightarrow q$  is a a  natural transformation between transverses in $\B$ then bifibrational structure allows us to define  $\exists_\alpha(q')$ by dualizing the argument for substituting transverses.  Thus if $v$ is a vertex of ${\mathbf 2}^n$ we define $\exists_\alpha(q')(v)$ to be $\exists_{\alpha_v}(q'(v))$  and then the maps between these vertices are determined by requiring them to sit above those of $q$.  This certainly gives a functor $\exists_\alpha(q'): {\mathbf 2}^n \to \X$; however, there is no reason why it should preserve pullbacks or be an existing transverse.  The non-trivial requirement of being a tangent bifibration is that this cosubstitution is already a transverse (or a display) functor.
 
 Given a tangent bifibration there is a second way to induce tangent structure onto the fibres.  This is by cosubstituting the final transformations of the tangent structure given by the projections:
 $$\xymatrix@R=10pt{T(E) \ar[dd]_{p_E} \ar[rr]^{\exists_p^{T(E)} } & & \exists_p(T(E)) =T^M(E) \ar@{..}[dd]_{\widetilde{p_E}} \\ \\
                      E \ar@{=}[rr]  & ~ \ar[d]_{\partial} & E \\
                      T'(M) \ar[dd]_{p_M} \ar[rrd]^p  & ~ &\\ & & M \\ M \ar@{=}[rru] }$$
 The fact that $T$ is functorial and $p$ is natural immediately shows that $T^M$ is a functor.  The condition on cosubstituting means that, in the fibre over $M$, wide pullbacks of $(p^M)_E$ are in the transverse system, thus $T$ preserves them so easily $T^M$ preserves them (and similarly for display maps).  The remaining required transformations for tangent structure can be defined by cosubstitution over the cocones:
 $$\xymatrix@R=10pt{T'(M) \ar[dd]_{\ell_M} \ar[drr]^{p} \\ & & M \\ T'^2(M)  \ar[rru]^{p}}~~~~
     \xymatrix@R=10pt{T'^2(M) \ar[dd]_{c_M} \ar[rrd]^{pT'(p)} \\ & & M  \\ T'^2(M) \ar[rru]_{pT'(p)}}$$
$$\xymatrix@R=10pt{ M \ar[dd]^0 \ar@{=}[drr] \\ & & M  \\ T'(M) \ar[rru]_{p} } ~~~~~ 
     \xymatrix@R=10pt{T'_2(M) \ar[dd]^{+} \ar[rrd]^{\pi_0p=\pi_1p} \\  & & M   \\ T'(M) \ar[rru]_{p}}$$
this make $\T^M$ an alternative tangent structure on the fibres which we call the {\bf total} tangent structure.  This gives:

\begin{proposition}
In a (display) tangent bifibration the fibres have two induced tangent structures, the vertical tangent structure and the total tangent structure and there is a morphism 
of tangent structures given by the identity functor together with the natural transformation 
$$T_M(E) \to^{0^{*}} T(E) \to^{\exists_p} T^M(E).$$
\end{proposition}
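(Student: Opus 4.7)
The plan is to handle this in three pieces: first establish the total tangent structure on each fibre, then construct the natural transformation $\alpha$ and verify naturality, and finally check the coherence axioms for a morphism of tangent categories.

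For the first piece, the total tangent structure is obtained by a construction dual to Theorem \ref{tangent-substitution}. Instead of substituting over cones based at $M$ using the zero section $0: M \to T'(M)$, we cosubstitute over cocones with apex $M$ using the projection $p: T'(M) \to M$. Functoriality of $T^M$ follows from functoriality of $T$ together with naturality of the opcartesian liftings $\exists_p^{(-)}$. The hypothesis defining a tangent bifibration — that cosubstitutions of transverses are transverses (display maps) — is precisely what is needed to ensure the wide pullbacks of $\widetilde{p_E}$ and the universality-of-lift pullback exist in the fibre and are preserved by $T^M$. All coherence identities (for $\widetilde{\ell_M}, \widetilde{c_M}, \widetilde{+_M}, \widetilde{0_M}$) transport from the ambient tangent structure on $\X$ because the cocones over which we cosubstitute are fixed by the relevant structural transformations (since $p$ is a point for all of them via the axioms of tangent structure).

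For the second piece, define $\alpha_E := 0^*_{T(E)} \cdot \exists_p^{T(E)} : T_M(E) \to T^M(E)$. In $\X$, the first leg projects down to $0: M \to T'(M)$ and the second to $p: T'(M) \to M$, so $\alpha_E$ projects to $0 \cdot p = 1_M$ and hence lives in $\partial^{-1}(M)$. Naturality in $E$ follows from the universal properties defining $T_M(f)$ and $T^M(f)$: for any vertical $f:E \to E'$, both $T_M(f) \cdot 0^*_{T(E')}$ and $0^*_{T(E)} \cdot T(f)$ mediate the same cartesian square, so they agree, and dually on the $\exists_p$ side. Composing gives $T_M(f) \cdot \alpha_{E'} = \alpha_E \cdot T^M(f)$.

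For the third piece, we must verify the five coherence diagrams for $(\mathrm{Id}, \alpha)$ to be a morphism of tangent categories. The first-order coherences are immediate from the (co)universal properties: $\alpha_E \cdot \widetilde{p_E} = 0^*_{T(E)} \cdot \exists_p^{T(E)} \cdot \widetilde{p_E} = 0^*_{T(E)} \cdot p_E = (p_M)_E$ since $\exists_p^{T(E)}\widetilde{p_E} = p_E$ by construction of $T^M$; the zero coherence is dual; and the addition coherence follows because both $+_M$ and $\widetilde{+}$ are obtained by (co)substituting $+$ over the cones/cocones determined by the points $0: M \to T'_2(M)$ and $p_2: T'_2(M) \to M$, so $\alpha$ intertwines them. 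The hard part will be the coherences for the lift $\ell$ and the flip $c$, which involve the composite $(T_M\alpha)(\alpha T^M): T_M^2(E) \to (T^M)^2(E)$ and hence sit above $T^2$ rather than $T$. The strategy there is to factor $(T_M\alpha)(\alpha T^M)$ through $T^2(E)$ as the composite $0^*_{T^2(E)} \cdot T(\alpha_E) \cdot \exists_p^{T(\cdot)}$ using naturality, and then reduce each side of the coherence diagram to the ambient $\ell$ and $c$ coherences on $\X$; the (co)universal properties at both levels then force equality.

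The main obstacle is bookkeeping at the $T^2$ level for the $\ell$ and $c$ coherences: one has four different ``local'' versions of $T^2(E)$ (namely $T_M^2$, $T_M T^M$, $T^M T_M$, $(T^M)^2$) each obtained by a different choice of (co)substituting over one of the two cones $0 \cdot T'(0)$ or $p \cdot T'(p)$, and one must track which (co)cartesian lifts are used at each step. Once this is organised, the ambient naturality of $\ell$ and $c$, together with the coherence $T(\ell) c T(c) = c\ell$ already used in the excerpt, should collapse the diagrams. Finally, in the display case, since $0^*_{T(E)}$ is a pullback of a display map and $\exists_p^{T(E)}$ is preserved under the display hypothesis, no further checking for the display structure is required beyond what the bifibration already guarantees.
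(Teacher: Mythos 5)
Your proposal follows essentially the same route as the paper: both tangent structures are obtained by (co)substituting the ambient structure maps over the zero-section cones and projection cocones, $\alpha$ is the composite of the cartesian and opcartesian comparison maps, and the coherences are forced by the (co)universal properties. The paper in fact dispenses with the verification entirely, remarking only that the morphism property ``is immediate from its definition,'' so your more explicit accounting of where the $T^2$-level bookkeeping for $\ell$ and $c$ lives is consistent with, and somewhat more careful than, the published argument.
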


Notice that the fact that this is a morphism of tangent structures (which trivially is transverse and preserves display maps) is immediate from its definition. This, therefore gives us not only an example of a morphism of tangent structure which, in general, is not strong or Cartesian, but also an additional source of examples of categories with more than one tangent structure.

\begin{example}\label{tanFibrationExamples}{\em ~
\begin{enumerate}[{\em (i)}]
\item When $\X$ is a Cartesian differential category the simple fibration $\partial: S[\X] \to \X$ is a tangent fibration.  Recall that the category $S[\X]$ has object pairs of objects of $\X$, $(A,X)$, and a map $(f,h): (A,X) \to (B,Y)$ is a pair of maps $f: A \to B$ and $g: A \x X \to Y$ (here $A$ is regarded as a ``context'') with composition $(f,g)(f'g') = (ff', \<\pi_0f,g\>g')$ and identity $(1_A,\pi_1)$.   The functor $\partial$ 
simply picks the first coordinate.   It is not hard to see that this is a Cartesian differential category with $D(f,g):= (D(f),{\sf ex}D(g)): (A \x A,X \x X) \to (B,Y)$, where 
${\sf ex}: A \x A \x X \x X \to A \x X \x A \x X$ swaps the middle two coordinates.   The vertical tangent functor is obtained by substituting along the zero map: in the fibre over $A$ this gives the 
functor 
$$T_A: \partial^{-1}(A) \to \partial^{-1}(A); \begin{array}[c]{lcl} \xymatrix{ (A,X) \ar[d]_{(1,g)} \\ (A,Y)} \end{array} ~~ \mapsto ~~ 
                     \begin{array}[c]{lcl} \xymatrix{ (A,X \x X) \ar[d]^{(1,(\< 0,1\> \x 1){\sf ex}D(g))} \\ (A,Y \x Y)} \end{array}$$
This is precisely the ``partial derivative'' with respect to $X$ described in \cite{cartDiff}.
\item Consider a tangent category, $\X$, such as one arising from SDG, in which all maps can be viewed as display maps.  A bundle over $M$ is just a map $q: E \to M$ and the category of bundles, also known as the standard fibration,  $\partial_1: {\sf bun}(\X) = \X^{\mathbf 2} \to \X$ is also a tangent bifibration.  The tangent structure on $\X^{\mathbf 2}$ is given ``pointwise'' by $qp: qT \to q$ where $q: {\mathbf 2} \to \X$ (see Example \ref{tangent-category-examples} (iv)).  This gives two tangent structures on each slice category of a tangent category as noted in \cite{rosicky}, pages 4-5.  
\end{enumerate}
}
\end{example}

In an arbitrary tangent category, however, the standard fibration $\partial_1: {\sf bun}(\X) = \X^{\mathbf 2} \to \X$ is not a fibration as not all pullbacks need exist, and even if they do, it need not be a tangent fibration as $T$ need not preserve these pullbacks.  This is indeed one important reason for defining transverse and display systems for a tangent category.  

If we do have a display tangent category, then we can let ${\sf bun}_{\cal D}(\X)$ denote the full subcategory of the arrow category, ${\sf bun}(\X) = \X^{\mathbf 2}$ consisting of just display maps.  It is  well-known that this will result in a fibration $P: {\sf bun}_{\cal D}(\X)  \to \X$; however, we must also 
supply a transverse and display system. There are two possible canonical choices. The first, and more 
restrictive choice, insists that the display maps and maps involved in the transverse system are contained in the Cartesian maps (i.e. the systems 
only involve morphisms between bundles which are already pullback squares).  The second choice -- and the choice on which we will focus -- is to allow as 
transverse any cube whose base and total components are transverse in the original system and, similarly, as a display map any map whose base and 
total map is a display map.  That this indeed gives a display tangent category requires the use of the third bullet of Definition \ref{display-system} to ensure that 
pullbacks of display maps are display maps.

\begin{proposition} \label{displayed-bun}
If $\X$ is a display tangent category, then so is ${\sf bun}_{\cal D}(\X)$ (with the above display system) and the projection functor $P: {\sf bun}_{\cal D}(\X) \to \X$ is a tangent fibration.  
\end{proposition}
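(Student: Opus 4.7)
The plan is to define all tangent structure on $\mathsf{bun}_{\cal D}(\X)$ pointwise from that of $\X$ and then leverage the fact that $T$ preserves display maps to ensure all the requisite pullbacks lie in the chosen display and transverse systems.

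First I would define the tangent endofunctor $\widetilde{T}: \mathsf{bun}_{\cal D}(\X) \to \mathsf{bun}_{\cal D}(\X)$ on objects by $\widetilde{T}(q: E \to M) := T(q): T(E) \to T(M)$, and on a commuting square $(f,g)$ by $(T(f), T(g))$. This is well-defined because $\X$ is a display tangent category, so $T(q)$ is a display map whenever $q$ is. All of the tangent structure natural transformations $p, 0, +, \ell, c$ on bundles are taken to be the obvious pointwise squares; naturality in $\X$ guarantees that each such square commutes with the bundle projection, and all the tangent-category coherence axioms hold because they hold componentwise in $\X$.

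Next I would verify that the required pullbacks live in $\mathsf{bun}_{\cal D}(\X)$ and are transverse. The pullback powers of $p: \widetilde{T} \to I$ are formed componentwise from $T_n(E) \to T_n(M)$; since $T(q)$ is display in $\X$ and display maps are closed under pullback, each $T_n(q)$ is display, so these are objects of $\mathsf{bun}_{\cal D}(\X)$. The pullback witnessing universality of the lift is built componentwise from the universal-lift pullbacks of $E$ and $M$, which are transverse in $\X$. Under the chosen convention (a cube is transverse precisely when its base and total components are transverse in $\X$), these give genuine transverses in $\mathsf{bun}_{\cal D}(\X)$, and $\widetilde{T}$ preserves them because $T$ preserves the originals.

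For the fibrational structure, given $f: M' \to M$ in $\X$ and a display bundle $q: E \to M$, a cartesian lift is provided by the pullback square exhibiting $f^*(q): f^*(E) \to M'$, which is again display. Thus $P$ is a fibration. It is a strict morphism of tangent categories essentially by construction: $\widetilde{T}$ acts pointwise, so $P\widetilde{T} = TP$ on the nose, and every tangent structure transformation in $\mathsf{bun}_{\cal D}(\X)$ sits over the corresponding transformation in $\X$. That $(\widetilde{T}, T)$ is a morphism of fibrations reduces to $\widetilde{T}$ preserving cartesian squares; this is immediate since $T$ is a display functor on $\X$ and hence sends display-map pullback squares to display-map pullback squares.

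The main obstacle, as flagged in the statement, is verifying the third axiom of Definition~\ref{display-system} for $\mathsf{bun}_{\cal D}(\X)$: that a natural transformation between transverses in the arrow category which is display at every non-initial vertex is display at the initial vertex. Componentwise this reduces to the corresponding condition for $\X$, but because our cubes have \emph{both} a total and a base component, one must invoke the third bullet of the display-system axioms for $\X$ twice — once at the total level and once at the base level — and then observe that the resulting bundle morphism has both components display, hence is a display morphism in $\mathsf{bun}_{\cal D}(\X)$. This is precisely the condition that makes the second, more permissive choice of display system workable; the alternative (restricting to Cartesian squares) would sidestep this subtlety but also lose the generality needed downstream.
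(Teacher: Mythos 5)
Your proposal is correct and follows essentially the same route as the paper: the tangent structure on ${\sf bun}_{\cal D}(\X)$ is defined pointwise, pullbacks in the arrow category are computed componentwise, and the third display-system axiom of $\X$ is what guarantees that the various induced maps remain display (the paper invokes it for the apex bundle of a pullback of display bundles, while you invoke it, twice, to verify the third axiom for the new display system --- both checks are needed and both are instances of the same mechanism). The only imprecision is your claim that each $T_n(q)$ is display ``because display maps are closed under pullback'': $T_n(q)$ is the induced map between wide pullbacks rather than a literal pullback of $T(q)$ along a single map, so its display-ness also follows from the third axiom rather than from closure under pullback, but this is easily repaired with the tools you already have in hand.
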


\begin{proof}
This is mostly straightforward. For example, $T$ sends
\[
\bfig
\square<500,250>[E_1`E_2`M_1`M_2;f`q_1`q_2`g]
\place(750,125)[\mbox{to}]
\square(1000,0)<500,250>[TE_1`TE_2`TM_1`TM_2;Tf`Tq_1`Tq_2`Tg]
\efig
\]
and the projection of $Tq: TE \to TM$ to $q: E \to M$ is simply the pair $(p_E,p_M)$.  The other structural transformations are similarly defined, and as pullbacks in the arrow category are defined pointwise, all the axioms are immediate.  The fact that pullbacks of display maps are again display maps in ${\sf bun}_{\cal D}(\X)$ is a direct consequence of the third axiom for a display system.  
\end{proof}

We would like to restrict this fibration even further, from arbitrary bundles to differential bundles.  The fact that $T(\mb{q})$ is a differential bundle and linear bundle morphisms are preserved by $T$ (see proposition \ref{preservationOfDiffBundles}) means ${\sf DBun}(\X)$ is a tangent category as it is a full subcategory of $\X^{\mathbf 2}$ which is closed to the tangent structure.  Furthermore, ${\sf DBun}(\X)_{\sf Lin}$ is a tangent category: this requires checking that the vertical lift and canonical flip are linear morphisms which is straightforward.  Thus, ${\sf DBun}_{\cal D}(\X)$ and ${\sf DBun}_{\cal D}(\X)_{\sf Lin}$ are tangent categories as they are subcategories of $\X^{\mathbf 2}$ which are closed to the tangent structure.  This implies:

\begin{corollary}
$P: {\sf DBun}(\X) \to \X$ and $P: {\sf DBun}(\X)_{\sf Lin} \to \X$ are strong tangent functors.  Similarly $P: {\sf DBun}_{\cal D}(\X) \to \X$ and $P: {\sf DBun}_{\cal D}(\X)_{\sf Lin} \to \X$ are strong tangent functors.
\end{corollary}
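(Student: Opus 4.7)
The plan is to observe that the corollary is essentially immediate once one unwinds the definition of the tangent structure on ${\sf DBun}(\X)$ and its variants, and that in fact $P$ is a \emph{strict} (hence strong) morphism of tangent categories. So I will focus on identifying precisely why the equation $PT = TP$ holds on the nose and why the six coherence natural transformations project correctly.

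First I would recall that the tangent structure on each of the four categories in question is inherited from $\X^{\mathbf 2}$ via the fibrewise construction in Lemma \ref{lemmaTDiffBundle} and Proposition \ref{preservationOfDiffBundles}. Concretely, for $\mb{q} = (q : E \to M, \sigma, \zeta, \lambda)$ the tangent bundle object is $T(\mb{q}) = (T q : TE \to TM, T\sigma, T\zeta, T(\lambda)c)$, so $P(T(\mb{q})) = TM = T(P(\mb{q}))$, and on a morphism $(f,g) : \mb{q} \to \mb{q}'$ one has $P(T(f,g)) = P(Tf, Tg) = Tg = T(P(f,g))$. Thus $PT = TP$ as endofunctors, and I may take the comparison natural transformation $\alpha : TP \Rightarrow PT$ to be the identity.

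Second, I would check the six coherence diagrams for a strict morphism of tangent categories. Each structural natural transformation on ${\sf DBun}(\X)$ is, by construction, a pair whose base component is the corresponding structural transformation of $\T$ on $\X$: the projection is $(p_E, p_M)$, the zero is $(0_E, 0_M)$, the addition is given fibrewise by $(+_E, +_M)$, the vertical lift is $(\ell_E, \ell_M)$, and the canonical flip is $(c_E, c_M)$. That these are genuine (linear) bundle morphisms in ${\sf DBun}(\X)$ follows from the observations already made in Example \ref{basicdiffbundles}, the Corollary after Lemma \ref{lemmaTDiffBundle}, and the naturality of $p$, $0$, $+$, $\ell$, $c$ in $\X$. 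Applying $P$ to each of these bundle morphisms returns precisely $p_M$, $0_M$, $+_M$, $\ell_M$, $c_M$, so every one of the required coherence squares collapses to a trivial identity. Hence $P$ is a strict morphism of tangent categories.

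For the four cases, ${\sf DBun}(\X)$ and ${\sf DBun}(\X)_{\sf Lin}$ are already noted in the paragraph preceding the statement to be tangent categories (the closure of ${\sf DBun}(\X)_{\sf Lin}$ under $T$ uses that $T(\lambda)c$ is natural in linear morphisms, which follows from the naturality of $c$ and linearity of $T(f)$). For the display variants ${\sf DBun}_{\cal D}(\X)$ and ${\sf DBun}_{\cal D}(\X)_{\sf Lin}$, closure under $T$ follows from the fact that $T$ preserves display maps in a display tangent category, as in Proposition \ref{displayed-bun}. The argument above then applies verbatim in each case, giving the corollary. There is no real obstacle here: the statement is a packaging result, and its content is entirely contained in the componentwise definition of the tangent structure on ${\sf DBun}(\X)$ together with the preservation results already established.
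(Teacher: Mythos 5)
Your proposal is correct and follows essentially the same route as the paper, which derives the corollary directly from the observation that ${\sf DBun}(\X)$, ${\sf DBun}(\X)_{\sf Lin}$ and their display variants are subcategories of $\X^{\mathbf 2}$ closed under the pointwise tangent structure, so that $P$ restricts the strict projection of the pointwise tangent category $\X^{\mathbf 2}$. Your extra explicitness about the coherence squares collapsing and about strictness implying strongness is a faithful unwinding of what the paper leaves implicit.
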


For the display and transverse structure of ${\sf DBun}(\X)$ we have to be a little more careful because it is not the case that a pullback of differential bundles in the bundle category ${\sf bun}_{\cal D}$ will again be a differential bundle (however, as noted above, it will certainly be a display map).  When we restrict to linear morphisms, however, it is 
not hard to check that the pullback \emph{will} be a differential bundle due to the compatibility linear maps have with the lifts of the differential bundles.   Thus, in defining the transverse and
display system for ${\sf DBun}_{\cal D}(\X)$, we must restrict the transverse and display systems of ${\sf bun}_{\cal D}(\X)$ to lie within the linear maps.  With this caveat we have:

\begin{corollary} \label{displayed-bundles}
For any display tangent category, $\X$,  the categories of differential bundles ${\sf DBun}_{\cal D}(\X)$ and ${\sf DBun}_{\cal D}(\X)_{\sf Lin}$, with the transverse and display system indicated above, are  display tangent categories.  

Furthermore, $P: {\sf DBun}_{\cal D}(\X) \to \X$ and $P:{\sf DBun}_{\cal D}(\X)_{\sf Lin} \to \X$  are tangent fibrations and, moreover, each fibre has finite products given by the Whitney sum.
\end{corollary}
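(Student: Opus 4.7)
The plan is to dispatch the three claims (display tangent structure, fibration property, and Whitney sums in fibres) in turn, leveraging the preceding corollary that gives the tangent category structure and proposition~\ref{preservationOfDiffBundles} (preservation of differential bundles and linear morphisms by strong tangent morphisms) together with lemma~\ref{pullbackDiffBundle} (pullbacks of differential bundles along arbitrary maps).

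First I would establish that ${\sf DBun}_{\cal D}(\X)$ and ${\sf DBun}_{\cal D}(\X)_{\sf Lin}$ are display tangent categories. The tangent structure is already present as a subcategory of $\X^{\mathbf 2}$. The display and transverse systems are defined, following the hint preceding the statement, as the restriction of those of ${\sf bun}_{\cal D}(\X)$ (from proposition~\ref{displayed-bun}) to transverse cubes and display maps whose internal morphisms are linear. The axioms of a display system (closure under pullback and the natural-transformation axiom) transfer to this restriction because pullbacks of linear morphisms of differential bundles remain linear morphisms of differential bundles: when the Cartesian morphism is linear, the pulled-back lift is forced by the universal property of the pullback, as in the proof of lemma~\ref{pullbackDiffBundle}. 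Preservation of the display system by $T$ follows from the corresponding fact for ${\sf bun}_{\cal D}(\X)$ combined with $T$'s preservation of linearity.

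Next I would verify that $P$ and $P_{\sf Lin}$ are tangent fibrations. For the fibration property, given $f: M' \to M$ in $\X$ and a display differential bundle ${\sf q}$ over $M$, lemma~\ref{pullbackDiffBundle} delivers the pullback differential bundle $f^*({\sf q})$ over $M'$ together with a linear, Cartesian morphism $(f^*_E,f): f^*({\sf q})\to{\sf q}$, which furnishes Cartesian lifts for both $P$ and $P_{\sf Lin}$. Strictness of the tangent morphism is automatic since $P$ simply reads off the base of a bundle and $T$ acts pointwise. For $(T,T)$ to be a morphism of fibrations we must check that $T$ sends Cartesian arrows to Cartesian arrows: since $q$ is a display map and $T$ preserves display maps, $T$ preserves the defining pullback of $f^*({\sf q})$, so $T(f^*({\sf q}))$ is canonically isomorphic to $(Tf)^*(T{\sf q})$ and the image of the Cartesian arrow remains Cartesian.

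Finally, for the fibre products, given display differential bundles ${\sf q}_1,{\sf q}_2$ over $M$, the pullback $E_1\times_M E_2$ exists (as $q_2$ is a display map) and is transverse, hence preserved by every $T^n$. I would define ${\sf q}_1\oplus{\sf q}_2$ with total space $E_1\times_M E_2$, projection $\pi_1 q_1=\pi_2 q_2$, unit $\langle\zeta_1,\zeta_2\rangle$, addition computed componentwise, and lift $\lambda_\oplus = \langle \pi_1\lambda_1,\pi_2\lambda_2\rangle$ landing in $T(E_1\times_M E_2)\cong TE_1\times_{TM} TE_2$. Each differential bundle axiom for ${\sf q}_1\oplus{\sf q}_2$ then decomposes, under the pullback identifications, into the corresponding pair of axioms for ${\sf q}_1$ and ${\sf q}_2$. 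The two projections to ${\sf q}_i$ are linear by construction, and the universal property of the pullback furnishes the universal property of a product in the fibre; together with the trivial bundle ${\sf 1}_M$ of example~\ref{basicdiffbundles} as terminal object, this gives finite products.

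The main obstacle is the universality of the lift for ${\sf q}_1\oplus{\sf q}_2$. The crux is identifying $(E_1\times_M E_2)_2$ with $(E_1)_2\times_M (E_2)_2$ and $T(E_1\times_M E_2)$ with $TE_1\times_{TM}TE_2$ (both using that $T^n$ preserves the defining pullback), whereupon the universality square for ${\sf q}_1\oplus{\sf q}_2$ factors as a pullback of the individual universality squares for ${\sf q}_1$ and ${\sf q}_2$ over the trivial square for $M$; this is a pullback by pasting, and is preserved by each $T^n$ for the same reason.
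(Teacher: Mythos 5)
Your proposal is correct and follows essentially the same route as the paper: the tangent structure comes from ${\sf DBun}_{\cal D}(\X)$ being a subcategory of $\X^{\mathbf 2}$ closed under $T$ (via proposition \ref{preservationOfDiffBundles}), the transverse and display systems are the restriction of those of ${\sf bun}_{\cal D}(\X)$ to linear morphisms (using that pullbacks along linear morphisms of differential bundles are again differential bundles), Cartesian lifts are supplied by lemma \ref{pullbackDiffBundle}, and the Whitney sum is the fibre product with lift $\langle \pi_0\lambda, \pi_1\lambda'\rangle$ into $T(E\times_M E')\cong TE\times_{TM}TE'$ — exactly the paper's $\lambda_2$. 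The paper states all of this more tersely in the discussion surrounding the corollary, but the ingredients and their roles are the same.
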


We provide an alternative view of the Whitney sum (see the remarks at the end of section \ref{secConstDiffBundles}) of two differential bundles $q: E \to M$ and $q': E' \to M$ with the same base whose projections are display maps.  First observe that  $E \x_M E' \to^{\pi_0q} M$ certainly has all pullback powers along itself, which are always transverse, as 
this is a display map. Furthermore, being the product of $E \to^q M$ and $E' \to^{q'} M$ in the slice makes this  immediately an additive bundle. 
To see that it is a differential bundle we exhibit a lift map $\lambda_2$ as follows:
$$\xymatrix{ & E \x_M E' \ar[ddl]_{\pi_0} \ar[dr]_{\pi_1} \ar@{..>}[rr]^{\lambda_2} & & T(E \x_M E') \ar[ddl]_<<<<<<<<{T(\pi_0)}|{\hole} \ar[dr]^{T(\pi_1)} \\
                     & & E' \ar[ddl]^<<<<<<{q'} \ar[rr]_{\lambda'} & & T(E') \ar[ddl]^{T(q')} \\
                     E \ar[dr]_{q} \ar[rr]^{\lambda}|>>>>>>>\hole & & T(E) \ar[dr]_{T(q)} \\
                     & M \ar[rr]_{0}  & &T(M)}$$
It is then clear that $\lambda_2$ is a lift map and will have the required additive bundle morphism properties.

\begin{definition}
A tangent fibration is {\bf differential} in case all its fibres with their vertical tangent structure have coherent differential structure (see definition \ref{defnCoherentDiffStructure}).  
\end{definition}
The simple fibration over a Cartesian differential category (see \ref{tanFibrationExamples}.i) is clearly an example of a differential tangent fibration.  In the final section of this paper we will show that the tangent fibration of display differential bundles is also a differential tangent fibration.


\subsection{The tangent fibration of display differential bundles}\label{diff-fibrations}

A rather unsatisfactory aspect of the proof that the display differential bundles form a tangent fibration,  
$P: {\sf DBun}_{\cal D}(\X) \to \X$ (Corollary \ref{displayed-bundles}), is that it did not provide a concrete description of the vertical tangent structure in a fibre.   In this section we examine 
the local (vertical) tangent structure in each fibre, ${\sf DBun}_{\cal D}(\X)[M]$, and we will  show that $P: {\sf DBun}_{\cal D}(\X) \to \X$ is a differential tangent fibration.  

Let us start by considering the larger fibration $P: {\sf bun}_{\cal D}(\X) \to X$ described in Proposition \ref{displayed-bun}.  The tangent bundle of the differential bundle ${\sf q}$ is given by $0^{*}({\sf q})$.  This means the projection of the bundle is given by the pullback:
$$\xymatrix{T_M(E) \ar[d]^{p_{\sf q}} \ar[rr]^{0^{*}_{T(E)}} && T(E) \ar[d]^{T(q)} \\
                   M \ar[rr]_0 && T(M) }$$

The vertical lift for this tangent structure is defined as the dotted arrow in:
$$\xymatrix{T_M(E) \ar@{..>}[dr]^{\ell_M} \ar[ddr]_{p_{\sf q}} \ar[rrr]^{0^{*}_{T(E)}} & & & T(E)\ar[rrd]^{\ell} \ar@/_2pc/[ddd]|<<<<<<<<<\hole|>>>>>>>>>>\hole_{T(q)} \\
          & T_M^2(E) \ar[d]^{T_M(p_{\sf q})p_{\sf q}} \ar[rr]^{} & & T(T_ME) \ar[d]^{T(p_{q})T(q)} \ar[rr]^{T(0^*_{TE})} && T^2(E) \ar[d]^{T^2(q)} \\ 
          & M \ar[rr]_0 \ar[rrd]_0 & & T(M) \ar[rr]_{T(0)} & & T^2(M) \\
          & & & T(M) \ar[rru]_{\ell} }$$

The canonical flip is defined as the dotted arrow in:
$$\xymatrix{T_M^2(E)) \ar@{..>}[dr]^{c_M} \ar[ddr]_{T_M(p_{\sf q})p_{\sf q}} \ar[rrrr]^{0^*_{TE}T(0^*_{TE})} & & & & T^2(E) \ar[dl]^c \ar@/^1pc/[ddl]^{T^2(q)c} \\
           & T_M^2(E) \ar[d]^{T_M(p_{\sf q})p_{\sf q}} \ar[rr]^{0T(0))^{*}_{T_2(E)}} & & T^2(E) \ar[d]^{T^2(q)} \\
           & M \ar[rr]_{0T(0)} & & T^2(M)}$$

A slightly surprising observation is that each fibre is actually a {\em Cartesian\/} tangent category even when the original display tangent category 
$\X$ is not:

\begin{lemma} 
For any display tangent category, $\X$, each fibre of the bundle fibration,  ${\sf bun}_{\cal D}(\X)[M]$, is a Cartesian tangent category.
\end{lemma}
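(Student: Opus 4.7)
My plan is to verify the two halves of ``Cartesian tangent category'' in the fibre ${\sf bun}_{\cal D}(\X)[M]$: finite products exist in the fibre, and the vertical tangent functor $T_M$ preserves them.

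For the existence of finite products: the terminal object is $1_M \colon M \to M$, which lies in $\mathcal{D}$ since isomorphisms are display maps. The binary product of display maps $q\colon E \to M$ and $q'\colon E'\to M$ is the pullback $\pi_0 q\colon E \times_M E' \to M$; this pullback exists and is a transverse because $q$ and $q'$ are $\mathcal{Q}$-quarrable, and the resulting projection to $M$ is itself a display map by closure of $\mathcal{D}$ under pullback (followed by composition with $q$). The universal property of the slice product is inherited by the fibre, so these really are products in ${\sf bun}_{\cal D}(\X)[M]$.

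The heart of the argument is preservation of products by $T_M$. Preservation of the terminal is immediate: $T_M(1_M) = 0_M^*(T(1_M)) = 0_M^*(1_{T(M)}) \cong 1_M$. For binary products, I would factor $T_M$ as the composite $T_M = 0_M^* \circ T^{\sharp}$, where $T^{\sharp}$ sends a bundle $q\colon E \to M$ to $T(q)\colon T(E) \to T(M)$. Because $T$ is a transverse functor on the display tangent category $\X$, it preserves the pullback $E \times_M E'\to M$, so $T(E \times_M E') \cong T(E) \times_{T(M)} T(E')$ as bundles over $T(M)$. Applying $0_M^*$ and using the standard iterated-pullback identity
\[
M \times_{T(M)} \bigl( T(E) \times_{T(M)} T(E') \bigr) \;\cong\; \bigl( M \times_{T(M)} T(E) \bigr) \times_M \bigl( M \times_{T(M)} T(E') \bigr),
\]
we conclude $T_M(q \times_M q') \cong T_M(q) \times_M T_M(q')$.

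The main (minor) obstacle is just the limit bookkeeping: one must verify that every pullback in sight is transverse (so preserved by $T$), and that the canonical comparison between the two iterated pullbacks gives the correct product in the fibre over $M$. Both are direct consequences of the transverse and display system axioms together with the fact that pullback functors between slices preserve whatever limits exist in their domain. Once the finite products in the fibre have been explicitly identified, the argument is essentially formal.
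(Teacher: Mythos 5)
Your proof is correct and follows essentially the same route as the paper: identify $1_M$ as the terminal object, obtain binary products as pullbacks over $M$ (which exist since display maps are quarrable), and observe that $T_M = 0_M^*\circ T$ preserves them because $T$ preserves the transverse pullback $E\times_M E'$ and pulling back along $0_M$ commutes with fibre products, together with the direct check that $T_M(1_M)\cong 1_M$. One small repair: the assertion that $E\times_M E'\to M$ is itself a display map cannot be justified by ``closure under pullback followed by composition,'' since closure of $\mathcal{D}$ under composition is not among the axioms; the correct justification is the third condition of Definition \ref{display-system} (applied to the natural transformation from the pullback square of $q$ and $q'$ to the constant transverse at $M$), which is exactly the axiom the paper introduces so that pullbacks of display maps, as objects of the arrow category, are again display maps.
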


\proof
To see this we need to have finite products in the fibre which are preserved by $T_M$.  The final object in the fibre is ${\sf 1}_M$ and each ${\sf q}$ has 
a unique linear morphism $(q,1_M): {\sf q} \to {\sf 1}_M$ and thus the product in the fibre is given by pulling back these morphisms (these
pullbacks always exist as $q$ is a display map).  But $T$ also preserves all these pullbacks as does the substitution functor (as it is given by pulling back).  
Thus, it suffices to verify that $T_M$ preserves the final object.  However, $T_M({\sf 1}_M)$ is given by the pullback:
$$\xymatrix{T_M(M) = M \ar@{=}[d] \ar[rr]^0 & & T(M) \ar[d]^{T(1_M)} \\ M \ar[rr]_{0} & & T(M)}$$ 
and so $T_M$ does indeed preserve the final object.  As the substitution functors are given by pulling back it is now immediate that they preserve 
these products. 
\endproof

We next observe that, for display tangent categories, a differential bundle over $M$ is the same as a differential object in the fibre over $M$ in the 
bundle fibration:

\begin{proposition}\label{displayedDiffBundleEquivalence}
For a display tangent category $\X$ the following are equivalent:
\begin{enumerate}[(i)]
\item A display differential bundle in $\X$ over $M$.
\item A differential bundle over the final object in ${\sf bun}_{\cal D}(\X)[M]$.
\item A differential object in  ${\sf bun}_{\cal D}(\X)[M]$.
\end{enumerate}
\end{proposition}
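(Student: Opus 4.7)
The plan is to treat the two equivalences separately. The equivalence $(ii)\Leftrightarrow(iii)$ falls out immediately from Proposition \ref{diffObjeqdiffBun}, since the lemma preceding this statement establishes that ${\sf bun}_{\cal D}(\X)[M]$ is a Cartesian tangent category. So the real content of the proposition is the bijection between display differential bundles in $\X$ over $M$ and differential bundles over the terminal object ${\sf 1}_M$ in the fibre ${\sf bun}_{\cal D}(\X)[M]$.

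For $(i)\Rightarrow(ii)$, I would start from a display differential bundle ${\sf q} = (q,\sigma,\zeta,\lambda)$ and observe that its underlying additive bundle structure already makes $q$ an additive bundle over ${\sf 1}_M$ in the fibre, since additive structure is fibrewise. The lift in the fibre should be $\tilde\lambda: E \to T_M(E)$ obtained by factoring $\lambda$ through the pullback $T_M(E) = 0^*(T(E))$: because $(\lambda,0): q \to T(q)$ is an additive bundle morphism we have $\lambda T(q) = q\cdot 0$, giving a unique $\tilde\lambda = \langle q,\lambda\rangle$ into the pullback. Conversely, for $(ii)\Rightarrow(i)$ I would recover $\lambda$ from $\tilde\lambda$ by post-composing with the Cartesian arrow $0^*_{T(E)}: T_M(E) \to T(E)$. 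These assignments are mutually inverse by the universal property of the pullback $T_M(E)$, so the passage between ${\sf q}$ and $(q,\sigma,\zeta,\tilde\lambda)$ is a bijection at the level of underlying data.

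It remains to check that the differential-bundle axioms transport faithfully across this bijection. The additive-bundle morphism conditions for $(\tilde\lambda, 0_{\sf 1_M})$ and $(\tilde\lambda, \zeta)$ follow by post-composing with the Cartesian map $0^*_{T(E)}$ (which is monic on maps into the fibre, by uniqueness into the pullback) and reducing to the already-established conditions for $(\lambda,0)$ and $(\lambda,\zeta)$ in $\X$; the coherence of the structural transformations of the vertical tangent structure under substitution (as established in the proof of Theorem \ref{tangent-substitution}) is what makes this argument go through. The lift-coherence equation $\tilde\lambda \ell_M = \tilde\lambda T_M(\tilde\lambda)$ reduces to $\lambda\ell = \lambda T(\lambda)$ by the same device, using that $\ell_M$ is defined as the unique factorization compatible with $\ell$.

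The main obstacle is the universality of the lift in the fibre. The pullback square defining universality of $\tilde\mu$ sits inside the fibre ${\sf bun}_{\cal D}(\X)[M]$, yet I have only the universality of $\mu$ available in $\X$, which is a transverse pullback. The argument I would give is that $\tilde\mu$ can be built by the same universal factorization through $T_M$-style pullbacks, so its defining square over ${\sf 1}_M$ decomposes as a composite of two pullback rectangles: one expressing the universality of $\mu$ in $\X$, and one expressing the Cartesian pullback defining $T_M(E)$ over $0$. Since both are transverse (the first by being a display differential bundle, the second by the definition of substituted transverses in a tangent fibration), the composite is a transverse pullback in $\X$, and by pulling the result back along the identity at $M$ we obtain the required pullback in the fibre. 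This is the step where the display hypothesis on $q$, together with closure of the transverse system under substitution, is essential.
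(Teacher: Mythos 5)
Your proposal is correct and follows essentially the same route as the paper: dispatch $(ii)\Leftrightarrow(iii)$ via Proposition \ref{diffObjeqdiffBun}, then establish $(i)\Leftrightarrow(ii)$ by factoring $\lambda$ through the pullback $T_M(E)=0^{*}(T(E))$ to obtain the fibrewise lift, recovering $\lambda$ by post-composition with $0^{*}_{T(E)}$. The paper leaves the axiom verification as "straightforward"; your elaboration of it (in particular the pullback-pasting argument for universality of the lift) is a sound filling-in of exactly that step.
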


\begin{proof}
The equivalence between differential objects and differential bundles over the final object for Cartesian tangent categories was established in Proposition \ref{diffObjeqdiffBun}.  Thus it remains only to prove the equivalence of {\em (i)} and {\em (ii)}. 

Given a differential bundle $\mb{q}$ over $M$, we define a differential bundle over $1: M \to M$ in ${\sf bun}_{\cal D}(\X)[M]$ with lift $\lambda'$ given by
\[ \xymatrix{E \ar@/_1pc/[ddr]_{q} \ar@{..>}[dr]^{\lambda'} \ar@/^1pc/[drrr]^{\lambda} \\
             & T_M(E) \ar[d]_{p_{\mb{q}}} \ar[rr]^{0^*} & & TE \ar[d]^{T(q)} \\
             & M \ar[rr]_{0} & & TM } \]
Its projection is simply $q$ (now viewed as a map to $1: M \to M$), and the addition and zero are defined as for $E$.  Conversely, given a $\lambda': E \to T_M(E)$, we define a $\lambda: E \to TE$ by $\lambda = \lambda' 0^*$.  Checking that all axioms are satisfied is now straightforward.   
\end{proof}

\begin{remark}\label{remarkPullbackDiffBundles} {\em For display tangent categories, we can now exhibit a more conceptual proof of the fact that pullbacks of display differential bundles are differential bundles (see Lemma \ref{pullbackDiffBundle}).  By the above, $\mb{q}$ is a display differential bundle if and only if it is a differential bundle over $1$ in ${\sf bun}_{\cal D}(\X)[M](\mb{X})$.  By Theorem \ref{tangent-substitution}, for any $f:  N \to M$, $f^{*}: {\sf bun}_{\cal D}[M] \to {\sf bun}_{\cal D}[N]$ is a strong tangent functor which is easily seen to preserve products.  By proposition \ref{preservationOfDiffBundles}, strong functors carry differential bundles to differential bundles; thus $f^{*}(\mb{q})$ is a differential bundle over $1$ in ${\sf bun}_{\cal D}[N]$ and so $f^{*}(\mb{q})$ is a differential bundle over $N$. }
\end{remark}

Clearly we have ${\sf DBun}_{\cal D}(\X)_{\sf Lin} \subseteq {\sf DBun}_{\cal D}(\X) \to {\sf bun}_{\cal D}(\X)$, and their canonical functors to $\X$ make these morphisms of fibrations.   This means that the objects of ${\sf DBun}_{\cal D}(\X)[M]$ may be viewed as differential objects (or differential bundles over $1$) in ${\sf bun}_{\cal D}(\X)$.  This means that each object 
in  ${\sf DBun}_{\cal D}(\X)[M]$ has a natural assignment of differential structure which we now show is coherent.  

\begin{theorem} \label{display-is-diff-fib}
For any display tangent category $\X$ the fibration 
$$P: {\sf DBun}_{\cal D}(\X) \to \X$$ is a differential tangent fibration.
\end{theorem}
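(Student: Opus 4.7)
The plan is to show that each fibre ${\sf DBun}_{\cal D}(\X)[M]$, equipped with its vertical tangent structure and Whitney-sum products, satisfies Definition \ref{defnCoherentDiffStructure}. Since the fibre is Cartesian by the preceding lemma, the task reduces to assigning a canonical differential structure to every object and then verifying [CDS.1] and [CDS.2]. Each display differential bundle $\mb{q} = (q,\sigma,\zeta,\lambda)$ over $M$ has a natural candidate: its terminal morphism is $(q,1_M): \mb{q} \to \mb{1}_M$, its monoid structure is $(\zeta,1_M)$ and $(\sigma,1_M)$ (which are linear because $(\lambda,0)$ is an additive bundle morphism), and the lift in the fibre $\lambda'_{\mb{q}}: \mb{q} \to T_M(\mb{q})$ is the unique factorisation of $\lambda: E \to T(E)$ through the pullback defining $T_M(E)$ (linear by the axiom $\lambda\ell = \lambda T(\lambda)$). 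By the argument of Proposition \ref{displayedDiffBundleEquivalence} applied inside the Cartesian tangent category ${\sf DBun}_{\cal D}(\X)[M]$, this makes $\mb{q}$ a differential bundle over $\mb{1}_M$ in the fibre, hence a differential object.

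To verify [CDS.1], I would observe that the Whitney sum $\mb{q}_1 \times_M \mb{q}_2$ is the fibrewise product and is itself a display differential bundle by Remark \ref{remarkPullbackDiffBundles}. Both the Whitney-sum pullback and the pullback defining $T_M$ are computed pointwise in $\X$, so the canonical zero, addition, and lift of $\mb{q}_1 \times_M \mb{q}_2$ coincide componentwise with those of the product of canonical structures, yielding the equations of [CDS.1]. For [CDS.2], Lemma \ref{lemmaTDiffBundle} together with the pullback stability of display differential bundles identifies $T_M(\mb{q}) = 0^{*}(T(\mb{q}))$ as a display differential bundle whose zero, addition, and lift are obtained by pulling back $T(\zeta)$, $T(\sigma)$, and $T(\lambda)c$ along $0: M \to T(M)$. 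Re-expressing these through the pullback definitions of $m^{T_M}_1$, $m^{T_M}_\times$, $c_M$, and $\ell_M$ given at the start of this subsection yields the equations demanded by [CDS.2].

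The main obstacle is the lift equation $\lambda_{T_M(\mb{q})} = T_M(\lambda'_{\mb{q}}) c_M$ of [CDS.2]. Both sides are maps into the iterated pullback $T_M^2(E)$, and checking their equality reduces to a diagram chase: the left-hand side factors $T(\lambda)c$ through the double pullback, while the right-hand side factors $T(\lambda\ell)cT(c)$ through the same. The coherences $\lambda\ell = \lambda T(\lambda)$ of the differential bundle and $T(\ell)cT(c) = c\ell$ of tangent structure, used already in Lemma \ref{lemmaTDiffBundle}, together with the universal property of the defining pullbacks force the two factorisations to agree. Once [CDS.1] and [CDS.2] are in place the fibre has coherent differential structure, so $P$ is a differential tangent fibration.
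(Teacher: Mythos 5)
Your proposal follows the same route as the paper's proof: the canonical differential structure on each object of ${\sf DBun}_{\cal D}(\X)[M]$ comes from Proposition \ref{displayedDiffBundleEquivalence}, {\bf [CDS.1]} is read off from the Whitney-sum definition of the fibrewise product, and {\bf [CDS.2]} is read off from the identification $T_M({\sf q}) = 0^{*}(T({\sf q}))$ with $T({\sf q}) = (T(q),T(\sigma),T(\zeta),T(\lambda)c)$. Your treatment is in fact somewhat more explicit than the paper's (which simply asserts that the substituted bundle ``has the form required''), particularly in isolating and sketching the verification of the lift equation via the coherences $\lambda\ell = \lambda T(\lambda)$ and $T(\ell)cT(c) = c\ell$; the argument is correct.
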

\proof We are required to show that the category ${\sf DBun}_{\cal D}(\X)[M]$ has coherent differential structure.  As noted above, every object in ${\sf DBun}_{\cal D}(\X)[M]$ is a differential bundle over $M$ and thus is a differential bundle over the final object in this category.  In other words, each object has a natural structure as a differential bundle over the final object.   It remains to show that this choice of bundle structure is coherent; that is, we must show {\bf [CDS.1]} and {\bf [CDS.2]} hold.  

Recall that the product in the fibre is the Whitney sum.   An inspection of the definition of the Whitney sum immediately reveals that it is defined using the requirements of {\bf [CDS.1]}!  Thus,   
{\bf [CDS.1]} is satisfied by definition.

For  {\bf [CDS.2]} note that the local vertical tangent bundle of ${\sf q} = (q: E \to M,\sigma,\xi.\lambda)$ is obtained by substitution along $0: M \to T(M)$ of the differential bundle: 
$$T({\sf q}) = (T(q): T(E) \to T(M),T(\sigma),T(\xi),T(\lambda)c).$$
The substituted bundle $0^*(T({\sf q}))$ then has the form  required by {\bf [CDS.2]}.
\endproof

\newpage

\end{document}